\newtheorem{lemma}{Lemma}[section]
\newtheorem{theorem}[lemma]{Theorem}
\newtheorem{corollary}[lemma]{Corollary}
\newtheorem{proposition}[lemma]{Proposition}
\theoremstyle{definition}
\newtheorem{remark}[lemma]{Remark}
\newtheorem{definition}{Definition}
\def\supp{{\mathrm{supp}}}
\newcommand{\SL}{\mathrm{SL}(m,\mathbb{R})}
\newcommand{\Var}{\mathrm{Var}}
\begin{document}

%
%


\title[Products of non-stationary Markov-dependent matrices]
{Exponential growth of products of non-stationary Markov-dependent matrices}
\author{I. Goldsheid}

\address{Ilya Goldsheid: School of Mathematical Sciences\\
Queen Mary University of London\\
London E1 4NS \\
 Great Britain\\
email: I.Goldsheid@qmul.ac.uk}

\keywords{products of Markov-dependent matrices, exponential growth, Lyapunov exponents}
\subjclass[2010]{Primary: 60B15; Secondary: 60J05}

\begin{abstract}
Let $(\xi_j)_{j\ge1} $, be a non-stationary Markov chain with phase space $X$ and let
$\mathfrak{g}_j:\,X\mapsto\SL$ be a sequence of functions on $X$ with values in the unimodular group.
Set $g_j=\mathfrak{g}_j(\xi_j)$ and denote by $S_n=g_n\ldots g_1$, the product of the matrices $g_j$.
 We provide sufficient conditions for exponential growth of the norm $\|S_n\|$ when the Markov chain is not supposed
to be stationary. This generalizes the classical theorem of Furstenberg on the exponential growth of products of independent identically
distributed matrices as well as its extension by Virtser to products of stationary Markov-dependent matrices.

\end{abstract}

\maketitle

\section{Introduction}
Let $(g_n)_{n\ge1}$ be a sequence of matrices, $g_n\in \mathrm{SL}(m,\mathbb{R})$, and set
\[
S_n=g_n...g_1.
\]
 
In the seminal 1963 paper \cite{F}, H. Furstenberg proved the following theorem. (All relevant definitions can be
found in section \ref{SecMatrices}.)
\begin{theorem}[Furstenberg, \cite{F}]\label{ThmF}
Suppose that:\newline
$\mathrm{(a)}$ $(g_n)_{n\ge1}$ is a sequence of independent identically distributed 
i.i.d. random matrices 
satisfying $\int_{\SL}\log\| g\|\;d\nu(g)<\infty$. \newline
$\mathrm{(b)}$ The group $\mathbb{G}_\nu$ generated by the support of $\nu$ does not preserve any probability measure on the unit sphere
$\mathcal{S}$ in $\mathbb{R}^m$.

Then the following limit (called the top Lyapunov exponent of the product $S_n$) exists with probability 1 and is strictly positive:
\begin{equation}\label{M}
\lim_{n\to\infty} \frac{1}n \ln\|S_n\|=\lambda>0.
\end{equation}
\end{theorem}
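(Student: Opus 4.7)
The plan is to separate existence of the limit from its positivity. For existence I would apply Kingman's subadditive ergodic theorem to the process $f_n(\omega)=\log\|S_n(\omega)\|$ on the Bernoulli space $(\SL^{\mathbb{N}},\nu^{\otimes\mathbb{N}})$ with the shift $T$. Submultiplicativity of the operator norm gives $f_{n+k}\le f_n+f_k\circ T^n$; hypothesis (a) places $f_1$ in $L^1$; and the Bernoulli shift is ergodic. Kingman's theorem then yields $\frac{1}{n}\log\|S_n\|\to\lambda$ almost surely for some constant $\lambda\in[-\infty,\infty)$, and the crude bound $\bigl|\log\|S_n\|\bigr|\le\sum_{j\le n}\bigl|\log\|g_j\|\bigr|$ together with (a) keeps $\lambda$ finite.

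For positivity, the strategy is to transfer the dynamics to the projective space $\mathrm{P}^{m-1}$. By weak-$*$ compactness of probability measures on the compact space $\mathrm{P}^{m-1}$, a Markov--Kakutani averaging argument yields a $\nu$-stationary probability measure $\mu$, i.e.\ $\int\!\!\int f(g\bar x)\,d\nu(g)\,d\mu(\bar x)=\int f\,d\mu$ for every $f\in C(\mathrm{P}^{m-1})$. Applying the subadditive/Birkhoff machinery to the additive cocycle $\sigma(g,\bar x)=\log(\|gx\|/\|x\|)$ on $\SL\times \mathrm{P}^{m-1}$ and using stationarity of $\mu$ then produces the Furstenberg integral formula
\[
\lambda \;=\; \int_{\SL}\int_{\mathrm{P}^{m-1}} \log\frac{\|g\bar x\|}{\|\bar x\|}\,d\mu(\bar x)\,d\nu(g).
\]

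The crux is to prove $\lambda>0$ by contradiction. Assuming $\lambda=0$, for a unit vector $v$ I would study $X_n=\log\|S_n v\|$: its increments $\log(\|g_{n+1}S_n v\|/\|S_n v\|)$ are integrable, and stationarity of $\mu$ together with $\lambda=0$ makes the conditional expected increment vanish, so $X_n$ is essentially a zero-mean martingale. The martingale convergence theorem, combined with strict concavity of $\log$ applied on each fibre of $\mathcal{S}\to\mathrm{P}^{m-1}$, would force $\mu$ to be extremal enough to admit a lift through this projection to a $\mathbb{G}_\nu$-invariant probability measure on $\mathcal{S}$. This contradicts hypothesis (b), whence $\lambda>0$.

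The main obstacle is precisely this last lifting step: extracting from the hypothesis $\lambda=0$ a $\mathbb{G}_\nu$-invariant measure on the sphere, rather than merely a stationary measure on the projective space. Existence of $\lambda$ is essentially mechanical, and Furstenberg's formula follows by routine cocycle manipulation; but the passage from $\lambda=0$ to invariance on $\mathcal{S}$ is where the non-abelian structure of $\SL$ really enters, via Jensen-type rigidity of the martingale $X_n$, and constitutes the genuine content of the theorem.
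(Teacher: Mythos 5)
Your existence step is fine: submultiplicativity plus Kingman on the Bernoulli shift gives the a.s.\ limit, and since $\det S_n=1$ forces $\|S_n\|\ge1$, the limit automatically lies in $[0,\infty)$. The gap is in the positivity argument, exactly where you locate ``the genuine content''. First, $X_n=\log\|S_nv\|$ is not a zero-mean martingale under the hypothesis $\lambda=0$: its conditional increment given the past is $\phi(S_n.v)$ with $\phi(u)=\int\log\|gu\|\,d\nu(g)$, and stationarity of $\mu$ only gives $\int\phi\,d\mu=0$ on average (in fact, for an arbitrary $\nu$-stationary $\mu$ one only gets $\int\phi\,d\mu\le\lambda$, with equality for a suitably chosen $\mu$), not $\phi\equiv0$ along the trajectory; so martingale convergence cannot be applied to $X_n$. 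Second, the step you call ``Jensen-type rigidity'' --- passing from a $\nu$-stationary measure on $\mathrm{P}^{m-1}$ with $\lambda=0$ to a measure invariant under every element of $\mathbb{G}_\nu$ --- is precisely the heart of Furstenberg's proof, and it is not supplied by strict concavity of $\log$ or by the fibration $\mathcal{S}\to\mathrm{P}^{m-1}$: the lift from projective space to the sphere is trivial (symmetrize over antipodal points), whereas upgrading stationarity to invariance requires a genuine argument, classically via the measure-valued martingale $g_1g_2\cdots g_n\mu$ (note the order of composition), whose a.s.\ limits must be shown --- using $\lambda=0$, hence all exponents zero and subexponential growth of $\|S_n^{\pm1}\|$ --- to coincide with $\mu$, after which $g\mu=\mu$ for $\nu$-a.e.\ $g$ and hence for all of $\mathbb{G}_\nu$. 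As written, your proposal names this difficulty but does not close it.

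For comparison, the paper does not reprove the theorem along these lines at all: it treats the existence of the limit in \eqref{M} as classical (Furstenberg--Kesten) and obtains strict positivity as a corollary of its operator-theoretic machinery --- Lemma \ref{Thm2} gives $\|W_\nu\|\le\beta<1$ for the averaged operators on $L_2(\mathcal{S})$, Theorem \ref{ThmIndep} converts this into $\mathbb{E}(\|S_n\|^{-m/2})\le Ae^{-\mathfrak{a}n}$ plus Borel--Cantelli, and the passage from the smaller group $G_\nu$ to $\mathbb{G}_\nu$ is handled by replacing $\nu$ with $\tilde\nu=\frac12\nu+\frac12\delta_{I_m}$, for which $G_{\tilde\nu}=\mathbb{G}_{\tilde\nu}$ and $\lambda=2\tilde\lambda$ (Comment 3 after the proof of Lemma \ref{Thm2}). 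If you wish to keep the classical route, you must import or prove the stationary-to-invariant rigidity step; otherwise the paper's route avoids stationary measures on $\mathrm{P}^{m-1}$ altogether.
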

The existence of the limit in \eqref{M} was first proved by Furstenberg and Kesten in \cite{FK} for a 
stationary sequence $(g_n)_{n\ge1}$. 
The main statement of Theorem \ref{ThmF} is the strict positivity of $\lambda$.

In the late 1970s A. Virtser \cite{V} 
extended this result to products
of stationary Markov-dependent sequences of matrices by which we mean that $g_n=\mathfrak{g}(\xi_n)$, where
$\xi=(\xi_n)_{n\ge1} $ is a stationary Markov chain with a phase space $X$ and
$\mathfrak{g}:\,X\mapsto\SL$ is a `good' function on $X$. 
The Markov chain $\xi$ in \cite{V} is supposed
to satisfy the condition $\|K^0\|<1$, where $K^0$ is the restriction of the transition operator of the chain to
the subspace of functions on $X$ orthogonal to constants 
(the exact statement of Virtser's theorem is given in section \ref{SecExample3}). The other conditions in \cite{V} are
as in Furstenberg's theorem.

The paper \cite{V} was followed by a number of works of which we mention Royer \cite{R}, Guivarc'h \cite{Gu}, Ledrappier \cite{L} where the stationarity 
of the chain still plays a crucial role.

The main goal of this work is to extend Virtser's result (and thus also Furstenberg's result) to the product of
Markov-dependent matrices in the case when the underlying Markov chain is not supposed to be stationary.
Obviously, in this case the limit in \eqref{M} may not exist. What we shall show is that there is $\lambda>0$
such that with probability 1 $\liminf_{n\to\infty} \frac{1}n \ln\|S_n\|\ge\lambda$. This will be done under 
very mild conditions. Namely:

\begin{itemize}
\item We remove completely the requirement for the Markov chain  to be stationary. In contrast,
stationarity is crucial for the technique used in majority of previous work.

\item The functions defining the matrices $g_n$ may be time-dependent: $g_n=\mathfrak{g}_n(\xi_n)$,
where $\mathfrak{g}_n:\,X\mapsto\SL$. Moreover, they may themselves be random.

\item No moment condition is imposed on the distributions of matrices (and thus the case $\lambda=+\infty$
is not excluded).

\item The conditions on the transition operators and the supports of the distributions of $g_n$ are supposed to 
be satisfied only for a subsequence of indices of positive density.


\end{itemize}
The price we pay is that the group $\mathbb{G}_\nu$ appearing in assumption (b) of Furstenberg's 
theorem has to be replaced by a smaller group $G_\nu$ generated by  all products $g\tilde{g}^{-1}$, where $g$ and $\tilde{g}$
are  from the support of $\nu$ (in the non-stationary case, these groups depend on $n$).
This means that in the stationary case our requirement is, formally speaking,
more restrictive than the one in \cite{V}. However, we show in Section \ref{SecExample3} that 
Virtser's theorem can be deduced from our result.

%
%

Several important technical lemmas in the present work and in \cite{V} are similar.
We state these lemmas in the form which is convenient for us and we prove them to make this paper
self-contained.
The main innovation with respect to \cite{V} 
is that we manage to upgrade the estimate on the spectral radius of certain operators to an estimate 
on the norm of a product of two operators of the same kind. 
This upgrade enables all the generalizations described above.



\subsection{Motivation}

First of all, the exponential growth is a fundamental property of products $S_n$ and the task of extending it to wider
classes of products is important in its own right.

Here are several examples of problems whose solutions depend, to a large extent, on the
possibility to control the top Lyapunov exponent of the product $S_n$.
The first of these problems will be addressed in this work. The other two problems will be discussed elsewhere.

1. Given a stationary sequence of matrices $g_j$, consider their perturbations of
the form $a_jg_j$, where $a_j$ is a non-random sequence of matrices,  $a_j\in\mathrm{SL}(m,\mathbb{R})$.
The natural question is: what part of the theory of stationary products can be extended to this non-stationary
case?

In section \ref{SecExample2}, we prove that the exponential growth of the product is preserved for the class of
Markov-dependent matrices (which may not be stationary). In many applications this fact is more important
than the existence of the limit in \eqref{M}. (We remark that, unless $a_n$'s are chosen in some special way,
this limit does not exist.)


2. In the theory of Anderson localization in dimension one, the exponential growth of $S_n$ plays
a crucial role. In particular, it implies the existence
with probability 1 of a random vector $u\in\mathbb{R}^2$ such that $\limsup_{n\to\infty}n^{-1}\ln\|S_nu\|<0$.
Extending this theory to models with non-stationary potential is an important problem
and this work is a step in this direction. In the case of  non-stationary potential with independent
entries a solution to this problem was announced in \cite{GorKl}.

3. One of the central questions in the general theory of products of independent random matrices is the existence of distinct
Lyapunov exponents (see e.g. \cite{Viana}). It turns out that this question can be reduced to the question about the growth of
the top Lyapunov exponent of a product of Markov-dependent matrices.




\subsection{Some history: products of independent non-identically distributed matrices}
There is extensive literature studying different aspects of the theory of Lyapunov exponents for
products of a stationary sequence of matrices.  We refer the interested reader
to relatively recent books \cite{Viana} 
and \cite{BQ} and references therein. 

In contrast, there are few papers dealing with products of non-identically distributed matrices
most of which arise in the context of the spectral theory of random Schr\"odinger operators.
We are aware of the following articles.

Works \cite{DSS}, \cite{Simon}, \cite{KLS} 
deal with matrices arising in the theory of localization for Anderson model in dimension one with a potential 
decaying at infinity. These matrices are of the form
\begin{equation}\label{M1}
g_n=\left( \begin{array}{cc}
 a_nq_n & -1 \\
1 & 0
\end{array}\right),
\end{equation}
where $q_n$ are i.i.d. random variables and the (deterministic) sequence $(a_n\in\mathbb{R})_{n\ge1}$ satisfies
$C_1|n|^{-\alpha}<|a_n|<C_2|n|^{-\alpha}$, where $n\not=0$ and $C_1,\ C_2,\ \alpha$ are some positive constants.
We note that for any $\alpha>0$ the growth of the norm $\|S_n\|$  is at most sub-exponential.
%

The work \cite{W1} by Shubin-Vakilian-Wolff  provides constructive estimates for the norm of an operator
which is the average of a certain representation of $\mathrm{SL}(2,\mathbb{R})$, where the average is computed over
the distribution of the matrices.
This result implies a constructive estimate for the exponential growth of products of
matrices \eqref{M1} with $a_n=1$. 
With a bit of additional work, it is possible to extend this result to the case
of independent non-identically distributed matrices of this form.
Formally speaking, the latter has not been explicitly stated in \cite{W1} but it seems plausible that the authors
were aware of this fact (see comments in \cite[page 943]{W1}).

In \cite{K}, Y. Kifer proved a series of results concerned with different aspects of the theory of products of random matrices
whose distributions form a stationary process. In particular, he proves under certain conditions the strict positivity
of the top Laypunov exponent. The stationarity condition is crucial for the technique used in \cite{K} but is not satisfied
for products of matrices considered in the present paper.

Recently,  A. Gorodetski and V. Kleptsyn announced \cite{GorKl} a proof of exponential growth of a product of
independent non-identically distributed matrices under conditions similar to those stated in Theorem \ref{ThmIndep} of this paper.
For the case of $2\times 2$ matrices, \cite{GorKl} contains additional results on the Law of Large Numbers and
Large Deviations for such products.


\textbf{Acknowledgement.} The author would like to thank A. Sodin for patiently reading several versions of this paper
and for constructive critique and useful suggestions. 

\subsection{Organization of the paper} In section \ref{SecMR} we recall some well known definitions
and introduce the related notation in the form which is best suited for what follows; we then state the main results
(Theorems \ref{Thm1} and \ref{Thm4}) 
and provide some comments on them. The applications of the main results are considered in section \ref{SecExamples};
in particular, example 3 (section \ref{SecExample3}) explains how to deduce Virtser's theorem
from Theorem \ref{Thm1}.
In section \ref{SecIndep} we prove a particular case of
Theorem \ref{Thm1}. Namely, Theorem \ref{ThmIndep} considers the case of independent matrices.
There are several reasons for that. First of all, products of independent matrices form
a very important subclass in the theory of products of random matrices which deserves
a separate consideration. Secondly, this allows us to
explain some of the ideas in the case which is less technical and therefore more transparent
than the general case. Finally, the proof in the Markov-dependent case makes use of Lemma \ref{Thm2} which is
the main technical result needed for the independent case.
In section \ref{SecMarkov} we introduce the technique which allows us to treat the
products of Markov-dependent matrices and prove Theorem \ref{Thm1}. The main parts of the proofs 
in the Markov-dependent and in the independent case differ significantly and this difference 
does not seem to be easily predictable (see Remark \ref{RmkMain}). In section 6, our second main result
(Theorem \ref{Thm4}) is derived from Lemma \ref{LemmaParticCase} which, in turn, is an extension of 
Theorem \ref{Thm1}. Appendix contains two elementary lemmas which we use in the main text of the paper.

\subsection{Some notation and conventions} The following notation is used throughout the paper.

$\mathcal{S}$ is the unit sphere in $\mathbb{R}^m$ and $u\in \mathcal{S}$ is a unit vector.
We write $\int_\mathcal{S}f(u)du$ for the integral over the uniform distribution on $\mathcal{S}$.

$\xi$ and $(\xi_n)_{ n\ge1}$ denote the same Markov chain. A similar convention applies to all Markov
chains which are introduced in the paper, such as $\tilde{\xi}=(\tilde{\xi}_n)_{ n\ge1}$, etc .

$X$ is the phase space of $\xi$. The elements of $X$ are denoted $x$, $y$, $x_i$, $y_j$, etc.

The term measure always means probability measure.

The notation $\|\cdot\|$ is mainly used for the norms of
vectors and matrices; in those cases when it is used for norms of functions or operators, its exact
meaning is always obvious from the context.

If $f$ belongs to a space of functions $\mathbb{H}_n$, we write $\|f\|_{\mathbb{H}_n}$ for the norm of $f$
when it is important to emphasize that $f\in\mathbb{H}_n$ and that the norm is the one with
which $\mathbb{H}_n$ is equipped.

\section{Main results}\label{SecMR}
\subsection{The setup}

\subsubsection{The Markov chain.}\label{SecDefMC}
Let $(X,\mathcal{B})$ be a measurable space (with $\mathcal{B}$ being the sigma-algebra of measurable subsets of the
set $X$). Consider a Markov chain $\xi=(\xi_n)_{n\ge 1}$,
with the phase space $X$ and the initial distribution $\mu_1$.
For any $B\in\mathcal{B}$, set
\[
k_n(x,B)=\mathbb{P}(\xi_{n+1}\in B\, |\,\xi_n=x).
\]
We write $k_n(x,dy)$ for the corresponding transition kernels of the chain $\xi$.

Let $\mu_n$ be the distribution of $\xi_n$. As usual, for $n\ge 2$ and $B\in\mathcal{B}$ we have
\begin{equation}\label{1.H}
\mu_{n}(B)=\mathbb{P}(\xi_{n}\in B)=\int_X \mu_{n-1}(dx)k_{n-1}(x,B).
\end{equation}
We thus have a sequence of `Markov related' measure spaces $(X,\mathcal{B},\mu_n)$. Denote by $H_n$ the
Hilbert space of $\mu_n$-square integrable real valued functions,
\begin{equation}\label{2.H}
H_n=\{f:X\mapsto\mathbb{R},\ \int_X|f(x)|^2\mu_n(dx)<\infty\}
\end{equation}
with the standard inner product: if $f,\, h\in H_n$ then $\left<f,h\right>_{H_n}=\int_Xf(x)h(x)\mu_n(dx).$
Set
\begin{equation}\label{DefH^0}
 H_n^{0}=\{f\in H_n:\, \int_Xf(x)\mu_n(dx)=0\}.
\end{equation}
The integral with respect to $\mu_n$ will be denote $\mathbb{E}_{n}$ :  $\mathbb{E}_{n}(f)= \int_Xf(x)\mu_n(dx)$.

Let $K_n: H_{n+1}\mapsto H_n$ be the operator defined by
\[
(K_nf)(x)=\int_Xk_n(x,dy)f(y).
\]
We remark that if $f\in H_{n+1}$ then $K_nf\in H_{n}$ which is a standard property of any Markov chain.
Note that the operator $K_n$ `computes' the conditional expectation of $f(\xi_{n+1})$ conditioned on $\xi_n=x$.

Denote by $K_n^{0}$ the restriction of $K_n$ to $H_{n+1}^{0}$. It is easy to see that if $\mathbb{E}_{n+1}(f)=0$ then
$\mathbb{E}_{n}(K_nf)=0$, that is $K_n^{0}:H_{n+1}^{0}\mapsto H_{n}^{0}$.

\smallskip
\subsubsection{The matrices.}\label{SecMatrices}
Let $\mathfrak{g}_n:X\mapsto \mathrm{SL}(m,\mathbb{R})$, $n\ge1$, be a sequence of matrix-valued $\mathcal{B}$-measurable
functions on $X$.
Define a sequence of random matrices $g_j$ by setting $g_j=\mathfrak{g}_j(\xi_j),\ j\ge1$. Let $\nu_j$ be the
distribution of $g_j$, that is for a Borel set $\Gamma\subset \mathrm{SL}(m,\mathbb{R})$ we define
\begin{equation}\label{Def:nu}
\nu_j(\Gamma)=\mathbb{P}(\mathfrak{g}_j(\xi_j)\in\Gamma).
\end{equation}
By $\supp(\nu_j)\subset \mathrm{SL}(m,\mathbb{R})$ we denote the support of $\nu_j$.

Given a distribution $\nu$ on $\mathrm{SL}(m,\mathbb{R})$ we define the group $G_{\nu}$ as follows:
\begin{equation}\label{Def:Gnu}
G_{\nu}=\text{ closed group generated by the set }\{g_1g_2^{-1}\,: \ g_1,\,g_2 \in\supp(\nu) \}.
\end{equation}
By $\mathcal{S}$ we denote the unit sphere in $\mathbb{R}^m$.
\begin{definition} For $g\in \mathrm{SL}(m,\mathbb{R})$ and $u\in\mathcal{S}$ we define $g.u=gu/||gu||$.

The induced action of $g$ on the set of probability measures on $\mathcal{S}$ is defined by $(g\kappa)(B)=\kappa(g.^{-1}B)$,
where $\kappa$ is a probability measure on $\mathcal{S}$ and $B$ is a Borel subset of $\mathcal{S}$.

We say that a probability measure $\kappa$ on $\mathcal{S}$ is preserved by $g$ if $\kappa(B)=(g\kappa)(B)$ for
any Borel $B$.

A group $G$ preserves the measure $\kappa$ on $\mathcal{S}$ if every
$g\in G$ preserves $\kappa$.
\end{definition}

\begin{remark} We can add one more degree of randomness to the way the matrices $g_n$ are defined.
Namely, let $(\Omega, \mathcal{F}, \ell)$ be a probability space and $\mathfrak{g}=(\mathfrak{g}_n)_{n\ge1}$ be a sequence of independent 
random processes on $\Omega$ with `time' parameter $x\in X$ and values in $\SL$: for $x\in X$ and $\omega\in\Omega$ 
the value of the process at time $n$ is $\mathfrak{g}_n(x, \omega)$. We suppose of course that the function $\mathfrak{g}_n(\cdot, \cdot)$
is $\mathcal{B}\times \mathcal{F}$-measurable. Next, suppose that the Markov chain $\xi$ and the sequence $\mathfrak{g}$ are independent
and set $g_n=\mathfrak{g}_n(\xi_n, \omega)$.

This definition of the sequence $(g_n)_{n\ge1}$ provides us with a wider class of products which may be useful in applications.
However, the measures $\nu_n$ are once again defined by \eqref{Def:nu} (up to a natural modification), and because of that
both the statements and the proofs of all main results are identical for this class of processes and for the 
case of the deterministic sequence $(\mathfrak{g}_n)_{n\ge1}$ considered above.
\end{remark}

\subsection{Main results}
The following assumptions and their variations will be used throughout the paper.

I. There is a $c<1$ such that for all $n\ge 1$
\begin{equation}\label{MainCond}
\| K_n^{0}\|\le c.
\end{equation}

II. There is a set $M$ of probability measures on $\SL$ which is compact with respect
to weak convergence and such that:
\begin{equation}\label{MainCond2}
\begin{aligned}
(a)& \text{ all $\nu_n$ belong to $M$,}\\
(b)& \text{ all $\nu\in M$ are such that $G_{\nu}$ does not preserve any measure on $\mathcal{S}$.}
 \end{aligned}
\end{equation}

\begin{theorem}\label{Thm1}{\it
Suppose that assumptions I and II are satisfied. Then there is $\lambda>0$ such that with
probability 1}
\begin{equation}\label{M2}
\liminf_{n\to\infty}\frac1n \ln\|g_n\ldots g_1\|\ge \lambda.
\end{equation}
\end{theorem}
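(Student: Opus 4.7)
The plan is to establish the quantitative estimate
\[
\mathbb{E}\bigl[\|S_n u\|^{-s}\bigr] \le C\, e^{-\alpha n} \quad \text{for every } u \in \mathcal{S},
\]
for some small $s > 0$ and some $\alpha > 0$, uniform in $u$. Given this, for any $0 < \lambda < \alpha/s$, Chebyshev yields $\mathbb{P}(\|S_n u\| \le e^{\lambda n}) \le e^{\lambda s n}\mathbb{E}[\|S_n u\|^{-s}] \le C e^{-(\alpha - \lambda s)n}$, a summable bound. Borel--Cantelli, applied with one fixed $u \in \mathcal{S}$, then gives $\liminf_{n\to\infty} n^{-1} \ln \|S_n u\| \ge \lambda$ almost surely; since $\|S_n\| \ge \|S_n u\|$, this already yields the statement of the theorem.

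To obtain the uniform decay I would introduce, for a small $s > 0$, the transfer operators
\[
(L_j^{(s)} f)(x, u) = \|\mathfrak{g}_j(x) u\|^{-s} \int_X k_j(x, dy)\, f\bigl(y,\, \mathfrak{g}_j(x).u\bigr)
\]
acting on a suitable Hilbert space of functions on $X \times \mathcal{S}$; conditioning successively on $\xi_1, \xi_2, \ldots$ shows that $\mathbb{E}\bigl[\|S_n u\|^{-s} \mid \xi_1 = x\bigr] = (L_1^{(s)} \cdots L_n^{(s)} \mathbf{1})(x, u)$. When $f$ depends only on $u$, averaging the defining formula against $\mu_j$ reduces $L_j^{(s)}$ to the classical Furstenberg--Virtser operator
\[
(T_{\nu_j}^{(s)} f)(u) = \int_{\SL} \|gu\|^{-s}\, f(g.u)\, d\nu_j(g).
\]
Condition II, combined with the weak compactness of $M$, allows a standard Furstenberg-type argument (non-preservation of a measure on $\mathcal{S}$ excludes the eigenvalue $1$ at $s = 0$; joint continuity in $(s, \nu)$ and compactness of $M$ then upgrade this to a uniform spectral radius bound $\rho\bigl(T_\nu^{(s)}\bigr) \le 1 - 2\delta$ for all $\nu \in M$ and some fixed small $s > 0$).

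The main obstacle, and the point where assumption I enters essentially, is that the operators $L_j^{(s)}$ depend on $j$, so a spectral radius bound on the individual $T_{\nu_j}^{(s)}$ does not suffice: what one needs is a uniform operator-norm bound on finite products. My strategy would be to decompose each $f(x, u)$ orthogonally into its $\mu_j$-mean (constant in $x$) plus the remainder lying in $H_j^0 \otimes L^2(\mathcal{S})$. On the mean component, a single application of $L_j^{(s)}$ produces the Furstenberg--Virtser operator and contracts by the factor $1 - 2\delta$; on the remainder, the bound $\|K_j^0\| \le c < 1$ produces an independent contraction coming from the Markov part. The key technical lemma is then to show that a pair of consecutive steps $L_j^{(s)} L_{j+1}^{(s)}$ intertwines these two sources of contraction into a single uniform norm estimate $\|L_j^{(s)} L_{j+1}^{(s)}\| \le 1 - \delta'$; iterating and integrating against $\mu_1$ delivers the moment estimate above. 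Upgrading spectral-radius control of a single operator to operator-norm control of a product is precisely the innovation highlighted in the introduction over \cite{V}, and it is the step I expect to require the most care to carry out uniformly in $j$.
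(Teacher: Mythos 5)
Your outer skeleton (negative-moment estimate, Chebyshev plus Borel--Cantelli, conditioning to write $\mathbb{E}[\|S_nu\|^{-s}\mid\xi_1=x]$ as a product of transfer operators, splitting $f(x,u)$ into its $\mu_j$-mean plus a component controlled by $\|K_j^0\|\le c$, and a two-step norm contraction) is exactly the architecture of the paper's proof. The gap is in the mechanism you invoke for the contraction on the mean component. A uniform spectral-radius bound $\rho(T_\nu^{(s)})\le 1-2\delta$ cannot do the job you assign to it: in your scheme each operator $T_{\nu_j}^{(s)}$ is applied only once before being replaced by a different one, so what you need at that step is a bound on the \emph{norm} of a single application, and for small $s$ no such bound holds. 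At $s=0$ the operator fixes constants, so its norm (in sup-norm, and a fortiori the relevant $L^2$ norm, where unweighted composition operators generally have norm larger than $1$) is at least $1$, and $(T^{(s)}_\nu\mathbf{1})(u)=\int\|gu\|^{-s}d\nu(g)$ need not be $<1$ for every $u$; only iterates of a \emph{fixed} operator contract, which is precisely the spectral-radius statement you yourself note is insufficient in the non-stationary setting. Moreover, the perturbative argument you sketch (continuity of $\rho(T^{(s)}_\nu)$ in $(s,\nu)$ near $s=0$, negativity of the derivative) requires moment hypotheses of the type $\int\|g^{\pm1}\|^{\epsilon}\,d\nu<\infty$ and essentially uses positivity of the Lyapunov exponent as an input; Theorem \ref{Thm1} assumes no moment condition, and weak compactness of $M$ gives no uniform integrability, so this route is not available here. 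Also, condition II does not ``exclude the eigenvalue $1$ at $s=0$'': a $\nu$-stationary measure on $\mathcal{S}$ always exists by compactness; what II forbids is a measure invariant under every element of $G_\nu$, a strictly stronger property.

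The paper resolves exactly this point by fixing the exponent $s=m/2$ rather than a small $s$: then $V_g f(u)=f(g.u)\|gu\|^{-m/2}$ is unitary on $L_2(\mathcal{S})$ (the Jacobian of $u\mapsto g.u$ is $\|gu\|^{-m}$), so $W_\nu=\int V_g\,d\nu$ is an average of unitaries and automatically satisfies $\|W_\nu\|\le1$ with no moment assumption. The key lemma (Lemma \ref{Thm2}) upgrades this to a genuine uniform \emph{norm} bound: if $\sup_{\nu\in M}\|W_\nu\|=1$, then by weak compactness of $M$, compactness of $\mathcal{S}$, and uniform convexity of the $L_2$ ball one extracts a measure $\kappa$ on $\mathcal{S}$ with $g_1^{-1}\kappa=g_2^{-1}\kappa$ for all $g_1,g_2\in\supp(\nu)$, i.e.\ a $G_\nu$-invariant measure, contradicting II. This single-application norm bound on the $\mathcal{L}_n$-component, combined with $\|K_n^0\|\le c$ on the orthogonal complement, is what makes the two-step estimate $\|\hat K_{n-1}\hat V_{\mathfrak{g}_n}\hat K_n\hat V_{\mathfrak{g}_{n+1}}\|\le\alpha<1$ (Theorem \ref{ThmTechn}) go through; and the unitarity of $\hat V_{\mathfrak{g}_n}$ is also used repeatedly in that argument. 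Your proposal is missing this engine, and with a small exponent $s$ in place of $m/2$ the intended two-step lemma $\|L_j^{(s)}L_{j+1}^{(s)}\|\le1-\delta'$ would fail as stated.
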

Our next result allows us to relax the assumptions of Theorem \ref{Thm1}: it turns out that \eqref{M2} holds
when I and II are satisfied only for a subsequence of time moments. To state it, we need two more
definitions.
For integers $n\ge1$ and $l\ge1$ denote by  $\nu_{n,l}$ the distribution of the product
$\mathfrak{g}_{n+l-1}(\xi_{n+l-1})...\mathfrak{g}_{n}(\xi_{n})$: for
a Borel set $\Gamma\subset \mathrm{SL}(m,\mathbb{R})$ we set
\[
\nu_{n,l}(\Gamma)=\mathbb{P}(\mathfrak{g}_{n+l-1}(\xi_{n+l-1})...\mathfrak{g}_{n}(\xi_{n})\in\Gamma)\ \text{ and}
\]
\begin{equation}\label{Def:Gnul}
G_{\nu_{n,l}}=\text{ closure of the group generated by the set }
\{g\tilde{g}^{-1}\,:\,g, \tilde{g}\in\supp(\nu_{n,l}) \}.
\end{equation}
We note that $\nu_{n,1}=\nu_n$, $G_{\nu_{n,1}}=G_{\nu_{n}}$.
\begin{theorem}\label{Thm4}
{\it Suppose that there is a sequence of time intervals $[n_j,\,n_j+l_j],\ j\ge1,$ with properties
$n_1\ge1$, $l_j\ge1$, $n_{j+1}\ge n_j+l_j$ and such that:\newline
$\mathrm{(i)}$ the inequalities $\|K_{n_j}^0\|\le c$, $\|K_{n_j+l_j}^0\|\le c$, where $ c<1$, hold for all $j\ge 1$;\newline
$\mathrm{(ii)}$  the distributions $\nu_{n_j+1,l_j}$ belong to a compact set $M$ satisfying the requirement \eqref{MainCond2}(b).

Then there is a (non-random) $\lambda>0$ such that with probability 1}
\begin{equation}\label{M22}
\liminf_{j\to\infty}\frac1j \ln\|g_{n_j+l_j}\ldots g_1\|\ge \lambda
\end{equation}
and $\lambda$ in \eqref{M22} does not depend on the choice of functions
\begin{equation}\label{M24}
\{\mathfrak{g}_i(\cdot)\,:\ i\in \bigcup_{j\ge1}[n_j+l_j+1, n_{j+1}]\, \}.
\end{equation}
\end{theorem}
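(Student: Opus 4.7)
The plan is to reduce Theorem~\ref{Thm4} to Theorem~\ref{Thm1} (more precisely to the extension Lemma~\ref{LemmaParticCase} alluded to in the outline) by passing to a coarser chain whose single step corresponds to one good block together with the preceding filler. Writing
\[
P_j := g_{n_j+l_j}\cdots g_{n_j+1}, \qquad Q_j := g_{n_{j+1}}\cdots g_{n_j+l_j+1},
\]
with the convention $Q_0:=g_{n_1}\cdots g_1$ (and empty products equal to $I$), the block product $P_j$ has law $\nu_{n_j+1,l_j}\in M$ while the filler $Q_j$ is arbitrary. Setting $\tilde\xi_j:=\xi_{n_j+l_j}$ and $\tilde g_j:=P_jQ_{j-1}$, the identity $g_{n_j+l_j}\cdots g_1=\tilde g_j\tilde g_{j-1}\cdots\tilde g_1$ shows that \eqref{M22} is equivalent to exponential growth of the super-product. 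Assumption~I for the super-chain is immediate: its one-step zero-mean transition operator equals the composition
\[
\tilde K_j^0 = K_{n_j+l_j}^0\, K_{n_j+l_j+1}^0\cdots K_{n_{j+1}+l_{j+1}-1}^0,
\]
which contains the two contracting factors $K_{n_j+l_j}^0$ (leftmost) and $K_{n_{j+1}}^0$ (lying in the range because $n_j+l_j\le n_{j+1}\le n_{j+1}+l_{j+1}-1$ by the hypothesis $n_{j+1}\ge n_j+l_j$ and $l_{j+1}\ge 1$), each of norm $\le c$ by~(i), while every other factor has norm $\le 1$. Hence $\|\tilde K_j^0\|\le c^2<1$ uniformly in $j$.

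The non-preservation condition (assumption~II) is the main obstacle: the filler $Q_{j-1}$ is unconstrained and \emph{a priori} the group $G_{\tilde\nu_j}$ attached to the law of $\tilde g_j=P_jQ_{j-1}$ could preserve a measure on $\mathcal S$ even though $G_{\nu_{n_j+1,l_j}}$ does not. The resolution uses the conditional independence of $P_j$ and $Q_{j-1}$ given $\xi_{n_j}$, furnished by the Markov property: for a.e.\ conditioning state $x=\xi_{n_j}$, the conditional support of $\tilde g_j$ contains all products $pq$ with $p$ ranging over the conditional support of $P_j$ and $q$ any fixed admissible filler value. Forming $(pq)(p'q)^{-1}=p{p'}^{-1}$ then shows that $G_{\tilde\nu_j}$ inherits enough of $G_{\nu_{n_j+1,l_j}}$ to preclude an invariant measure on~$\mathcal S$. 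It is precisely to accommodate this mild two-point dependence of $\tilde g_j$ on $(\tilde\xi_{j-1},\tilde\xi_j)$ (rather than on $\tilde\xi_j$ alone as in Theorem~\ref{Thm1}) that the extension Lemma~\ref{LemmaParticCase} is introduced; the hypotheses of that lemma are met by combining the two bullets above with the compactness of~$M$.

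Applying Lemma~\ref{LemmaParticCase} to $(\tilde\xi_j,\tilde g_j)$ then yields a $\lambda>0$ — depending only on $c$ and on the compact set $M$, and in particular not on the filler laws — with $\liminf_j j^{-1}\log\|\tilde g_j\cdots\tilde g_1\|\ge\lambda$ a.s., which is~\eqref{M22}. Since neither $c$ nor $M$ involves the functions indexed by~\eqref{M24}, the asserted independence of $\lambda$ from those functions is automatic. The hard step in executing the plan is thus the group-theoretic verification of step~II: ensuring that in spite of the filler factor $Q_{j-1}$, the non-preservation property of $M$ transfers to the super-chain, and this is exactly what the conditional-independence argument and the extension in Lemma~\ref{LemmaParticCase} achieve.
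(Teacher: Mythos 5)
Your reduction has a genuine gap at exactly the point you yourself identify as "the hard step": the verification of assumption II for the lumped matrices $\tilde g_j=P_jQ_{j-1}$. Conditioning on $\xi_{n_j}=x$ and matching a common filler value $q$ only produces quotients $pp'^{-1}$ with $p,p'$ both in the \emph{conditional} support of $P_j$ given $\xi_{n_j}=x$; for $p,p'$ coming from different conditioning states you cannot in general choose the same admissible $q$, so you do not recover $G_{\nu_{n_j+1,l_j}}$ inside $G_{\tilde\nu_j}$. Hypothesis (ii) controls only the group built from the \emph{unconditional} support of $P_j$, and the subgroups generated within a single conditional support may very well preserve a measure on $\mathcal S$ even when $G_{\nu_{n_j+1,l_j}}$ does not. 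This is precisely the failure mode the paper warns about in the remarks following Corollary \ref{Thm3}: the inclusion \eqref{Appl1} holds for independent matrices, but for Markov-dependent matrices it may fail when the conditional support depends on the conditioning state. So the conditional-independence argument does not transfer non-preservation to the super-chain, and no amount of compactness of $M$ repairs this.

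There is also a structural mismatch in how you invoke Lemma \ref{LemmaParticCase}. Your $\tilde g_j$ is a function of the entire excursion $\xi_{n_{j-1}+l_{j-1}+1},\dots,\xi_{n_j+l_j}$, not of $\tilde\xi_j=\xi_{n_j+l_j}$ (nor even of the pair of endpoints), so the pair $(\tilde\xi_j,\tilde g_j)$ is not of the form $g_j=\mathfrak g_j(\xi_j)$ required by Theorem \ref{Thm1}, and Lemma \ref{LemmaParticCase} is not an extension to such dependence; it is the statement that non-preservation is needed only at every other step, with the intervening matrices arbitrary. The paper's route avoids both problems at once: it enlarges the phase space, taking as the $j$-th state the whole block $\eta_j=(\xi_{a_j},\dots,\xi_{b_j})$ (alternating filler blocks and good blocks), so that the block matrix is a function of the current state, and it never lumps $P_j$ with a filler --- the good block stands alone at the even steps, where its law is exactly $\nu_{n_j+1,l_j}\in M$, while the filler steps are covered by the "arbitrary odd matrices" clause of Lemma \ref{LemmaParticCase} (this is what Theorem \ref{ThmTechn}, via the two-step bound $\|\hat K_{2j-1}\hat V_{\mathfrak g_{2j}}\hat K_{2j}\|\le\alpha$, makes possible). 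The contraction for the block chain is then not your composition formula but a separate estimate (Lemma \ref{A2}), which uses that the block kernel depends only on the last coordinate together with Cauchy--Schwarz, and only needs $\|K^0_{b_j}\|\le c$ at the block boundaries $n_j$ and $n_j+l_j$. Your computation $\|\tilde K^0_j\|\le c^2$ is correct for the coarse chain on $X$, but that chain does not carry enough information to define $\tilde g_j$, so it cannot serve as the basis of the reduction.
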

Theorem \ref{Thm1} is a particular case of Theorem \ref{Thm4} with $n_j=j$ and $l_j=1$. However, we shall see
in section \ref{SecThm4} that the proof of Theorem \ref{Thm4} will be reduced to the proof of Theorem \ref{Thm1}.

Here is one more useful particular case of Theorem \ref{Thm4}.
\begin{corollary}\label{Thm3}{\it
Suppose that assumption I is satisfied and there is $k\ge1$ such that
all distributions $\nu_{nk+1,k},\ n\ge0,$ belong to a compact set $M$ satisfying \eqref{MainCond2}(b).

Then there is a (non-random) $\lambda>0$ such that with probability 1}
\[
\liminf_{n\to\infty}\frac1n \ln\|g_n\ldots g_1\|\ge \lambda.
\]
\end{corollary}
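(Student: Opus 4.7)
I would prove Corollary~\ref{Thm3} as a direct application of Theorem~\ref{Thm4} with the choice $n_j = jk$ and $l_j = k$ for $j \ge 1$. The structural conditions are immediate: $n_1 = k \ge 1$, $l_j \ge 1$, and $n_{j+1} = (j+1)k = n_j + l_j$, so the intervals are consecutive and the ``gap'' set in~\eqref{M24} is empty. Assumption~I yields $\|K_n^0\| \le c$ uniformly in $n$, hence in particular at the endpoints $n_j = jk$ and $n_j + l_j = (j+1)k$, so condition~(i) of Theorem~\ref{Thm4} holds. The corollary's hypothesis $\nu_{mk+1, k} \in M$ for $m \ge 0$ reads, in this indexing, $\nu_{n_j+1, l_j} = \nu_{jk+1, k} \in M$, so condition~(ii) holds with the same compact set $M$.

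Theorem~\ref{Thm4} then produces a non-random $\tilde\lambda > 0$ such that, almost surely,
\[
\liminf_{j \to \infty} \frac{1}{j} \ln \|g_{(j+1)k} \ldots g_1\| \ge \tilde\lambda.
\]
Writing $N = (j+1)k$ converts the normalisation $1/j$ into $(1+o(1))\, k/N$, which gives along the subsequence of multiples of $k$
\[
\liminf_{\substack{N \to \infty \\ k \mid N}} \frac{1}{N} \ln \|g_N \ldots g_1\| \ \ge\ \tilde\lambda / k \ =: \ \lambda \ > \ 0.
\]

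To promote this to a liminf over all integers, I write $n = mk + r$ with $0 \le r < k$ and use sub-multiplicativity in the form
\[
\ln \|g_n \ldots g_1\| \ \ge\ \ln \|g_{(m+1)k} \ldots g_1\| \ -\ \ln \|g_{(m+1)k} \ldots g_{n+1}\|.
\]
By the previous step, the leading term grows at rate at least $\lambda(m+1)k$ a.s., so it remains to show that the correction --- the log-norm of a partial block product of at most $k-1$ matrices --- is $o(n)$ almost surely. I would obtain this from the tightness supplied by compactness of $M$: the full-block norms $\|g_{(m+1)k} \ldots g_{mk+1}\|$ lie in a uniformly controlled tail, and a Borel--Cantelli-type argument that exploits the spectral-gap bound from Assumption~I propagates this control to the partial-block products appearing in the correction term.

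I expect the main obstacle to be precisely this final step --- transferring the tail control from full blocks to partial blocks, which is not automatic because the marginal distributions of the individual $g_i$ are not themselves assumed to be tight. Everything preceding it is a direct verification that Theorem~\ref{Thm4} applies in the prescribed way.
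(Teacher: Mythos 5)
Your reduction to Theorem \ref{Thm4} with $n_j=jk$, $l_j=k$ is exactly the application the paper intends (the corollary is stated there without proof, as a particular case of that theorem), and your verification of conditions (i)--(ii) and the conversion of the normalisation $1/j$ into $1/N$ along the subsequence $N\in k\mathbb{Z}$ are correct. The genuine gap is the final interpolation step, and it is not merely an unfinished technicality: the claim you plan to prove there is false under the corollary's hypotheses. Those hypotheses control only the distributions of the \emph{aligned full blocks} $g_{(m+1)k}\cdots g_{mk+1}$; nothing is assumed about the individual factors, which may be deterministically unbounded. For instance, take $k=2$, an independent chain (so $K_n^0=0$), $d_j=\mathrm{diag}(e^{j^2},e^{-j^2})$, $\mathfrak g_{2j-1}\equiv d_j^{-1}$ and $\mathfrak g_{2j}(\cdot)=v_j(\cdot)\,d_j$, where the $v_j$ are i.i.d.\ with support a small ball around the identity in $\mathrm{SL}(2,\mathbb{R})$. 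Every aligned block equals $v_j$, so the hypotheses hold with $M$ a single measure satisfying \eqref{MainCond2}(b); yet your correction term at $n=2m+1$ is $\ln\|g_{2m+2}\|\asymp m^2$, which is not $o(n)$, and no Borel--Cantelli or spectral-gap argument can ``propagate'' tail control from full blocks to partial blocks, because the obstruction is non-random. (In this example the conclusion of the corollary is still true --- the off-grid norms are in fact huge --- it is only your pathwise lower bound that becomes vacuous.)

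The workable completion goes back inside the proof instead of using Theorem \ref{Thm4} as a black box. What the proof of Theorem \ref{Thm1} (and of Lemma \ref{LemmaParticCase}, via the block chain $\eta$) actually establishes is the moment bound $\int_{\mathcal S}\mathbb E\bigl(\|S_Nu\|^{-\frac m2}\bigr)\,du$ decaying geometrically, through \eqref{product3}--\eqref{product4}: the operator product is grouped into factors of the form $\hat K\hat V\hat K\hat V$ over pairs of consecutive aligned blocks, each of norm at most $\alpha<1$. Since every $\hat K_i$ is a contraction and every $\hat V_{\mathfrak g_i}$ is unitary, the operators coming from the incomplete block at the right end of $[1,n]$ contribute a factor at most $1$, so the same grouping gives $\mathbb E\bigl(\|S_n\|^{-\frac m2}\bigr)\le \alpha^{\lfloor n/(2k)\rfloor-O(1)}$ for \emph{every} $n$, not only for multiples of $k$; Step 1 of the proof of Theorem \ref{ThmIndep} (Markov inequality plus Borel--Cantelli) then yields the liminf over all $n$ with some $\lambda>0$ depending only on $\alpha$ and $k$. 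In short, the negative-moment estimate is insensitive to the size of the leftover partial-block factors, whereas the pathwise submultiplicativity estimate you chose is not, and that is precisely why your last step cannot be repaired as proposed.
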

\textbf{Remarks.}\begin{enumerate}
\item[1.] If in Theorem \ref{Thm4} $n_j+l_j= n_{j+1}$ then, by convention, the interval
$[n_j+l_j+1, n_{j+1}]$ in \eqref{M24} is empty .

\item[2.] If $\xi$ is a finite Markov chain with $X=\{1,...,r\}$ then it is easy to see that \eqref{MainCond}
holds if there is $\delta>0$ such that $\mathbb{P}(\xi_{n+1}=j\big|\xi_n=i )\ge\delta$ for all $n\ge1$
and all $i,j\in X$. Similarly, \eqref{MainCond} is satisfied if $X$ is a compact metric space and
$k_n(x,dy)=\bar{k}_n(x,y)dy$, where $dy$ is a measure on $X$ and $(\bar{k}_n(x,y))_{n\ge1}$ is a sequens of equicontinuous
functions of $(x,y)$ such that $\bar{k}_n(x,y)\ge\delta>0$.

In the case when $\xi$ is a stationary ergodic Markov chain with finite phase space $X$,
the necessary and sufficient condition for \eqref{MainCond} can be easily established (see e.g. \cite{V}).

\item[3.] If matrices $g_n$ are independent then
\begin{equation}\label{Appl1}
G_{\nu_{n+1,l-1}}\subset G_{\nu_{n,l}}\ \text{ for any } n\ge 1,\ l\ge 2.
\end{equation}
Indeed, definition \eqref{Def:Gnul} of $g$ and $\tilde{g}$ implies that if $g, \tilde{g}\in\supp(\nu_{n+1,l-1})$ then
$gg', \tilde{g}\tilde{g}'\in\supp(\nu_{n,l})$, where $g', \tilde{g}'\in\supp(\nu_{n})$. Due to independence, we can
choose $g'=\tilde{g}'$ (while $g$ and $\tilde{g}$ remain fixed) and hence if $g\tilde{g}^{-1}\in G_{\nu_{n+1,l-1}}$
then $g\tilde{g}^{-1}=gg'(\tilde{g}g')^{-1}\in G_{\nu_{n,l}}$.

In applications, it may happen that the group $G_{\nu_{n+1,l-1}}$ does preserve some probability measure on the unit sphere
but the larger group $G_{\nu_{n,l}}$ doesn't.

In the case of Markov-dependent matrices, the same argument proves \eqref{Appl1} if
the support of the conditional distribution of $\{\mathfrak{g}_{n+l+1}(\xi_{n+l+1})\big|\xi_{n+l}\}$
does not depend on $\xi_{n+l}$. However, if this condition is not satisfied, \eqref{Appl1} may fail.

The example considered in section \ref{SecExample1} shows that in order to check that Condition II in above theorems
is satisfied, it may be sufficient to establish that only a subgroup of $G_{\nu_{n+1,l-1}}$ belongs to $ G_{\nu_{n,l}} $.

\end{enumerate}

\section{Some applications of the main results}\label{SecExamples}

\subsection{The classical matrices \eqref{M1} in the Markov setting}\label{SecExample1}

The example of the product of matrices \eqref{M1} is the particularly well known one (see
Introduction). Throughout this section, we suppose that in \eqref{M1} all $a_n=1$
and that \newline
(a) $q_n=\mathfrak{g}_n(\xi_n)$, where $(\xi_n)_{n\ge1}$ is a Markov chain satisfying assumption I\newline
(b) there are $\delta>0,\ C>0$ such that $\Var(q_n)\ge\delta$ and $|q_n|\le C$ for all $n\ge1$.

Let $q$ and $\tilde{q}$ denote two distinct points from the support of $q_n$  and denote by $g$ and $\tilde{g}$
the matrices corresponding to $q$ and $\tilde{q}$ respectively. Then
$
g\tilde{g}^{-1}=\left( \begin{array}{cc}
1 & q-\tilde{q} \\
0 & 1
\end{array}\right)
$
and hence $G_{\nu_n}$ is a subgroup of the group of upper triangular matrices. Since $g\tilde{g}^{-1}e=e$,
where $e=(1,\,0)^T$, the action of $G_{\nu_n}$ on the unit sphere $\mathcal{S}$ preserves any probability measure
supported by the set $\{e,-e\}\subset \mathcal{S}$. Hence the exponential growth of $S_n$ doesn't follow from
Theorem \ref{Thm1} since assumption II of this theorem is not satisfied.
\begin{remark}\label{remark2} No other measure on $\mathcal{S}$ is preserved by $G_{\nu_n}$ because if $v$ is any non zero
 vector from $\mathbb{R}^2$ then $\lim_{j\to\infty}(g\tilde{g}^{-1})^j.v=\tilde{e}$, where
and  $\tilde{e}$ is either $e$ or $-e.$
\end{remark}

We shall now show that, in contrast, Corollary \ref{Thm3} with $k=2$ implies the exponential growth
of $S_n$ under very mild additional condition on the joint distribution of $(q_{n-1}, q_n)$.

\begin{proposition} Suppose that conditions $\mathrm{(a)}$ and $\mathrm{(b)}$ are satisfied and in addition

\smallskip\noindent
$\mathrm{(c)}$ for all $n\ge1$, the support of the joint distribution
of $(q_{n-1}, q_n)$ contains two points with the same first coordinate, say $(y,z)$  and $(y,\tilde{z})$
(which may depend on $n$).

\smallskip\noindent
Then the product $S_n$ grows exponentially.
\end{proposition}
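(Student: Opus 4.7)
The plan is to apply Corollary \ref{Thm3} with $k=2$. Condition I is exactly hypothesis (a), so what remains is to exhibit a weakly compact set $M$ of probability measures on $\SL$ (here $m=2$) containing every $\nu_{2n+1,2}$ and with each $\nu\in M$ satisfying the non-preservation condition \eqref{MainCond2}(b). I would take $M$ to be the weak closure of $\{\nu_{2n+1,2}:n\ge 0\}$; by (b) every matrix $g_i=\left(\begin{smallmatrix}q_i & -1\\ 1 & 0\end{smallmatrix}\right)$ lies in a fixed compact subset of $\SL$, so each $\nu_{2n+1,2}$ is supported in a fixed compact set and $M$ is weakly compact by Prokhorov's theorem.

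The computational heart is the verification of II(b) for each $\nu=\nu_{2n+1,2}$, proceeding in two stages that use (c) and (b) respectively. By (c) applied at index $2n+2$, the joint support of $(q_{2n+1},q_{2n+2})$ contains points $(y,z)$ and $(y,\tilde z)$ with $z\ne\tilde z$. Write $g$ and $\tilde g$ for the corresponding matrix products $g_{2n+2}g_{2n+1}$. Writing $\mathbf g(q)=\left(\begin{smallmatrix}q & -1\\ 1 & 0\end{smallmatrix}\right)$ and using $\mathbf g(y)\mathbf g(y)^{-1}=I$, a direct computation yields
\[
g\tilde g^{-1}=\mathbf g(z)\mathbf g(\tilde z)^{-1}=\begin{pmatrix}1 & z-\tilde z\\ 0 & 1\end{pmatrix},
\]
a non-trivial shear; hence every $G_\nu$-invariant probability measure on $\mathcal S$ must be concentrated on the shear's fixed-point set $\{\pm e\}$, where $e=(1,0)^{\mathrm T}$. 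Condition (b) and Popoviciu's inequality $\Var(q)\le\tfrac14(\mathrm{diam}\supp q)^2$ supply two distinct points $y\ne y'$ in $\supp q_{2n+1}$, so the joint support contains pairs $(y,z)$ and $(y',z'')$ with $y\ne y'$; for the corresponding products $g$ and $g''$ a short calculation shows that the $(2,1)$-entry of $g(g'')^{-1}$ equals $-(y-y')\ne 0$, so this element of $G_\nu$ moves $e$ off $\{\pm e\}$. Combining the two facts rules out any $G_\nu$-invariant probability measure on $\mathcal S$ and verifies II(b) for each $\nu_{2n+1,2}$ individually.

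The main obstacle I expect is transferring II(b) to the weak-limit points of $M$. The non-triangular element from the second stage above survives any weak limit because variances are weakly continuous on a compact set, so hypothesis (b) is inherited in the limit. Condition (c), however, can degenerate in the limit (the two equal-first-coordinate points may merge), so I would appeal to hypothesis (a) to exclude pathological limits: the bound $\|K_n^0\|\le c<1$ combined with a Cauchy--Schwarz estimate applied to the centred functions $f_n=\mathfrak g_n-\mathbb E_n\mathfrak g_n\in H_n^0$ yields the uniform correlation bound $|\mathrm{corr}(q_{n-1},q_n)|\le c$, which is weakly continuous and therefore passes to the limit. Together with the uniform marginal variance bound this correlation bound constrains any limiting joint distribution sufficiently (in particular it rules out two-atom limits, for which the correlation would be $\pm 1$) that the non-preservation argument above, adapted to the structure of the limiting support by a short case analysis, produces the required elements of $G_\nu$ for every $\nu\in M$.
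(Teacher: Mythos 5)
Your overall plan coincides with the paper's: apply Corollary~\ref{Thm3} with $k=2$, build a weakly compact $M$, and verify~\eqref{MainCond2}(b) via the shear element $g_z g_{\tilde z}^{-1}$ obtained from condition (c) together with a ``mover'' $g_z g_y(g_{\bar z}g_{\bar y})^{-1}$ obtained from condition (b). This is exactly the content of Lemma~\ref{LemmaAssIIa}, and your matrix computations match it. The paper chooses $M$ differently --- the set of \emph{all} distributions $\nu_{(z_1,z_2)}$ of pairs with $|z_i|\le C$ and $\Var z_i\ge\delta$, rather than the weak closure of the $\nu_{2n+1,2}$ --- and then invokes Lemma~\ref{LemmaAssIIa} to conclude II(b) for all of $M$. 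Note that the Lemma only addresses measures satisfying (c), and the paper's $M$ in fact contains measures for which II(b) fails: a two-atom $(z_1,z_2)$-distribution on $\{(a,b),(a',b')\}$ with $(a-a')(b-b')\in(0,4)$ gives $G_\nu$ cyclic, generated by a matrix of trace $2-(a-a')(b-b')\in(-2,2)$, i.e.\ elliptic, which does preserve a measure on $\mathcal{S}$. So the difficulty you flag --- transferring II(b) to weak-limit measures --- is real, and the paper's written proof does not explicitly address it.

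However, your proposed resolution does not close this gap. The correlation bound $|\mathrm{corr}(q_{n-1},q_n)|\le c$ is correctly derived via Cauchy--Schwarz and $\|K_n^0\|\le c$, and it does exclude affine (two-atom) degenerations, for which $|\mathrm{corr}|=1$. But it does \emph{not} exclude nonlinear graphical supports: a limiting $(z_1,z_2)$-distribution supported on $\{(y,f(y))\}$ with $f$ non-affine (say a parabola) has $|\mathrm{corr}|<1$, yet no two support points share a first coordinate, so (c) fails in the limit and the shear construction produces only the identity. The ``short case analysis'' you announce for these residual cases is not carried out, and it is not obvious it can be carried out from the stated hypotheses alone --- condition (c) as written is not a weakly closed condition, and neither (a) nor (b) supplies a uniform lower bound on $|z-\tilde z|$ that would make (c) survive the limit. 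This is a genuine missing step, although you deserve credit for seeing that a step is missing.
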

\begin{proof} To be able to use Corollary \ref{Thm3}, we first define the set $M$.
Let $\mathcal{Z}$ be the set of all two-dimensional distributions of pairs of random variables $(z_1, z_2)$
such that $|z_i|\le C$ and $\Var(z_i)\ge\delta$, $i=1,\,2$. For each such pare $(z_1, z_2)$ denote by
$\nu_{(z_1, z_2)}$ the distribution on $\mathrm{SL}(2,\mathbb{R})$ 
of the product $g_{z_2}g_{z_1}$, where $g_{z_i}$ are matrices of the form \eqref{M1}
with $a_nq_n$ replaced by $z_i$, $i=1,2$. 
Finally,
\[
M=\{\nu_{(z_1, z_2)}:\text{the distribution of $(z_1, z_2)$ belongs to $\mathcal{Z}$}\}.
\]
In words, $M$ is the set all distributions $\nu_{(z_1, z_2)}$ described above.
Conditions imposed on $z_1,\ z_2$  imply that $M$ is a weakly compact set.
Property (b) implies that the distribution of the product of the product $g_ng_{n-1}$ belongs to $M$.
The assumption II(a) of Corollary \ref{Thm3} is thus satisfied.

Our next Lemma \ref{LemmaAssIIa} proves that also assumption II(b) is satisfied and hence
the product $S_n$ grows exponentially.
\end{proof}

\begin{lemma}\label{LemmaAssIIa} If conditions $\mathrm{(b)}$ and $\mathrm{(c)}$ are satisfied then no probability measure on
$\mathcal{S}$ is preserved by the group $G_{\nu_{n-1,1}}$.
\end{lemma}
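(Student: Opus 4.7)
The plan is to argue by contradiction. Suppose a probability measure $\kappa$ on $\mathcal{S}$ is preserved by the group in question, which is generated by products $A\tilde A^{-1}$ with $A, \tilde A \in \supp(\nu_{n-1,2})$, i.e.\ $A = g_{z_2} g_{z_1}$ with $(z_1, z_2)$ in the support of the joint law of $(q_{n-1}, q_n)$ (and likewise for $\tilde A$; here $g_z$ denotes the matrix in \eqref{M1} with $q_n$ replaced by $z$). I would proceed in two stages: first, extract from condition (c) a unipotent element of the group that pins $\kappa$ down to the single projective point $\{\pm e\}$ with $e = (1,0)^T$; second, use the variance bound from (b) to produce another group element which manifestly does not fix $\{\pm e\}$, yielding the contradiction.

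For the first stage, condition (c) supplies two distinct points $(y,z)$ and $(y,\tilde z)$, necessarily with $z \ne \tilde z$, in the joint support. With $A = g_z g_y$ and $\tilde A = g_{\tilde z} g_y$, the inner $g_y g_y^{-1}$ cancels and
\[
A\tilde A^{-1} = g_z g_{\tilde z}^{-1} = \begin{pmatrix} 1 & z - \tilde z \\ 0 & 1 \end{pmatrix}
\]
is a nontrivial unipotent element of the group. By the observation in Remark \ref{remark2}, iterates of this single shear send every direction on $\mathcal{S}$ projectively to $\pm e$, so any preserved $\kappa$ must be concentrated on $\{\pm e\}$.

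For the second stage, $\Var(q_{n-1}) \ge \delta > 0$ forces the marginal support of $q_{n-1}$ to contain two distinct points $y_1 \ne y_2$. Pick any $z_i$ such that $(y_i, z_i)$ lies in the joint support, $i=1,2$, and set $A_i = g_{z_i} g_{y_i}$. A short $2 \times 2$ computation, reducing first $g_{y_1} g_{y_2}^{-1}$ to the shear $\bigl(\begin{smallmatrix} 1 & y_1 - y_2 \\ 0 & 1 \end{smallmatrix}\bigr)$ and then composing with $g_{z_1}$ on the left and $g_{z_2}^{-1}$ on the right, gives
\[
A_1 A_2^{-1} e = \begin{pmatrix} z_1(y_2 - y_1) + 1 \\ y_2 - y_1 \end{pmatrix}.
\]
Its second coordinate is nonzero, so $A_1 A_2^{-1}.e$ is not parallel to $e$, contradicting the conclusion of the first stage.

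The only real obstacle is checking that both stages can be executed from the available data: the first needs a pair of joint-support points sharing a first coordinate (supplied by (c)), while the second needs a pair with distinct first coordinates (supplied by the positive marginal variance in (b)). Once both pairs are in hand the remainder is elementary matrix arithmetic, and no further analytic input is required.
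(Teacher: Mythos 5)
Your proposal is correct and follows essentially the same route as the paper: use the shared-first-coordinate pair from (c) to produce the shear $g_z g_{\tilde z}^{-1}$ forcing any invariant measure onto $\{e,-e\}$, then use the variance bound from (b) to produce an element of the form $g_{z_1}g_{y_1}(g_{z_2}g_{y_2})^{-1}$ whose image of $e$ has nonzero second coordinate, giving the contradiction. The only cosmetic difference is that the paper reuses the point $(y,z)$ from the first stage in the second stage, while you allow two fresh points with distinct first coordinates; the computation and logic are identical.
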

\begin{proof}
Let $(g_y,\, g_{z})$, $(g_{y},\,g_{\tilde{z}} )$, and $ (g_{\bar{y}},g_{\bar{z}})$ be three pairs of matrices
corresponding to the points $(y,z)$, $(y,\tilde{z})$, and $(\bar{y}, \bar{z})$ respectively.
Here the first two points are chosen from the support of the distribution of $(q_{n-1}, q_n)$
as allowed by (c) and $(\bar{y}, \bar{z})$ is one more point from the same support such that $\bar{y}\not=y$
(it exists due to condition (b)).

The product $g_zg_y(g_{\tilde{z}}g_{y})^{-1}=g_{z}g_{\tilde{z}}^{-1}\in G_{\nu_{n-1,1}}$ by the definition
of $G_{\nu_{n-1,1}}$ (see \eqref{Def:Gnul}). Hence the group $G_{\nu_{n-1,1}}$ contains a non-trivial upper triangular
subgroup. This, by Remark \ref{remark2}, implies that if there is a measure on $\mathcal{S}$
preserved by $G_{\nu_{n-1,1}}$ then it should be supported by the set $\{e,-e\}\subset \mathcal{S}$.

But we also have that $g_zg_y(g_{\bar{z}}g_{\bar{y}})^{-1}\in G_{\nu_{n-1,1}}$ and a straightforward calculation gives:
\[
g_zg_y(g_{\bar{z}}g_{\bar{y}})^{-1}e=
\left( \begin{matrix}
z & -1 \\
1 & 0
\end{matrix}\right)
\left( \begin{matrix}
1 & y-\bar{y} \\
0 & 1
\end{matrix}\right)
\left( \begin{matrix}
 0& 1 \\
-1 &\bar{z}
\end{matrix}\right)e=
\left( \begin{matrix}
z(\bar{y}-y)+1 \\
\bar{y}-y
\end{matrix}\right).
\]
We thus see that the action of $G_{\nu_{n-1,1}}$ on $\mathcal{S}$ does not map $\{e,-e\}$ into itself
and therefore no measure on $\mathcal{S}$ is preserved by $G_{\nu_{n-1,1}}$.
Lemma is proved.\end{proof}

\subsection{Perturbations of random products.}\label{SecExample2}
Suppose that conditions of Theorem \ref{Thm1} are satisfied and consider a ``distortion" of the product $S_n$ of the
form
\[
\tilde{S}_n=a_ng_na_{n-1}g_{n-1}...a_1g_1,
\]
where $a_j\in \SL,\, j\ge1,$ is a non-random sequence of bounded matrices, $\|a_n\|\le C$ for some $C$ and all $n\ge1$,
and arbitrary otherwise. We claim that then there is $\tilde{\lambda}>0$ such that with probability 1
\begin{equation}\label{M2distorted}
\liminf_{n\to\infty}\frac1n \ln\|a_ng_n\ldots a_1g_1\|\ge \tilde{\lambda}.
\end{equation}
\begin{proof}[Proof of \eqref{M2distorted}]
Set $\tilde{\mathfrak{g}}_n=a_n\mathfrak{g}_n$, where $\mathfrak{g}_n$ is the function defined in section \ref{SecMatrices}.
denote $\tilde{g}_n=\tilde{\mathfrak{g}}_n(\xi_n)=a_ng_n$; our product now is $\tilde{S}_n=\tilde{g}_n\tilde{g}_{n-1}...\tilde{g}_1$.

To be able to use Theorem \ref{Thm1} with  functions $\mathfrak{g}_n$ replaced by $\tilde{\mathfrak{g}}_n$, we shall replace
$M$ by $\tilde{M}$ chosen so that to make assumption II to be satisfied for matrices $\tilde{g}_n$.
(Note that assumption I is satisfied because the underlying Markov chain is the same.)

Namely, define
\[
\tilde{M}=\bigcup_{b: b\in\SL,\|b\|\le C} (bM),
\]
where $b\in\SL$ and $bM=\{b\nu: \nu\in M\}$. Here $b\nu$ is the distribution on $\SL$ defined for a Borel set
$\Gamma\subset\SL$ by $(b\nu)(\Gamma)=\nu(b^{-1}(\Gamma))$.

Denote $\tilde{\nu}_n$ the distribution of $\tilde{g}_n$. Since $\supp(\tilde{\nu}_n)=a_n\supp(\nu_n)\in \tilde{M}$, assumption II
is satisfied  because of the following lemma.
\begin{lemma}
$\mathrm{(i)}$ $\tilde{M}$ is a compact set. \newline
$\mathrm{(ii)}$ For any $\tilde{\nu}\in \tilde{M}$ one has: no measure on $\mathcal{S}$ is preserved by $G_{\tilde{\nu}}$.
\end{lemma}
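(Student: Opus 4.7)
The plan is to handle (i) by realising $\tilde M$ as a continuous image of a compact set, and to handle (ii) by observing that the group attached to $b\nu$ is the conjugate of the group attached to $\nu$, so any invariant measure for the former produces one for the latter.

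For (i), I would first note that $B_C:=\{b\in\SL:\|b\|\le C\}$ is a closed and bounded subset of the space of real $m\times m$ matrices (closedness follows from continuity of $\det$ and of $\|\cdot\|$), hence compact. Define the map
\[
\Phi:B_C\times M\longrightarrow \mathcal{P}(\SL),\qquad \Phi(b,\nu)=b\nu,
\]
where $(b\nu)$ is the push-forward of $\nu$ under left multiplication by $b$. For any bounded continuous $f:\SL\to\mathbb{R}$,
\[
\int f\,d(b\nu)=\int f(bg)\,d\nu(g),
\]
and the integrand depends jointly continuously on $(b,g)$, so by the definition of weak convergence $\Phi$ is continuous when $\mathcal{P}(\SL)$ is equipped with the weak topology. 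Since $M$ is weakly compact by assumption, the product $B_C\times M$ is compact and $\tilde M=\Phi(B_C\times M)$ is compact as well.

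For (ii), let $\tilde\nu=b\nu$ with $b\in B_C$ and $\nu\in M$. Then $\supp(\tilde\nu)=b\cdot\supp(\nu)$, so every generator of $G_{\tilde\nu}$ (see \eqref{Def:Gnu}) is of the form
\[
(bh_1)(bh_2)^{-1}=b\,h_1 h_2^{-1}\,b^{-1},\qquad h_1,h_2\in\supp(\nu).
\]
Since conjugation by $b$ is a homeomorphism of $\SL$, taking the closed group generated by these elements commutes with this conjugation, giving $G_{\tilde\nu}=b\,G_\nu\,b^{-1}$. Now assume, for contradiction, that some probability measure $\kappa$ on $\mathcal{S}$ is preserved by $G_{\tilde\nu}$, i.e.\ $(bgb^{-1})\kappa=\kappa$ for every $g\in G_\nu$. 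Using that $\nu\mapsto g\nu$ is a left action, set $\kappa':=b^{-1}\kappa$, which is again a probability measure on $\mathcal{S}$ because $b\in\SL$ is invertible. The identity $(bgb^{-1})\kappa=\kappa$ rewrites as $g\kappa'=\kappa'$, so $\kappa'$ is preserved by every element of $G_\nu$. This contradicts $\nu\in M$ via \eqref{MainCond2}(b).

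There is no serious obstacle here; the only point that deserves a line of care is the joint continuity of $\Phi$ as $b$ varies over $B_C$, which reduces to the uniform continuity of $(b,g)\mapsto f(bg)$ on compact sets, and the verification that the closed-group construction commutes with conjugation by the fixed element $b$. Both are routine, and once they are in place (i) and (ii) follow as above.
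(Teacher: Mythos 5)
Your argument is correct and essentially the paper's own: part (ii) is the same conjugation identity $G_{b\nu}=b\,G_\nu\,b^{-1}$ together with pushing an invariant measure forward by $b^{-1}$, and part (i) merely repackages the paper's subsequence extraction (from the compact ball $\{b\in\SL:\|b\|\le C\}$ and the weakly compact $M$) as continuity of the map $(b,\nu)\mapsto b\nu$. The one thin spot is your continuity claim for $\Phi$: joint continuity of $(b,g)\mapsto f(bg)$ by itself does not yield $\int f(b_ng)\,d\nu_n\to\int f(bg)\,d\nu$ when the measure varies too, since $\nu_n$ need not be compactly supported — one also invokes tightness of $M$ (Prokhorov, as $\SL$ is Polish) to restrict to a compact set carrying most of the mass, or a Skorokhod-representation argument — but this is a routine fix, and the paper's own proof uses the same convergence fact implicitly without comment.
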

\begin{proof}
Let $\bar{\nu}_n\in\tilde{M},\ n\ge1,$ be a sequence of distributions such that $\lim_{n\to\infty}\bar{\nu}_n=\bar{\nu}$.
Then there are $b_n\in\SL$ and  $\nu_n'\in M$ such that $\bar{\nu}_n=b_n\nu_n'$.
By passing, if necessary, to a subsequence, we can assume that $\lim_{n\to\infty}b_n=a$, where the convergence is in norm, and
$\lim_{n\to\infty}\nu_n'=\nu'\in M$, where the convergence is understood in the weak sense.
Hence $\bar{\nu}=a\nu'\in \tilde{M}$ and (i) is proved.

Since $G_{\bar{\nu}}=aG_{\nu'} a^{-1}$, no measure on $\mathcal{S}$ is preserved by $G_{\bar{\nu}}$. This proves (ii).
\end{proof}

And thus \eqref{M2distorted} now follows from Theorem \ref{Thm1}. \end{proof}

\subsection{Products of stationary Markov-dependent matrices}\label{SecExample3}
The goal of this section is to explain how to derive Virtser's  (and thus also
Furstenberg's) theorem from Theorem \ref{Thm1}.

To state Virtser's theorem we start with a setup which is a simplified version of the one we saw in
sections \ref{SecDefMC} and \ref{SecMatrices}.

Let $(X,\mathcal{B},\mu)$ be a probability space and let $\xi=(\xi_n)_{n\ge1}$  be a stationary Markov chain with
the phase space $X$, transition kernel $k(x,dy)$, and invariant measure $\mu$.

Denote by $H$ the Hilbert space of real valued functions on $X$ which are square integrable with respect to $\mu$
and let $H^0\subset H$ be the space of functions with zero mean (as in \eqref{DefH^0}).
The related transition operator $K:H\mapsto H$ acts on $f\in H$ as follows: $(Kf)(x)=\int_X k(x,dy)f(y)$. The operator
$K^0:H^0\mapsto H^0$ is the restriction of $K$ to $H^0$.

We recall that we are interested in the study of the growth of the product
\[
S_n=\mathfrak{g}(\xi_n)...\mathfrak{g}(\xi_1), \text{ where } \mathfrak{g}:X\mapsto \SL.
\]
The distribution of $\mathfrak{g}(\xi_j)$ on $\SL$ is denoted $\nu$. The group $\mathbb{G}_\nu$ is as in
Theorem \ref{ThmF} and $G_\nu$ is as in \eqref{Def:Gnu} (but now there is no dependence on $j$).


\begin{theorem}[Virtser, \cite{V}]\label{ThmV} Suppose that: \newline
$\mathrm{(a)}$ $\xi$ is a stationary ergodic Markov chain .\newline
$\mathrm{(b)}$ $\|K^0\|=c$, where $c<1$.\newline
$\mathrm{(c)}$ $\int_X\ln\|\mathfrak{g}(x)\|\mu(dx)<\infty$.\newline
$\mathrm{(d)}$ No probability measure on $\mathcal{S}$ is preserved by $\mathbb{G}_\nu$.

Then there is $\lambda>0$ such that with probability 1 $\lim_{n\to\infty}n^{-1}\ln\|S_n\|=\lambda$.
\end{theorem}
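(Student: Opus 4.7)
The plan is to split the conclusion of Theorem \ref{ThmV} into two pieces: the \emph{existence} of the limit and its \emph{strict positivity}.  The existence is the classical Furstenberg--Kesten statement: stationarity of $\xi$, ergodicity (which follows from (b), as $\|K^0\|<1$ is strictly stronger), and the integrability (c) of $\log\|\mathfrak{g}\|$ permit an application of Kingman's subadditive ergodic theorem to $\log\|S_n\|$, producing a deterministic $\lambda$ with $n^{-1}\log\|S_n\|\to\lambda$ almost surely.  Everything that follows is devoted to showing $\lambda>0$.

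The positivity will come from Corollary \ref{Thm3} for a suitably chosen block length $k$.  Condition I of the corollary is immediate by stationarity: $K_n^0=K^0$ for every $n$, so (b) gives $\|K_n^0\|\le c<1$.  Moreover, stationarity forces $\nu_{nk+1,k}=\nu_{1,k}$ for every $n\ge 0$, so taking $M=\{\nu_{1,k}\}$---a single point, hence weakly compact---handles the membership and compactness requirement.  The whole task thus reduces to choosing $k$ so that \eqref{MainCond2}(b) holds for $\nu_{1,k}$, i.e.\ so that $G_{\nu_{1,k}}$ preserves \emph{no} probability measure on $\mathcal{S}$.

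This last step is the crux, because Theorem \ref{Thm1} uses the smaller group of quotients $g\tilde g^{-1}$ rather than the Furstenberg group $\mathbb{G}_\nu$ appearing in hypothesis (d).  I would argue by contradiction: assume that for every $k\ge 1$ some probability measure $\kappa_k$ on $\mathcal{S}$ is fixed by $G_{\nu_{1,k}}$, extract a weak-$*$ limit $\kappa$, and aim to show $\kappa$ is fixed by $\mathbb{G}_\nu$, contradicting (d).  The key algebraic observation is that if $(x_1,\ldots,x_{k-1},x)$ and $(x_1,\ldots,x_{k-1},y)$ are both admissible Markov paths, then
\[
\bigl(\mathfrak{g}(x)\mathfrak{g}(x_{k-1})\cdots \mathfrak{g}(x_1)\bigr)\bigl(\mathfrak{g}(y)\mathfrak{g}(x_{k-1})\cdots \mathfrak{g}(x_1)\bigr)^{-1}=\mathfrak{g}(x)\mathfrak{g}(y)^{-1}\in G_{\nu_{1,k}},
\]
so ``branching at the last step,'' iterated and combined with branching earlier in the path, should let $G_{\nu_{1,k}}$ approximate enough of $\mathbb{G}_\nu$ to force any commonly fixed measure on $\mathcal{S}$ to be $\mathbb{G}_\nu$-invariant.

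The main obstacle is producing such branching paths in the general measurable setting: one needs, for $k$ large and $\mu\otimes\mu$-typical $(x,y)$, a common predecessor $z$ such that both $x$ and $y$ lie in $\supp k(z,\cdot)$.  This is where the hypothesis $\|K^0\|\le c<1$ is genuinely used: its iterates contract $H^0$ geometrically, which is equivalent to exponential $L^2$-mixing of the chain and should force conditional transition supports to cover $\supp\mu$ after finitely many steps.  Deducing this support statement from the $L^2$ mixing estimate in full generality is the step I expect to require the most care; once it is in place, Corollary \ref{Thm3} yields a deterministic $\lambda'>0$ with $\liminf_{n\to\infty} n^{-1}\log\|S_n\|\ge\lambda'$, which combined with the existence of the limit gives $\lambda\ge\lambda'>0$.
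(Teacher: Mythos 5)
Your overall frame (existence of the limit from Kingman's theorem via (a) and (c); positivity by feeding the stationary chain into Corollary \ref{Thm3} with $M=\{\nu_{1,k}\}$) is reasonable, but the step you yourself identify as the crux is exactly where the argument has a genuine gap, and it is precisely the point at which the paper takes a completely different route. Your plan needs the implication: if $\mathbb{G}_\nu$ preserves no measure on $\mathcal{S}$, then for some $k$ the quotient group $G_{\nu_{1,k}}$ preserves no measure. Your contradiction argument for this has two unproven steps. First, passing to a weak-$*$ limit of the measures $\kappa_k$ only helps if invariance under (elements of) $G_{\nu_{1,k_0}}$ persists for $k>k_0$; but for Markov-dependent matrices the groups $G_{\nu_{1,k}}$ are not nested — the paper's Remark 3 after Corollary \ref{Thm3} explicitly warns that the inclusion \eqref{Appl1} may fail when the conditional support of the next matrix depends on the current state — so the limit $\kappa$ is not obviously invariant under anything. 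Second, even if you produce many ``branching'' elements, every element of $G_{\nu_{1,k}}$ is a quotient $g\tilde g^{-1}$ of two equal-length path-products; these never include the individual generators $\mathfrak{g}(x)$ of $\mathbb{G}_\nu$, so a measure invariant under all such quotients does not directly contradict (d). Upgrading quotient-invariance to $\mathbb{G}_\nu$-invariance requires an additional argument (in the i.i.d.\ case one can combine nestedness with a fixed-point/averaging argument on the compact convex set of invariant measures; in the Markov case no such argument is supplied), and the paper deliberately avoids this issue altogether.

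Moreover, the mechanism you invoke to repair the lack of nestedness — that $\|K^0\|\le c<1$ ``should force conditional transition supports to cover $\supp\mu$ after finitely many steps'' — is not a consequence of the $L^2$ contraction. The quantity $\|K^0\|$ is the $\rho$-mixing coefficient and gives no control of supports: for example, the random walk on the circle whose step law is the Cantor-type measure with binary digits at scales $4^{-j}$ has $\|K^0\|\le\cos(\pi/4)<1$, yet its one- and two-step transition kernels are supported on Lebesgue-null sets; in a general measurable state space there is no reason conditional supports ever fill out $\supp\mu$, and no proof (nor any uniform bound on the number of steps, which your choice of a single $k$ would require) is offered. By contrast, the paper circumvents the whole issue: when $G_\nu\subsetneq\mathbb{G}_\nu$ it enlarges the chain by a ``lazy'' copy of the state space on which $\tilde{\mathfrak{g}}\equiv I_m$, so that $I_m\in\supp\tilde\nu$ and hence $G_{\tilde\nu}=\mathbb{G}_{\tilde\nu}\supseteq\mathbb{G}_\nu$ (making assumption II hold with $M=\{\tilde\nu\}$), checks $\|\tilde K^0\|\le qc+p<1$ on the relevant invariant subspace, applies Theorem \ref{Thm1}, and then transfers positivity back to $S_n$ by the time change $\lambda=q^{-1}\tilde\lambda$. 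Unless you can prove both the group-transfer statement and the support statement, your reduction to Corollary \ref{Thm3} does not go through; adopting the identity-adjoining (lazy chain) device is the natural fix.
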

\begin{proof}
It follows from the definitions of $\mathbb{G}_\nu$ and ${G}_\nu$ that $G_\nu\subset\mathbb{G}_\nu$.
If $G_\nu=\mathbb{G}_\nu$ then Theorem \ref{ThmV} is an immediate corollary of Theorem \ref{Thm1} with
$M$ in assumption II consisting of one point, $M=\{\nu\}$.

So, from now on we suppose that $G_\nu$ is a proper subgroup of $\mathbb{G}_\nu$.

Let $I_m\in \SL$ be the $m\times m$ identity matrix. Note that if $I_m\in\supp(\nu)$
then  $G_\nu=\mathbb{G}_\nu$. We shall show that if the Markov chain $\xi$ is stationary as in
Theorem \ref{ThmV} then a stronger version of Theorem \ref{Thm1} holds for this chain. Namely, the group $G_\nu$
can be replaced by $\mathbb{G}_\nu$. Virtser's theorem is again a corollary - but of this stronger version.
It should be emphasized that the stationarity of $\xi$ is crucial for the construction presented below.

\subsubsection{Definition of the Markov chain $\zeta$ and the function $\tilde{\mathfrak{g}}$}
Given the chain $\xi$, we now define a new Markov chain $\zeta$. 

The phase space of $\zeta$ is $\tilde{X}=X\cup \bar{X}$ where $\bar{X}=X\times\{1\}= \{(x,1): x\in X\}$.

To define the corresponding sigma algebra
$\tilde{\mathcal{B}}$ of subsets of $\tilde{X}$ we first define the sigma algebra $\bar{\mathcal{B}}$ of subsets of
$\bar{X}$ as the image of $\mathcal{B}$ under the natural one to one correspondence $x \leftrightarrow (x,1)$ between $X$ and $\bar{X}$.
We set $\tilde{\mathcal{B}}=\{B\cup \bar{B}: B\in\mathcal{B},\ \bar{B}\in\bar{\mathcal{B}}\}$.

The transition probabilities of the chain $\zeta$ are defined as follows. Choose a $p$, $0<p<1,$ and let $q=1-p$.
Next, define
\begin{equation}\label{M31}
\mathbb{P}(\zeta_{n+1}=(x,1)\,|\,\zeta_{n}=x)=\mathbb{P}(\zeta_{n+1}=(x,1)\,|\,\zeta_{n}=(x,1))=p,
\end{equation}
and, for $A\in\mathcal{B}$, $A\subset X\subset \tilde{X}$  define
\begin{equation}\label{M32}
\mathbb{P}(\zeta_{n+1} \in A\,|\,\zeta_{n}=x)=\mathbb{P}(\zeta_{n+1}\in A\,|\,\zeta_{n}=(x,1))=
q\mathbb{P}(\xi_{n+1} \in A\,|\,\xi_{n}=x).
\end{equation}
Define $\tilde{\mu}=(q\mu,p\mu)$ to be the initial distribution of $\zeta$: if $B\cup\bar{B}\in\tilde{B}$ then
\[
\mathbb{P}(\zeta_1\in B\cup\bar{B})=q\mu(B) +p\mu(\bar{B}).
\]
We shall see below (see Lemma \ref{LemmaInvMeas}) that
$\tilde{\mu}$ is also the invariant measure of $\zeta$.

Next, define $\tilde{\mathfrak{g}}:\tilde{X}\mapsto \SL$ as follows:
$\tilde{\mathfrak{g}}(\tilde{x})=\begin{cases} \mathfrak{g}(x) & \text{ if } \tilde{x}=x\in X,\\
I_m & \text{ if } \tilde{x}\in \bar{X}
\end{cases}$

\subsubsection{Derivation of Theorem \ref{ThmV}}

Let $\tau_1<\tau_2<...<\tau_n<...$ be the sequence of all random consecutive time moments at which
the chain $\zeta$ visits $X$. Set $\bar{\xi}_n=\zeta_{\tau_n}$. It is obvious from the definitions
\eqref{M31}, \eqref{M32} that the sequens $\bar{\xi}=(\bar{\xi}_n)_{n\ge1}$ is a Markov chain which
has the same transition probabilities and the same initial distribution as the chain $\xi$:
\begin{equation}\label{M36}
\xi\overset{d}{=}\bar{\xi}.
\end{equation}
Here and below $\overset{d}{=}$ means the equality of distributions.

Set
\[
\bar{S}_n=\bar{\mathfrak{g}}(\bar{\xi}_n)\bar{\mathfrak{g}}(\bar{\xi}_{n-1})...\bar{\mathfrak{g}}(\bar{\xi_1}),\ \
\tilde{S}_{n}=\bar{\mathfrak{g}}(\zeta_{n})...\bar{\mathfrak{g}}(\zeta_1)
\]
Equality \eqref{M36} implies that  $S_n\overset{d}{=}\bar{S}_n$. In turn $\bar{S}_n=\tilde{S}_{\tau_n}$ because the
factors forming $\tilde{S}_{\tau_n}$ are either equal to $I_m$ or coincide with one of the factors forming $\bar{S}_n$.
Hence
\[
\lim_{n\to\infty} \frac 1n\ln\|S_n\|=\lim_{n\to\infty} \frac 1n\ln\|\bar{S}_n\|=\lim_{n\to\infty}
\frac{\tau_n}{n}\frac{1}{\tau_n}\ln\|\tilde{S}_{\tau_n}\|,
\]
where the existence of all limits follows from Kingman's sub-additive ergodic theorem.
Thus, $\lambda = q^{-1}\tilde{\lambda}$ and it remains to check that $\tilde{\lambda}>0$.

\subsubsection{Theorem \ref{Thm1} in the setting of Theorem \ref{ThmV}}
The definitions of $\bar{\mathfrak{g}}$ and the formula for the invariant measure $\tilde{\mu}$ imply that the distribution
$\tilde{\nu}$ of $\bar{\mathfrak{g}}(\zeta_j)$ has the property $\supp(\tilde{\nu})=\supp(\nu)\cup\{I_m\}$.
So assumption II of Theorem \ref{Thm1} is satisfied with $M=\{\tilde{\nu}\}$ and because
$G_{\tilde{\nu}}=\mathbb{G}_{\tilde{\nu}}$.

To see that assumption I is satisfied, consider the Hilbert space $\tilde{H}$ of real-valued functions
\[
\tilde{H}=\{f=(\varphi,\psi): \|f\|^2= q\int_X\varphi(x)^2d\mu(x)+p\int_X\psi((x,1))^2d\mu(x)<\infty\},
\]
where $\varphi:X\mapsto \mathbb{R}$ and $\psi:\bar{X}\mapsto \mathbb{R}$ are the restrictions of $f$ to $X$ and $\bar{X}$
respectively.

Definitions \eqref{M31} and \eqref{M32} imply that the action of the transition operator $\tilde{K}$ of the chain $\zeta$
on $\tilde{H}$ is given by
\begin{equation}\label{M34}
\begin{aligned}
(\tilde{K}f)(x)&=q\int_Xk(x,dy)\varphi(y)+p\psi((x,1)), \text{ where } x\in X\\
(\tilde{K}f)((x,1))&=q\int_Xk(x,dy)\varphi(y)+p\psi((x,1)), \text{ where } (x,1)\in\bar{ X}.
\end{aligned}
\end{equation}

Formulae \eqref{M34} show that $\tilde{K}$ maps $\tilde{H}$ into its subspace which consists of functions $f=(\varphi,\psi)$ such that $\varphi(x)=\psi((x,1))$ for all $x\in X$.
Obviously this subspace is an invariant subspace of $\tilde{K}$. Denote this subspace $\bar{H}$ and let $\bar{H}^0\subset \bar{H}$ be the subspace
of functions with zero mean. Finally let $\tilde{K}^0$ be the restriction of $\tilde{K}$ to $\bar{H}^0$.
\begin{lemma}
$\|\tilde{K}^0\|_{\bar{H}}\le(qc+p)<1.$
\end{lemma}
\begin{proof} If $f=(\varphi,\varphi)$ then $\|f\|_{\tilde{H}}=\|\varphi\|_H$, where the notations for the norms emphasize
that $f\in \tilde{H}$ and $\varphi\in H$. If $f\in \bar{H}^0$ then $\mathbb{E}(f(\zeta))=\int_X\varphi(x)d\mu(x)=0$ and hence
$\varphi\in H^0$.  These two facts imply that for $f\in \bar{H}^0$
\[
\|\tilde{K}^0f\|_{\tilde{H}}=\|qK^0\varphi+p\varphi\|_H\le q\|K^0\| \|\varphi\|_H+p\|\varphi\|_H=(qc+p)\|f\|_{\tilde{H}}.
\]
\end{proof}
So, the chain $\zeta$ satisfies also assumption I of Theorem \ref{Thm1} with all $H_n^0$ replaced by $\bar{H}^0$
and hence $\tilde{\lambda}>0$.
\end{proof}

We finish this section with a proof of the fact which we have already stated and used above.
\begin{lemma}\label{LemmaInvMeas}
$\tilde{\mu}=(q\mu,p\mu)$ is an invariant measure of the chain $\zeta$.
\end{lemma}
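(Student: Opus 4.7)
The plan is simply to verify directly that if $\zeta_n$ has distribution $\tilde{\mu}=(q\mu, p\mu)$, then $\zeta_{n+1}$ has the same distribution. Since every set in $\tilde{\mathcal{B}}$ is of the form $B\cup \bar{B}$ with $B\in\mathcal{B}$ and $\bar{B}\in\bar{\mathcal{B}}$, and since the measures are additive, it suffices to check the invariance separately on sets $A\subset X$ and on sets $\bar{A}\subset \bar{X}$ (where $\bar{A}$ corresponds to $A\in\mathcal{B}$ via the bijection $x\leftrightarrow (x,1)$).

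First, for $A\in\mathcal{B}$, $A\subset X$, I would write
\[
\mathbb{P}(\zeta_{n+1}\in A)=\int_X (q\mu)(dx)\,\mathbb{P}(\zeta_{n+1}\in A|\zeta_n=x)+\int_{\bar{X}} (p\mu)(d(x,1))\,\mathbb{P}(\zeta_{n+1}\in A|\zeta_n=(x,1)).
\]
By \eqref{M32}, both conditional probabilities equal $q\,\mathbb{P}(\xi_{n+1}\in A|\xi_n=x)$. Factoring this out, the sum of the outer integrals contributes a factor $(q+p)=1$, so the result is $q\int_X\mu(dx)\,\mathbb{P}(\xi_{n+1}\in A|\xi_n=x)=q\mu(A)$, where in the last step I use the invariance of $\mu$ under the chain $\xi$. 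This is exactly $\tilde{\mu}(A)$.

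For $\bar{A}\subset\bar{X}$ corresponding to $A\in\mathcal{B}$, the same decomposition gives
\[
\mathbb{P}(\zeta_{n+1}\in \bar{A})=\int_X (q\mu)(dx)\,\mathbb{P}(\zeta_{n+1}\in \bar{A}|\zeta_n=x)+\int_{\bar{X}} (p\mu)(d(x,1))\,\mathbb{P}(\zeta_{n+1}\in \bar{A}|\zeta_n=(x,1)).
\]
By \eqref{M31}, the transitions from either $x$ or $(x,1)$ put mass $p$ exactly on $(x,1)$, so each conditional probability equals $p\,\mathbf{1}_A(x)$. The total is therefore $(q+p)\cdot p\,\mu(A)=p\mu(\bar{A})=\tilde{\mu}(\bar{A})$.

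The two computations combined establish $\tilde{\mu}(B\cup\bar{B})=\mathbb{P}(\zeta_{n+1}\in B\cup\bar{B})$ for every $B\cup\bar{B}\in\tilde{\mathcal{B}}$, proving the invariance. There is no genuine obstacle here: the argument is a one-step calculation that exploits (a) the splitting of $\tilde{\mu}$ into its $q$- and $p$-parts which exactly match the two branches of the transition rule, and (b) the invariance of $\mu$ under the original chain $\xi$ to collapse the $X$-to-$X$ contribution.
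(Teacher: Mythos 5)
Your proof is correct and is essentially the paper's argument: both split the one-step distribution into the $q$- and $p$-parts matching \eqref{M31}--\eqref{M32}, use $q+p=1$, and invoke the invariance of $\mu$ under $K$ to handle the $X$-component. The only (cosmetic) difference is that you verify invariance directly on sets $A$ and $\bar{A}$, while the paper verifies the equivalent identity $\int_{\tilde{X}}(\tilde{K}f)\,d\tilde{\mu}=\int_{\tilde{X}}f\,d\tilde{\mu}$ by integrating the formula \eqref{M34} for $\tilde{K}f$ against $\tilde{\mu}$.
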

\begin{proof} Formulae \eqref{M34} show that $(\tilde{K}f)(x)=(\tilde{K}f)((x,1))$. We also have that,
by the definition of the invariant measure for $\xi$, $\int_X (K\varphi)(x)d\mu(x)=\int_X \varphi(x)d\mu(x)$.
Now, the following calculation shows that
\[
\int_{\tilde{X}}(\tilde{K}f)(\tilde{x})d\tilde{\mu}(\tilde{x})=\int_X (\tilde{K}f)(x)d\mu(x)=
q\int_X\varphi(x)d\mu(x)+p\int_X\psi((x,1))d\mu(x)
\]
and hence $\tilde{\mu}$ is the invariant measure of the chain $\zeta$.
\end{proof}


\section{Proof of Theorem \ref{Thm1} for products of independent matrices}\label{SecIndep}

Suppose that matrices $g_n,\ n\ge1$ are independent. In this setting, it is natural to assume
that $X=\mathrm{SL}(m,\mathbb{R})$.
The kernels $k_n(x,dy)$ do not depend on $x$ and $\mu_n(dy)$ is the distribution of $g_n$.
Obviously, $\mu_n(dy)=k_n(dy)=\nu_n(dy)$ and $G_\nu=G_\mu$.

\begin{theorem}\label{ThmIndep} Suppose that $M$ is a compact set of probability measures on $\mathrm{SL}(m,\mathbb{R})$ and
that for any $\nu\in M$ the group $G_\nu$ does not preserve any probability measure on the unit
sphere $\mathcal{S}$. Then \eqref{M2} holds with probability 1 for some non-random $\lambda>0$.
\end{theorem}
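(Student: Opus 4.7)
The plan is to study the growth of $\|S_n\|$ through the induced action of the matrices on the unit sphere $\mathcal{S}$, via a uniform spectral-gap estimate on associated averaging operators. Let $\mathbb{H}=L^2(\mathcal{S}, du)$ with $du$ the uniform probability measure on $\mathcal{S}$, and $\mathbb{H}^0\subset\mathbb{H}$ its closed subspace of zero-mean functions. For each probability measure $\nu$ on $\SL$ introduce the averaging operator $(U_\nu f)(u)=\int_{\SL}f(g.u)\,d\nu(g)$. Since $U_\nu$ fixes the constant function, it restricts to an operator $U_\nu^0$ on $\mathbb{H}^0$, and by independence of the $g_k$ the composition $U_{\nu_n}^0\cdots U_{\nu_1}^0$ governs the conditional distribution of $S_n.u$ through its action on test functions.

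For each individual $\nu\in M$ I would apply the Furstenberg--Virtser extraction to the self-adjoint positive operator $(U_\nu^0)^*U_\nu^0$, whose associated kernel averages over products of the form $g_1 g_2^{-1}$ with $g_i\in\supp(\nu)$, i.e.\ over generators of $G_\nu$. Its spectral radius on $\mathbb{H}^0$ must be strictly less than $1$: an eigenvalue $1$ would produce a non-trivial $U_\nu$-invariant element of $\mathbb{H}^0$, which, via a standard compactness argument on the space of probability measures on $\mathcal{S}$, yields a $G_\nu$-invariant probability measure on $\mathcal{S}$, contradicting II(b). Because $(U_\nu^0)^*U_\nu^0$ is self-adjoint, its norm equals its spectral radius, so one already obtains a strict norm bound for a product of two operators drawn from $\{U_\nu^0:\nu\in M\}$, both equal to this one $\nu$. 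To get a uniform bound over the whole family I would invoke weak compactness of $M$, weak continuity of $\nu\mapsto U_\nu^0$ on bounded test functions, and upper semicontinuity of the spectral radius, producing a constant $c<1$ with $\|U_{\nu_2}^0 U_{\nu_1}^0\|_{\mathbb{H}^0}\le c$ for every pair $\nu_1,\nu_2\in M$. Iterating yields $\|U_{\nu_n}^0\cdots U_{\nu_1}^0\|_{\mathbb{H}^0}\le c^{\lfloor n/2\rfloor}$; this is the analogue of the paper's Lemma \ref{Thm2}.

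Next I would convert this operator decay into almost sure growth of $\|S_n\|$. The trivial bound $\ln\|S_n\|\ge\int_\mathcal{S}\ln\|S_n u\|\,du$ reduces the task to bounding the sphere integral below. The cocycle identity $\ln\|S_n u\|=\sum_{k=1}^n\sigma(g_k, S_{k-1}.u)$ with $\sigma(g,u)=\ln\|gu\|$, combined with the spectral-gap decay, shows that the law of $S_{k-1}.u$ on $\mathcal{S}$ is close in $\mathbb{H}^0$ sense to the uniform measure once $k$ is not too small, so each expectation approaches $\int_\mathcal{S}\int_{\SL}\ln\|gu\|\,d\nu_k(g)\,du=\int\Lambda\,d\nu_k$ with $\Lambda(g):=\int_\mathcal{S}\ln\|gu\|\,du$. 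Jensen applied to $\ln\langle u,g^Tgu\rangle$ using $\det(g^Tg)=1$ gives $\Lambda\ge 0$ with equality iff $g\in O(m)$; II(b) forbids $\nu\in M$ from being supported on a single coset of $O(m)$, and together with lower semicontinuity of $\nu\mapsto\int\Lambda\,d\nu$ and weak compactness of $M$ yields $\inf_{\nu\in M}\int\Lambda\,d\nu\ge 2\lambda>0$. This delivers $\mathbb{E}\int\ln\|S_n u\|\,du\ge\lambda n$, and a Borel--Cantelli argument, using the same spectral-gap bound to control the variance of $\int\ln\|S_n u\|\,du$, upgrades it to $\liminf n^{-1}\ln\|S_n\|\ge\lambda$ almost surely.

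The principal obstacle is the step from the single-$\nu$ spectral-radius estimate to the operator-norm estimate on products of two operators, uniformly across the compact family $M$: since $U_\nu^0$ is not self-adjoint, spectral radius and operator norm differ in general, and producing a strict contraction for products $U_{\nu_2}^0 U_{\nu_1}^0$ with possibly distinct $\nu_1,\nu_2$ requires a symmetrization argument combined with weak-continuity/compactness of $M$. This is precisely the technical innovation advertised in the introduction, and it is what makes the non-stationary scheme go through without moment assumptions on the matrices.
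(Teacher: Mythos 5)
Your framework breaks down at the very first step, and the later steps inherit the damage. The unweighted averaging operator $(U_\nu f)(u)=\int f(g.u)\,d\nu(g)$ is \emph{not} a contraction on $L^2(\mathcal{S},du)$: the uniform measure $du$ is only quasi-invariant under the projective action, the composition operator $f\mapsto f(g.\cdot)$ has $L^2(du)$-norm of order $\sup_u\|gu\|^{m/2}$, and $U_\nu$ does not even leave the zero-mean subspace $\mathbb{H}^0$ invariant (fixing constants gives invariance of $\mathbb{H}^0$ under $U_\nu^*$, not under $U_\nu$; $\int U_\nu f\,du=\int f\,du$ would require $du$ to be stationary for the kernel, which it is not). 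So ``$U_\nu^0$'' is not defined as a restriction, and the desired uniform bound $\|U^0_{\nu_2}U^0_{\nu_1}\|\le c<1$ is in general false --- indeed, in the i.i.d.\ case the Furstenberg stationary measure on $\mathcal{S}$ is typically not $du$, which already contradicts the equidistribution-to-uniform consequence such a spectral gap would force. This is exactly why the paper works instead with the weighted operators $(V_gf)(u)=f(g.u)\|gu\|^{-m/2}$ of \eqref{M8}, which are unitary on $L^2(\mathcal{S})$, so that $W_\nu=\int V_g\,d\nu$ is automatically a contraction and the whole content is Lemma \ref{Thm2}: pushing $\|W_\nu\|$ strictly below $1$, uniformly over $M$. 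Your compactness step is also not sound as stated: weak convergence $\nu_n\to\nu$ gives only strong operator convergence of the averaged operators, and neither the operator norm nor the spectral radius is upper semicontinuous in that topology; the paper avoids this by arguing directly on a near-maximizing sequence $(\nu_n,f_n)$ with $\|W_{\nu_n}f_n\|\to1$, using uniform convexity of the $L^2$ ball to force $V_gf_n\approx V_{\tilde g}f_n$ on most of $\supp(\nu_n)$ and extracting a $G_\nu$-invariant measure as a weak limit of $f_n^2\,du$.

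The second half has independent problems. The claim that the law of $S_{k-1}.u$ approaches the uniform measure is false in general (the limit, when it exists, is a Furstenberg-type measure), so the increment expectations do not approach $\int\Lambda\,d\nu_k$; and the variance/Borel--Cantelli control of $\int_\mathcal{S}\ln\|S_nu\|\,du$ requires integrability of $\ln\|g_k\|$, a moment hypothesis that Theorem \ref{ThmIndep} deliberately does not assume. The paper's route needs none of this: from $\|W_{\nu_1}\|\cdots\|W_{\nu_n}\|\le e^{-cn}$ it gets $\mathbb{E}\bigl(\|S_n\|^{-m/2}\bigr)\le\int_\mathcal{S}\mathbb{E}\bigl(\|S_nu\|^{-m/2}\bigr)du=\left<W_{\nu_1}\cdots W_{\nu_n}\mathbf{1},\mathbf{1}\right>\le e^{-cn}$ via \eqref{M9}--\eqref{M6}, and then Markov's inequality plus Borel--Cantelli give \eqref{M2} with any $\lambda<2c/m$, with no moment condition and the case $\lambda=+\infty$ not excluded. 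To repair your argument you would essentially have to replace $U_\nu$ by $W_\nu$ and replace the semicontinuity claim by the uniform-convexity extraction --- at which point you have reproduced the paper's proof.
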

\begin{proof} The proof of Theorem \ref{ThmIndep} will be carried out in three steps.
\smallskip

\textit{Step 1.} Note that in order to prove \eqref{M2} it suffices to show that there are constants $A>0$ and $\mathfrak{a}>0$ such that
\begin{equation}\label{M3}
\mathbb{E}(\|S_n\|^{-\frac m2})\le A e^{-\mathfrak{a}\, n}.
\end{equation}
Indeed, by the Markov inequality for any $\varepsilon>0$
\[
\mathbb{P}(\|S_n\|\le e^{\varepsilon n})=\mathbb{P}(\|S_n\|^{-\frac m2}\ge e^{-\frac m2\varepsilon n})
\le e^{\varepsilon\frac m2 n}\mathbb{E}(\|S_n\|^{-\frac m2})\le Ae^{(\varepsilon\frac m2-\mathfrak{a}) n}.
\]
If $\varepsilon<2\mathfrak{a}/m$ then the Borel-Cantelli lemma implies that the set
$\{n:\|S_n\|\le e^{\varepsilon n} \}$ is a.s. finite.  This means that for any $\lambda<2\mathfrak{a}/m$ \eqref{M2} holds
with probability 1.
 \smallskip

\textit{Step 2.}
Let $L_2(\mathcal{S})$ be the Hilbert space of real valued functions on $\mathcal{S}$ equipped with
the Lebesgue measure $du$ which is normalized to 1. The inner product of $f,\, h\in L_2(\mathcal{S})$ is given by
\[
\left<f,h\right>_{L_2}=\int_\mathcal{S}f(u)h(u)du.
\]
Let $\mathbb{V}$  be the set of unitary operators in $L_2(\mathcal{S})$. Consider a mapping
$V:g\mapsto V_g$, where $g\in\mathrm{SL}(m,\mathbb{R}),\ V_g\in \mathbb{V}$ and $V_g$ is defined for $f\in L_2(\mathcal{S})$
as follows:
\begin{equation}\label{M8}
(V_gf)(u)=f(g.u)||gu||^{-\frac{m}{2}}.
\end{equation}
The mapping $V$ has the following properties:
\begin{equation}\label{M7}
||f||_{L_2}=||V_gf||_{L_2}\ \ \hbox{ and }\ \ V_{g_1g_2}=V_{g_{2}}V_{g_1}.
\end{equation}
The firs relation in \eqref{M7} follows from the fact that $||gu||^{-m}$ is the Jacobian of the transformation
$u\mapsto g.u$  (see  \cite[page 425, Lemma 8.8]{F}). Therefore
$\int_{\mathcal{S}}f(g.u)^2||gu||^{-m}du=\int_{\mathcal{S}}f(u)^2du$.

The second relation in \eqref{M7} is verified by a straightforward calculation.

\begin{remark} The mapping $g\mapsto V_{g^{-1}}$ is a representation of $\mathrm{SL}(m,\mathbb{R})$
which was used in  \cite{V}. We prefer to work with  $V$ because it simplifies some formulae.

\end{remark}
For a probability measure $\nu$, put
\begin{equation}\label{M10}
W_\nu=\int_{\mathrm{SL}(m,\mathbb{R})} V_g\nu(dg), \text{ that is }\
(W_\nu f)(u)=\int_{\mathrm{SL}(m,\mathbb{R})} f(g.u)||gu||^{-\frac{m}{2}}\nu(dg).
\end{equation}

\begin{lemma}\label{Thm2} Let $M$ be a weakly compact set of probability measures on
$\textrm{SL}(m,\mathbb{R})$
such that each $\nu\in M$ has the property that no probability
measure on $\mathcal{S}$ is preserved by $G_\nu$. Then there is a constant $\beta,\ 0\le \beta <1,$
such that $\|W_\nu\|\le\beta$ for all $\nu\in M$.
\end{lemma}
We shall prove Lemma \ref{Thm2} after we finish the proof of our Theorem.

\textit{Step 3.} Since $\|S_n\|\ge \|S_nu\|$, $u\in\mathcal{S}$, \eqref{M3} would follow from
\begin{equation}\label{M4}
\int_\mathcal{S}\mathbb{E}(\|S_nu\|^{-\frac m2})du\le e^{-c n}.
\end{equation}
Note next that
\begin{equation}\label{M9}
\|g_n...g_1u\|^{-\frac m2}= (V_{g_n...g_1}\mathbf{1})(u)=(V_{g_1}...V_{g_n}\mathbf{1})(u),
\end{equation}
where $\mathbf{1}$ is the function on $\mathcal{S}$ which takes value 1 at every $u\in\mathcal{S}$.
Therefore
\begin{equation}\label{M5}
\begin{aligned}
&\int_\mathcal{S}\mathbb{E}(\|S_nu\|^{-\frac m2})du=\mathbb{E}\left(\int_\mathcal{S}\|g_n...g_1u\|^{-\frac m2}du\right)\\
&=\mathbb{E}\left(\int_\mathcal{S}(V_{g_1}...V_{g_n}\mathbf{1})(u)du\right)
=\mathbb{E}\left(\left<V_{g_1}...V_{g_n}\mathbf{1},\mathbf{1}\right>\right)=\left<\mathbb{E}\left(V_{g_1}...V_{g_n}\right)\mathbf{1},\mathbf{1}\right>.
\end{aligned}
\end{equation}
Since the operators $V_{g_1},...,V_{g_n}$ are independent we obtain
\[
\mathbb{E}\left(V_{g_1}...V_{g_n}\right)=\mathbb{E}\left(V_{g_1}\right)...\mathbb{E}\left(V_{g_n}\right)
=W_{\nu_1}...W_{\nu_n}
\]
Finally,
\begin{equation}\label{M6}
\int_\mathcal{S}\mathbb{E}(\|S_nu\|^{-\frac m2})du=\left<W_{\nu_1}...W_{\nu_n}\mathbf{1},\mathbf{1}\right>\le
\|W_{\nu_1}\|...\|W_{\nu_n}\|\le e^{-c n},
\end{equation}
where $c=\inf_{\nu\in M}(- \ln\|W_\nu\|)>0$ by Lemma \ref{Thm2}.

This completes the proof of Theorem \ref{ThmIndep}.
\end{proof}
\subsection{Proof of Lemma \ref{Thm2}}
\begin{proof} We shall prove the following statement which is equivalent to Lemma \ref{Thm2}.
\begin{equation}\label{LemmaMain}
\begin{aligned}
\text{If $\sup_{\nu\in M}\|W_\nu\|=1$} &\text{ then there is a $\nu\in M$ and a probability measure } \kappa \\
& \text{ on $\mathcal{S}$ which is preserved by $G_\nu$.}
\end{aligned}
\end{equation}
From $\sup_{\nu\in M}\|W_\nu\|=1$ it follows that there is a sequence of measures $\nu_n\in M$ and a sequence of functions
$f_n\in L_2(\mathcal{S})$ with $\|f_n\|=1$ and such that $\lim_{n\to\infty}||W_{\nu_n} f_n||=1$.
Define a sequence of probability measures $\kappa_n$ on $\mathcal{S}$ by setting $\kappa_n(du)=f_n(u)^2du$.

We shall assume that both sequences of measures, $\nu_n$ and $\kappa_n$, have weak limits:
\begin{equation}\label{seqs}
\lim_{n\to\infty}\ \nu_n=\nu, \text{ where } \nu\in M, \text{ and } \lim_{n\to\infty}\kappa_n =\kappa.
\end{equation}
As usual, if \eqref{seqs} is not satisfied then the sequence $(\nu_n, \kappa_n)$ can be replaced by its subsequence
which has these properties. For the sequence $\nu_n$, this is possible because of the condition that $M$ is a compact set.
For the sequence $\kappa_n$, the existence of a converging subsequence follows from the fact that these
measures are defined on the unit sphere $\mathcal{S}$ which is a compact metric space.

We shall prove that $\kappa$ and $\nu$ are such that
\begin{equation}\label{M12}
g_1^{-1}\kappa= g_2^{-1}\kappa\ \text{ for any }\ g_1,\, g_2\in \supp(\nu).
\end{equation}
Thus $g_1g_2^{-1}\kappa=\kappa$ and hence $G_\nu$ preserves $\kappa$.

Let us rewrite \eqref{M12} in terms of test functions: we have to prove that
\begin{equation}\label{M21}
\int_{\mathcal{S}}\psi(u)(g_1^{-1}\kappa)(du)=\int_{\mathcal{S}}\psi(u)(g_2^{-1}\kappa)(du)
\end{equation}
for any $g_1,\, g_2\in \supp(\nu)$ and any continuous function $\psi:\mathcal{S}\mapsto\mathbb{R}$.
The last equality can be rewritten in another equivalent form (see Appendix, section \ref{proofM13}.):
\begin{equation}\label{M13}
\int_{\mathcal{S}}\psi(g_1^{-1}.u)\kappa(du)=\int_{\mathcal{S}}\psi(g_2^{-1}.u)\kappa(du).
\end{equation}
Let $B(g_i,\delta)=\{g\in \SL): \|g_i-g\|< \delta\}$
be the open balls and let $S(g_i,\delta)=\{g\in \SL): \|g_i-g\|= \delta\}$ be the sphere of radius $\delta$
centered at $g_i,\ i=1,2$.

It follows from the continuity of $\psi$ and the compactness of $\mathcal{S}$ that
for a given $\varepsilon>0$ there is a $\delta>0,\ \delta=\delta(\varepsilon,g_1,g_2, \psi),$ such that for all
$g_1'\in B(g_1,\delta)$ and for all $g_2'\in B(g_2,\delta)$
\begin{equation}\label{M14}
\sup_{u\in\mathcal{S}}|\psi(g_1^{-1}.u)-\psi(g_1'^{-1}.u)|\le \varepsilon\ \text{ and }
\sup_{u\in\mathcal{S}}|\psi(g_2^{-1}.u)-\psi(g_2'^{-1}.u)|\le \varepsilon.
\end{equation}
In addition, we shall suppose that $\delta$ is such that
\begin{equation}\label{M25}
\nu(S(g_i,\delta))=0
\end{equation}
Note that $\nu(B(g_1,\delta))>0$ and $\nu(B(g_2,\delta))>0$ because $g_1,g_2\in \supp(\nu)$.
The weak convergence of $\nu_n$ to $\nu$ together with \eqref{M25} imply that
\begin{equation}\label{M15}
\lim_{n\to\infty}\nu_n(B(g_i,\delta))= \nu(B(g_i,\delta))
\text{ for } i=1,\,2.
\end{equation}
Next, we shall show that  for a given $\epsilon>0$ there is $N(\epsilon)$
such that for all $n\ge N(\epsilon)$ there are
$\tilde{g}_1\in B(g_1,\delta)$ and $\tilde{g}_2\in B(g_2,\delta)$ such that the following inequalities hold:

\begin{equation}\label{M16}
I_i=\left|\int_{\mathcal{S}}\psi(g_i^{-1}.u)\kappa(du)-\int_{\mathcal{S}}\psi(\tilde{g}_i^{-1}.u)\kappa_n(du)\right|<\epsilon,\quad i=1,\,2,
\end{equation}
\begin{equation}\label{M18}
I_3=\left|\int_{\mathcal{S}}\psi(\tilde{g}_1^{-1}.u)\kappa_n(du)-\int_{\mathcal{S}}\psi(\tilde{g}_2^{-1}.u)\kappa_n(du)\right|<\epsilon. 
\end{equation}
It follows from \eqref{M16} and \eqref{M18} that
\[
\left|\int_{\mathcal{S}}\psi(g_1^{-1}.u)\kappa(du)-\int_{\mathcal{S}}\psi(g_2^{-1}.u)\kappa(du)\right|\le
I_1+I_3+I_2<3\epsilon
\]
and, since $\epsilon$ can be arbitrarily small, the last inequality proves \eqref{M13}.

It thus remains to prove \eqref{M16} and \eqref{M18}. To prove \eqref{M16} we write
\begin{equation}\label{M20}
\begin{aligned}
&\left|\int_{\mathcal{S}}\psi(g_i^{-1}.u)\kappa(du)-\int_{\mathcal{S}}\psi(\tilde{g}_i^{-1}.u)\kappa_n(du)\right|\le\\
& \left|\int_{\mathcal{S}}\psi(g_i^{-1}.u)\kappa(du)-\int_{\mathcal{S}}\psi(g_i^{-1}.u)\kappa_n(du)\right|+
\left|\int_{\mathcal{S}}(\psi(g_i^{-1}.u)-\psi(\tilde{g}_i^{-1}.u))\kappa_n(du)\right|\le\\
&\left|\int_{\mathcal{S}}\psi(g_i^{-1}.u)\kappa(du)-\int_{\mathcal{S}}\psi(g_i^{-1}.u)\kappa_n(du)\right|+\varepsilon,
\end{aligned}
\end{equation}
where the last inequality is due to \eqref{M14}. This, together with the weak convergence of $\kappa_n$ to $\kappa$,
implies \eqref{M16}.

Remark that \eqref{M16} holds for  all $\tilde{g}_1$ and $\tilde{g}_2$ from $B(g_1,\delta)$ and $B(g_2,\delta)$ respectively.

We now turn to \eqref{M18}. Note first that for any $g\in \mathrm{SL}(m,\mathbb{R})$
\begin{equation}\label{4}
\int_{\mathcal{S}}\psi(g^{-1}.u)\kappa_n(du)=\int_{\mathcal{S}}\psi(g^{-1}.u)f_n(u)^2du=
\int_{\mathcal{S}}\psi(u)f_n(g.u)^2\|gu\|^{-m}du.
\end{equation}
The last equality in \eqref{4} follows from the change of variables
$u\mapsto g.u$ since the corresponding Jacobian is $\|gu\|^{-m}$.
(It can also be viewed as one more version of the definition of $g\kappa_n$ in the case
when $\kappa_n$ has a density function $|f_n|^2$.)

Using \eqref{4}, we present the left hand side of \eqref{M18} as
\begin{equation}
I_3= \left|\int_{\mathcal{S}}\psi(u)(f_n(\tilde{g}_1.u)^2\|\tilde{g}_1u\|^{-m}-f_n(\tilde{g}_2.u)^2\|\tilde{g}_2u\|^{-m})du\right|.
\end{equation}
Denote $f_n(g)= V_gf_n$ and define
\[
\varphi_n= W_{\nu_n} f_n=\int_{\mathrm{SL}(m,\mathbb{R})}f_n(g)\nu_n(dg).
\]
Since $\|f_n(g)\|_{L_2}=1$ and $\|\varphi_n\|_{L_2} \to 1$,
the uniform convexity of the unit sphere in $L_2(\mathcal{S})$ implies that for any $\epsilon>0$
\[
\lim_{n\to\infty}\nu_n\{g: \|\varphi_n-f_n(g)\|_{L_2}>\epsilon\}=0.
\]
We thus can choose $N_1=N_1(\epsilon,\delta,g_1,g_2)$ such that for $i=1,2$ and all $n\ge N_1$
\[
\nu_n(\{g: \|\varphi_n-f_n(g)\|_{L_2}\le\epsilon\}\cap B(g_i,\delta))>0.5\nu(B(g_i,\delta)).
\]
Hence, for every $n\ge N_1$ there are $\tilde{g}_1\in B(g,\delta)$ and $\tilde{g}_2\in B(g_2,\delta)$ such that
$\|f_n(\tilde{g}_1)-f_n(\tilde{g}_2)\|_{L_2} \le \epsilon$. But then, for these $\tilde{g}_1,\  \tilde{g}_2$ (which may depend on $n$),
we have
\begin{equation}\label{M19}
\begin{aligned}
&\left|\int_{\mathcal{S}}\psi(u)(f_n(\tilde{g}_1.u)^2\|\tilde{g}_1u\|^{-m}-f_n(\tilde{g}_2.u)^2\|\tilde{g}_2u\|^{-m})du\right|\\
&\le \sup_{u\in \mathcal{S}}|\psi(u)|\int_{\mathcal{S}}\left|f_n(\tilde{g}_1.u)^2\|\tilde{g}_1u\|^{-m}-f_n(\tilde{g}_2.u)^2\|\tilde{g}_2u\|^{-m}\right|du\\
&=\sup_{u\in \mathcal{S}}|\psi(u)|\int_{\mathcal{S}}\left|f_n(\tilde{g}_1)^2-f_n(\tilde{g}_2)^2\right|du\\
&=
\sup_{u\in \mathcal{S}}|\psi(u)|\left<\left|f_n(\tilde{g}_1)-f_n(\tilde{g}_2)\right|,\left|f_n(\tilde{g}_1)+f_n(\tilde{g}_2)\right|\right>_{L_2}
\end{aligned}
\end{equation}
Since $\left\|\;|f_n(\tilde{g}_1)+f_n(\tilde{g}_2)|\;\right\|_{L_2}\le 2$ we obtain
\[
\left<\left| f_n(\tilde{g}_1)-f_n(\tilde{g}_2)\right|,\left|f_n(\tilde{g}_1)+f_n(\tilde{g}_2)\right|\right>_{L_2}
\le 2\left\|f_n(\tilde{g}_1)-f_n(\tilde{g}_2)\right\|_{L_2}\le 2\epsilon.
\]
This proves \eqref{M18} and completes the proof of the Lemma.
\end{proof}

\smallskip\noindent
\subsection{Comments}

1. In the context of products of matrices, operators $W_\nu$
were first explicitly defined in \cite{V} where it was proved that the spectral radius of $W_\nu$ is less than 1.
In the case of identically distributed independent $g_n$ this fact implies Theorem \ref{ThmIndep}.
In fact, \cite{V} starts with a more complicated version of this operator which allows one to control
products of stationary Markov-dependent matrices and, once again, the positivity of the Lyapunov
exponent follows from the fact that the corresponding spectral radius is less than 1.

2. We are now in a position to state in a more precise way the result from \cite{W1} mentioned in the Introduction.
Namely, for the special case of independent matrices $g_j$ given by \eqref{M1}, it was proved there that
$\|W_\nu W_{\tilde{\nu}}\|\le a<1$, where $a$ is explicitly expressed  in terms of the variances of $q_n$'s.

Corollary \ref{Thm3} guarantees exponential growth for a more general class of potentials because the sequence $q_n$
is only required to be Markov-dependent. On the other hand, obtaining a constructive estimate
for $||W_\nu W_{\tilde{\nu}}||$ similar to the one in \cite{W1} requires additional work.

3. Furstenberg's theorem for the i.i.d. case can be derived directly from Theorem \ref{Thm2}.
This derivation is much more straightforward than the one for the Markov-dependent matrices
discussed in section \ref{SecExample3}.
Namely, we have again to consider two cases. If the identity matrix $I_m\in\supp(\nu)$ then
$G_\nu=\mathbb{G}_\nu$ and the positivity claimed by Theorem \ref{ThmF} follows from Theorem \ref{Thm2}.
If $I_m\not\in\supp(\nu)$, then we can apply our theorem to the measure
$\tilde{\nu}=\frac12\nu+\frac12\delta_I$. Obviously,
$I_m\in\supp(\tilde{\nu})$ and by Theorem \ref{Thm2} the corresponding $\tilde{\lambda}>0$ .
Since $\lambda=2\tilde{\lambda}$ (as in section \ref{SecExample3})
the result follows. This completes the proof of the main claim of Furstenberg's theorem.

\section{Proof of Theorem \ref{Thm1}}\label{SecMarkov}
The plan of the proof is as follows.

As in the case of Theorem \ref{ThmIndep}, our aim is to show that inequality \eqref{M4} holds.
To this end, we first introduce a sequence of Hilbert spaces $\mathbb{H}_{n}$ which are extensions of the
spaces $H_n$ (defined by \eqref{2.H}) and operators  $\hat{K}_j$ and $\hat{V}_{\mathfrak{g}_j}$
which are the analogues of $K_j$ and $V_{g_j}$.
We then compute $\int_{\mathcal{S}}\mathbb{E}\left(\|S_nu\|^{-1}\right)du$ in terms of products of these operators,
state our main technical estimate (Theorem \ref{ThmTechn}) and use it to prove Theorem \ref{Thm1}.
The proof of Theorem \ref{ThmTecn} is given after that, in section \ref{SecMainTechn}.
As in the case of independent matrices, Lemma \ref{Thm2} plays an important role in the proof.

\subsection{Auxiliary spaces and operators and the proof of Theorem \ref{Thm1}}
Denote by $\mathbb{H}_n$ the Hilbert space of $\mu_n\times du$-square integrable real valued functions on
$X\times\mathcal{S}$:
with the standard inner product: if $f,\, h\in \mathbb{H}_n$ then $\left<f,h\right>_{\mathbb{H}_n}=\int_{X\times\mathcal{S}}f(x,u)h(x,u)\mu_n(dx)du.$

The spaces $H_n$ and $L_2(\mathcal{S})$ are naturally imbedded into $\mathbb{H}_n$.
The image of the natural imbedding of $L_2(\mathcal{S})$ into $\mathbb{H}_n$ will be denoted by $\mathcal{L}_n$.
Obviously, $\mathcal{L}_n$ consists of functions from $\mathbb{H}_n$ which depend only on $u\in \mathcal{S}$.

\subsubsection{Operators $\hat{K}_n$ and $\hat{V}_\mathfrak{g_n}$}\label{Sec4.1.1} We first extend the action of $K_n$ to
$\mathbb{H}_{n+1}$.  Namely, denote by $\hat{K}_n:\mathbb{H}_{n+1}\mapsto \mathbb{H}_n$ the operator
which, for $f\in\mathbb{H}_{n+1}$, is defined by
\begin{equation}\label{3.K}
(\hat{K}_nf)(x,u)=\int_{X}k_n(x,dy)f(y,u).
\end{equation}
\begin{remark} \label{remark}
If $f\in \mathcal{L}_{n+1}$ (that is $f(x,u)\equiv f(u)$) then,
by the definition of $\hat{K}_n$, $(\hat{K}_nf)(x,u)=f(u)\in \mathcal{L}_{n}$.
\end{remark}
Next, we define unitary operators $\hat{V}_\mathfrak{g_n}:\mathbb{H}_{n}\mapsto \mathbb{H}_{n}$.
Namely, for $f\in\mathbb{H}_{n}$ we set
\begin{equation}\label{3.V}
(\hat{V}_{\mathfrak{g}_n}f)(x,u)=f(x,\mathfrak{g}_n(x).u)\|\mathfrak{g}_n(x)u\|^{-\frac m2},
\end{equation}
where $\mathfrak{g}_n$ are the functions on $X$ introduced at the beginning of subsection \ref{SecMatrices}.
\begin{remark}
The operators $V_g$ defined by \eqref{M8} act on $\mathbb{H}_n$ in a natural way. Namely, $(V_gf)(x,u)=
f(x,g.u)\|gu\|^{-\frac m2}$. Clearly, $\hat{V}_{\mathfrak{g}_n}=V_g$ if and only if $\mathfrak{g}_n(x)\equiv g$.
\end{remark}
The following lemma and especially its corollary are a variation and an extension of Lemma 3 from \cite{V}
to the case of products of non-stationary Markov-dependent matrices. 
\begin{lemma} Consider the Markov chain $(\xi_j)_{j\ge1}$ and let $f\in \mathbb{H}_n$, $u\in\mathcal{S}$. Then for $n\ge 2$
\begin{equation}\label{product}
\begin{aligned}
&\mathbb{E}\{\|\mathfrak{g}_n(\xi_n)...\mathfrak{g}_2(\xi_2)u\|^{-\frac m2}f(\xi_n,\mathfrak{g}_n(\xi_n)...
\mathfrak{g}_2(\xi_2).u)\big|\xi_1=x\}=\\
&(\hat{K}_1\hat{V}_{\mathfrak{g}_2}...\hat{K}_{n-1}\hat{V}_{\mathfrak{g}_n}f)(x,u).
\end{aligned}
\end{equation}
\end{lemma}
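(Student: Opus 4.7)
The plan is to prove \eqref{product} by induction on $n$, using the Markov property of $\xi$ together with the multiplicative ``cocycle'' identities
\[
\|\mathfrak{g}_n(\xi_n)\cdots\mathfrak{g}_2(\xi_2)u\|=\|\mathfrak{g}_n(\xi_n)\cdots\mathfrak{g}_3(\xi_3)v\|\cdot\|\mathfrak{g}_2(\xi_2)u\|,\qquad
\mathfrak{g}_n(\xi_n)\cdots\mathfrak{g}_2(\xi_2).u=\mathfrak{g}_n(\xi_n)\cdots\mathfrak{g}_3(\xi_3).v,
\]
where $v=\mathfrak{g}_2(\xi_2).u$. These are exactly the features that cause the operator $\hat{V}_{\mathfrak{g}_j}$ in \eqref{3.V} to act multiplicatively when inserted between successive applications of $\hat{K}_j$.

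For the base case $n=2$ I would simply unfold the definitions: by \eqref{3.V}, $(\hat{V}_{\mathfrak{g}_2}f)(y,u)=f(y,\mathfrak{g}_2(y).u)\|\mathfrak{g}_2(y)u\|^{-m/2}$, and then by \eqref{3.K},
\[
(\hat{K}_1\hat{V}_{\mathfrak{g}_2}f)(x,u)=\int_X k_1(x,dy)\,f(y,\mathfrak{g}_2(y).u)\,\|\mathfrak{g}_2(y)u\|^{-m/2},
\]
which is exactly the conditional expectation on the left-hand side of \eqref{product} with $n=2$, by the definition of $k_1$.

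For the induction step, assuming the identity holds for $n-1$ with any choice of functions, I would apply the tower property of conditional expectation, first conditioning on $\xi_2$. On $\{\xi_2=y\}$, the Markov property of $\xi$ implies that $(\xi_2,\xi_3,\ldots)$ is a Markov chain with transition kernels $k_2,k_3,\ldots$ starting at $y$, independent (conditionally on $\xi_2$) of $\xi_1$. Applying the inductive hypothesis to this shifted chain, with $u$ replaced by the new initial direction $v=\mathfrak{g}_2(y).u$, yields
\[
\mathbb{E}\bigl\{\|\mathfrak{g}_n(\xi_n)\cdots\mathfrak{g}_3(\xi_3)v\|^{-\frac m2}\,f(\xi_n,\mathfrak{g}_n(\xi_n)\cdots\mathfrak{g}_3(\xi_3).v)\,\bigm|\,\xi_2=y\bigr\}=F(y,v),
\]
where $F:=\hat{K}_2\hat{V}_{\mathfrak{g}_3}\cdots\hat{K}_{n-1}\hat{V}_{\mathfrak{g}_n}f\in\mathbb{H}_2$. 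Using the two cocycle identities above to factor the norm and rewrite the projective action, the original conditional expectation equals
\[
\mathbb{E}\bigl\{\|\mathfrak{g}_2(\xi_2)u\|^{-\frac m2}F(\xi_2,\mathfrak{g}_2(\xi_2).u)\,\bigm|\,\xi_1=x\bigr\}
=\int_X k_1(x,dy)(\hat{V}_{\mathfrak{g}_2}F)(y,u)=(\hat{K}_1\hat{V}_{\mathfrak{g}_2}F)(x,u),
\]
which is the required right-hand side of \eqref{product}.

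The only delicate point is the rigorous use of the Markov property to reduce to the shifted chain $(\xi_j)_{j\ge2}$ and to treat $v=\mathfrak{g}_2(\xi_2).u$ as a legitimate ``initial point'' in $\mathcal{S}$ for the inductive hypothesis; once the inductive hypothesis is formulated uniformly in the initial state $x\in X$ and the initial direction $u\in\mathcal{S}$ (as it is), this is routine. No integrability issue needs to be addressed beyond the fact that $\hat{V}_{\mathfrak{g}_j}$ are unitary and $\hat{K}_j$ are contractions between the corresponding $\mathbb{H}_j$, so all integrals converge for $f\in\mathbb{H}_n$.
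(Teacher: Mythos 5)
Your proof is correct and is essentially the paper's argument: the paper's ``straightforward induction'' writes the operator product $\hat{K}_1\hat{V}_{\mathfrak{g}_2}\cdots\hat{K}_{n-1}\hat{V}_{\mathfrak{g}_n}$ as an explicit $(n-1)$-fold integral over $k_1(x,dy_2)\cdots k_{n-1}(y_{n-1},dy_n)$ and then identifies it with the conditional expectation, which is exactly what your tower-property induction (with the norm cocycle and projective-action identities) does on the probabilistic side. No gap to report.
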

\begin{proof}
By the above definitions
\[
(\hat{K}_{n-1}\hat{V}_{\mathfrak{g}_n}f)(x,u)=\int_{X}k_{n-1}(x,dy)f(y,\mathfrak{g}_n(y).u)\|
\mathfrak{g}_n(y)u\|^{-\frac m2}.
\]
A straightforward induction argument now implies that
\begin{equation}\label{product1}
\begin{aligned}
&(\hat{K}_1\hat{V}_{\mathfrak{g}_2}...\hat{K}_{n-1}\hat{V}_{\mathfrak{g}_n}f)(x,u)=\\
&\int_{X^{n-1}}k_1(x,dy_2)...k_{n-1}(y_{n-1}, dy_n)
f(y_n,\mathfrak{g}_n(y_n)...\mathfrak{g}_2(y_2).u)
\|\mathfrak{g}_n(y_n)...\mathfrak{g}_2(y_2)u\|^{-\frac m2},
\end{aligned}
\end{equation}
where $X^{n-1}= X\times...\times X$ is the $(n-1)^{\mathrm{th}}$ direct power of $X$.
The right hand side of the last formula coincides with the definition of the expectation in the left hand side
of \eqref{product}. This proves the Lemma.
\end{proof}
Recall that $S_n=\mathfrak{g}_n(\xi_n)...\mathfrak{g}_2(\xi_2)\mathfrak{g}_1(\xi_1).$
Applying $\hat{V}_{\mathfrak{g}_1}$ to both parts of \eqref{product} we obtain:
\begin{equation}\label{product2}
\mathbb{E}\{\|S_nu\|^{-\frac m2}f(\xi_n,S_n.u)\big|\xi_1=x\}=
(\hat{V}_{\mathfrak{g}_1}\hat{K}_1\hat{V}_{\mathfrak{g}_2}...\hat{K}_{n-1}\hat{V}_{\mathfrak{g}_n}f)(x,u).
\end{equation}
In turn, \eqref{product2} implies the following analogue of \eqref{M5}:
\begin{corollary}
\begin{equation}\label{product3}
\int_\mathcal{S}\mathbb{E}(\|S_nu\|^{-\frac m2})du=\left<\hat{V}_{\mathfrak{g}_1}\hat{K}_1\hat{V}_{\mathfrak{g}_2}...
\hat{K}_{n-1}\hat{V}_{\mathfrak{g}_n}\mathbf{1}_n,\mathbf{1}_1\right>_{\mathbb{H}_1},
\end{equation}
where $\mathbf{1}_n\in\mathbb{H}_n$ and $\mathbf{1}_1\in\mathbb{H}_1$ are functions taking the value 1
at all points of their respective domains.
\end{corollary}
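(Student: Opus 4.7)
The plan is to derive \eqref{product3} as an immediate consequence of the preceding lemma, specifically of identity \eqref{product2}. Since \eqref{product2} is already expressed in terms of the very operators appearing in the corollary, essentially all that remains is to make the right choice of test function and to integrate out the initial condition.

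First I would specialize \eqref{product2} to the case $f = \mathbf{1}_n$, the constant function equal to $1$ on $X\times\mathcal{S}$. Because $\mathbf{1}_n(\xi_n, S_n.u)=1$ regardless of the arguments, the left-hand side collapses to the conditional expectation
\[
\mathbb{E}\bigl\{\|S_n u\|^{-m/2}\,\big|\,\xi_1=x\bigr\}
=(\hat{V}_{\mathfrak{g}_1}\hat{K}_1\hat{V}_{\mathfrak{g}_2}\cdots\hat{K}_{n-1}\hat{V}_{\mathfrak{g}_n}\mathbf{1}_n)(x,u).
\]
Next I would integrate both sides against $\mu_1(dx)\,du$ on $X\times\mathcal{S}$. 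On the left, applying Fubini--Tonelli (legal because the integrand $\|S_nu\|^{-m/2}$ is non-negative) together with the tower property removes the conditioning on $\xi_1$ and produces exactly $\int_{\mathcal{S}}\mathbb{E}(\|S_nu\|^{-m/2})\,du$. On the right, since $\mathbf{1}_1$ is the constant function equal to $1$ on $X\times\mathcal{S}$, the double integral over $X\times\mathcal{S}$ with respect to $\mu_1\times du$ is, by the definition of the inner product in $\mathbb{H}_1$, precisely $\bigl\langle\hat{V}_{\mathfrak{g}_1}\hat{K}_1\hat{V}_{\mathfrak{g}_2}\cdots\hat{K}_{n-1}\hat{V}_{\mathfrak{g}_n}\mathbf{1}_n,\mathbf{1}_1\bigr\rangle_{\mathbb{H}_1}$. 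Matching the two sides yields \eqref{product3}.

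There is no substantive obstacle; the only points requiring a word of care are (i) the use of Tonelli to swap the conditional expectation and the spatial integration (harmless due to non-negativity), and (ii) verifying that $\mathbf{1}_n$ and $\mathbf{1}_1$ are legitimate elements of $\mathbb{H}_n$ and $\mathbb{H}_1$ respectively, which follows from $\mu_n$ and $\mu_1$ being probability measures and $du$ being the normalized uniform measure on the compact sphere $\mathcal{S}$. The rest is bookkeeping.
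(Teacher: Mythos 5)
Your proposal is correct and follows exactly the paper's own argument: substitute $f=\mathbf{1}_n$ into \eqref{product2} and integrate both sides over $\mu_1(dx)\times du$, the integral on the left reducing to $\int_\mathcal{S}\mathbb{E}(\|S_nu\|^{-m/2})\,du$ by the tower property and the right side becoming the stated inner product in $\mathbb{H}_1$. The remarks on Tonelli and on $\mathbf{1}_n\in\mathbb{H}_n$, $\mathbf{1}_1\in\mathbb{H}_1$ are harmless extra care, not a deviation.
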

\begin{proof}
Replace $f$ in \eqref{product2} by $\mathbf{1}_n$ and integrate both sides of \eqref{product2} over $\mu_1(dx)\times du$.
\end{proof}
The following theorem is the main technical result of this paper.
\begin{theorem}\label{ThmTechn} Suppose that assumptions I and II are satisfied. Then there is a positive
constant $\alpha<1$ such that for all $n$
\begin{equation}\label{MainTechn2}
\|\hat{K}_{n-1}\hat{V}_{\mathfrak{g}_n}\hat{K}_{n}\hat{V}_{\mathfrak{g}_{n+1}}\|\le\alpha.
\end{equation}
\end{theorem}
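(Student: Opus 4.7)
The plan is to bound $\|T\|^2 = \|T^*T\|$, where $T := \hat{K}_{n-1}\hat{V}_{\mathfrak{g}_n}\hat{K}_n\hat{V}_{\mathfrak{g}_{n+1}}$, by two successive operator-inequality reductions exploiting the block structure induced by the splittings $\mathbb{H}_j = \mathcal{L}_j \oplus \mathcal{L}_j^\perp$. Denote by $P_j$ the orthogonal projection of $\mathbb{H}_j$ onto $\mathcal{L}_j$. Two elementary observations are the engine of the argument. First, each $\hat{K}_j : \mathbb{H}_{j+1} \to \mathbb{H}_j$ is block-diagonal with respect to these splittings: on $\mathcal{L}_{j+1}$ it is the natural identification of $\mathcal{L}_{j+1}$ with $\mathcal{L}_j$ (both being $L_2(\mathcal{S})$), and on $\mathcal{L}_{j+1}^\perp$ it acts (fibrewise in $u$) as $K_j^0$, which has norm $\le c$ by assumption~I. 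Consequently, as positive operators,
\[
\hat{K}_{n-1}^*\hat{K}_{n-1} \le c^2 I + (1-c^2)P_n, \qquad \hat{K}_n\hat{K}_n^* \le c^2 I + (1-c^2)P_n.
\]
Second, a direct computation against the definition \eqref{M10} of $W_\nu$ shows that for any $\bar{f} \in \mathcal{L}_j$,
\[
(P_j\hat{V}_{\mathfrak{g}_j}\bar{f})(u) = \int_X \bar{f}(\mathfrak{g}_j(x).u)\,\|\mathfrak{g}_j(x)u\|^{-m/2}\,\mu_j(dx) = (W_{\nu_j}\bar{f})(u);
\]
hence $P_j\hat{V}_{\mathfrak{g}_j}P_j = W_{\nu_j}$, and Lemma~\ref{Thm2} supplies a $\beta \in [0,1)$ with $\|W_{\nu_j}\| \le \beta$ uniformly in $j$, since $\nu_j \in M$ by assumption~II.

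The argument then runs in two steps. Applying the first operator inequality inside $T^*T = \hat{V}_{\mathfrak{g}_{n+1}}^*\hat{K}_n^*\hat{V}_{\mathfrak{g}_n}^*(\hat{K}_{n-1}^*\hat{K}_{n-1})\hat{V}_{\mathfrak{g}_n}\hat{K}_n\hat{V}_{\mathfrak{g}_{n+1}}$ and using unitarity of the $\hat{V}$'s,
\[
T^*T \le c^2\, \hat{V}_{\mathfrak{g}_{n+1}}^*\hat{K}_n^*\hat{K}_n\hat{V}_{\mathfrak{g}_{n+1}} + (1-c^2)\, S^*S, \qquad S := P_n\hat{V}_{\mathfrak{g}_n}\hat{K}_n\hat{V}_{\mathfrak{g}_{n+1}}.
\]
The first summand has norm $\le c^2$ (since $\hat{K}_n$ is a contraction and $\hat{V}_{\mathfrak{g}_{n+1}}$ is unitary), so $\|T\|^2 \le c^2 + (1-c^2)\|S\|^2$. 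To estimate $\|S\|^2 = \|SS^*\|$, I use $\hat{V}_{\mathfrak{g}_{n+1}}\hat{V}_{\mathfrak{g}_{n+1}}^* = I$ to obtain $SS^* = P_n\hat{V}_{\mathfrak{g}_n}\hat{K}_n\hat{K}_n^*\hat{V}_{\mathfrak{g}_n}^*P_n$, then insert the second operator inequality and recognize $P_n\hat{V}_{\mathfrak{g}_n}P_n = W_{\nu_n}$:
\[
SS^* \le c^2 P_n + (1-c^2)(P_n\hat{V}_{\mathfrak{g}_n}P_n)(P_n\hat{V}_{\mathfrak{g}_n}P_n)^* = c^2 P_n + (1-c^2)\,W_{\nu_n}W_{\nu_n}^*,
\]
so $\|S\|^2 \le c^2 + (1-c^2)\beta^2$. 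Combining,
\[
\|T\|^2 \le c^2 + (1-c^2)\bigl[c^2 + (1-c^2)\beta^2\bigr] = 1 - (1-c^2)^2(1-\beta^2) < 1,
\]
and $\alpha$ may be taken as the square root of the right-hand side.

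The main obstacle, anticipated by the author's comments in the Introduction, is precisely the need to invoke the $\hat{K}$-contraction twice: once via $\hat{K}_{n-1}^*\hat{K}_{n-1}$ to transfer mass off of $\mathcal{L}_n^\perp$, and once via $\hat{K}_n\hat{K}_n^*$ inside $SS^*$ to produce the inner sandwich $P_n\hat{V}_{\mathfrak{g}_n}P_n$ on which the $\beta$ from Lemma~\ref{Thm2} applies. A single pair $\hat{K}_j\hat{V}_{\mathfrak{g}_{j+1}}$ does not in general have norm strictly below $1$, because $\hat{V}_{\mathfrak{g}_{j+1}}$ can send a mean-zero function entirely into $\mathcal{L}_{j+1}$; only after forming the product of two such pairs and squaring to expose an inner $P_j\hat{V}_{\mathfrak{g}_j}P_j$ does the $W_{\nu_j}$-contraction become visible. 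This is the technical upgrade from a spectral-radius bound on one operator to a norm bound on the product of two operators of the same type, highlighted in the Introduction.
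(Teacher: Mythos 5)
Your proof is correct, but it reaches \eqref{MainTechn2} by a genuinely different route than the paper. The paper (Theorem \ref{ThmTecn}) argues by contraposition: assuming $\sup_n\|\hat{K}_{n-1}\hat{V}_{\mathfrak{g}_n}\hat{K}_n\|=1$, it takes a near-maximizing function $\varphi$, and via an $\varepsilon$-bookkeeping argument based on Lemma \ref{Lemma<c}(iv) (mass of $\varphi$ and of $\hat{V}_{\mathfrak{g}_n}\hat{K}_n\varphi$ must concentrate on the subspaces $\mathcal{L}_{n+1}$, $\mathcal{L}_n$) it forces $\sup_n\|\mathcal{Q}_n\hat{V}_{\mathfrak{g}_n}|_{\mathcal{L}_n}\|=\sup_n\|W_{\nu_n}\|=1$, which contradicts assumption II through the contrapositive form \eqref{LemmaMain} of Lemma \ref{Thm2}. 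You instead run a direct positive-operator computation: the two sandwich inequalities $\hat{K}_{n-1}^*\hat{K}_{n-1}\le c^2I+(1-c^2)P_n$ and $\hat{K}_n\hat{K}_n^*\le c^2I+(1-c^2)P_n$, conjugation by the unitaries $\hat{V}$, and the identification $P_n\hat{V}_{\mathfrak{g}_n}P_n\cong W_{\nu_n}$ (which is exactly \eqref{Main10}) yield the explicit bound $\alpha^2\le 1-(1-c^2)^2(1-\beta^2)$ with $\beta$ the uniform constant from Lemma \ref{Thm2}. The underlying ingredients are the same — the splitting $\mathbb{H}_n=\mathcal{L}_n\oplus\mathbb{H}_n^0$, the block structure and contraction property of $\hat{K}_n$ (your ``elementary observations'' are precisely Lemma \ref{Lemma<c}(i)--(iii), Remark \ref{remark}, and Lemma \ref{A1} of the paper), and Lemma \ref{Thm2} — but your derivation replaces the near-maximizer/contradiction scheme by operator inequalities. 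What your version buys is a clean quantitative constant, $\alpha=\sqrt{1-(1-c^2)^2(1-\beta^2)}$, expressed in terms of $c$ and $\beta$ (though $\beta$ itself remains non-constructive, being produced by the compactness argument of Lemma \ref{Thm2}), and it bypasses the $\varepsilon$-chase entirely; what the paper's version buys is that it needs only the qualitative statement \eqref{LemmaMain} and is organized to mirror the structure of the independent case. Your closing remark that a single factor $\hat{K}_{n-1}\hat{V}_{\mathfrak{g}_n}$ has norm $1$ is exactly Remark \ref{RmkMain}, so the necessity of squaring up to a product of two blocks is correctly identified.
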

Theorem \ref{ThmTechn} will be proved in the next section. We finish this section with the
\begin{proof}[Proof of Theorem \ref{Thm1}] Relations \eqref{product3} and \eqref{MainTechn2}
imply for odd $n\ge3$ that
\begin{equation}\label{product4}
\begin{aligned}
\int_\mathcal{S}\mathbb{E}(\|S_nu\|^{-\frac m2})du &\le
\|\hat{V}_{\mathfrak{g}_1}\hat{K}_1\hat{V}_{\mathfrak{g}_2}...\hat{K}_{n-1}\hat{V}_{\mathfrak{g}_n}\|\le
\|\hat{K}_1\hat{V}_{\mathfrak{g}_2}...\hat{K}_{n-1}\hat{V}_{\mathfrak{g}_n}\|\\
&\le \|\hat{K}_1\hat{V}_{\mathfrak{g}_2}\hat{K}_2\hat{V}_{\mathfrak{g}_3}\|...
\|\hat{K}_{n-2}\hat{V}_{\mathfrak{g}_{n-1}}\hat{K}_{n-1}\hat{V}_{\mathfrak{g}_n}\|\le \alpha^{\frac {n-1}{2}}.
\end{aligned}
\end{equation}
If $n\ge5$ is even then, similarly,
\[
\int_\mathcal{S}\mathbb{E}(\|S_nu\|^{-\frac m2})du \le
\|\hat{K}_3\hat{V}_{\mathfrak{g}_4}\hat{K}_4\hat{V}_{\mathfrak{g}_5}\|...
\|\hat{K}_{n-2}\hat{V}_{\mathfrak{g}_{n-1}}\hat{K}_{n-1}\hat{V}_{\mathfrak{g}_n}\|\le \alpha^{\frac{n-2}{2}}.
\]
We thus see that for all $n$, with the obvious choice of $A>0$ and $\mathfrak{a}>0$,
\[
\mathbb{E}(\|S_n\|^{-\frac m2}) \le \int_\mathcal{S}\mathbb{E}(\|S_nu\|^{-\frac m2})du \le  A\, e^{-\mathfrak{a}\,n}
\]
and hence exactly the same argument as in the Step 1 of the proof of Theorem \ref{ThmIndep} in section \ref{SecIndep}
finishes the proof.
\end{proof}
\subsection{Proof of Theorem \ref{ThmTechn}}\label{SecMainTechn} Remark that since $\hat{V}_{\mathfrak{g}_{n+1}}$ is
a unitary operator, the inequality \eqref{MainTechn2} is equivalent to
\begin{equation}\label{MainTechn2a}
\|\hat{K}_{n-1}\hat{V}_{\mathfrak{g}_n}\hat{K}_{n}\|\le\alpha.
\end{equation}
We shall prove this inequality  in section \ref{Sec4.2.2}. But first we prove some preparatory results.

\subsubsection{Properties of $\hat{K}_n$ and related operators}
Let
$\mathbb{H}_n^0=\{f\in \mathbb{H}_n:\;  \left<f,h\right>_{\mathbb{H}_n}=0\ \forall\, h\in \mathcal{L}_{n} \}$
be the orthogonal complement of $\mathcal{L}_{n}$ in $\mathbb{H}_n$.

Denote by $\mathcal{P}_n$ the orthogonal projector on $\mathbb{H}_n^0$ and by
$\mathcal{Q}_{n}=\mathcal{I}-\mathcal{P}_{n}$ the orthogonal projector on the subspace $\mathcal{L}_n$
(here $\mathcal{I}$ is the identity operator in $\mathbb{H}_{n}$).
We remark that if $f\in \mathbb{H}_n$  then
\begin{equation}\label{Def:projection}
(\mathcal{P}_nf)(x,u)=f(x,u)-\int_Xf(y,u)\mu_n(dy).
\end{equation}
For the proof of \eqref{Def:projection} see Appendix, section \ref{sec5.1}. Obviously,
\begin{equation}\label{Def:projection1}
(\mathcal{Q}_nf)(x,u)=\int_Xf(y,u)\mu_n(dy).
\end{equation}
Set $\hat{K}_n^0=\hat{K}_n\mathcal{P}_{n+1}$ and $\hat{K}_n^1=\hat{K}_n\mathcal{Q}_{n+1}$.
The following lemma lists several simple but useful properties of $\hat{K}_n$, $\hat{K}_n^0,\ \hat{K}_n^1$.
We remark that property (iii) is a version of Lemma 2 from \cite{V}.
\begin{lemma}\label{Lemma<c}

$\mathrm{(i)}$ $\hat{K}_n^0(\mathbb{H}_{n+1})\subset \mathbb{H}_{n}^0$.

$\mathrm{(ii)}$ $\hat{K}_n^1(\mathbb{H}_{n+1})= \mathcal{L}_{n}$.

$\mathrm{(iii)}$ If $\|K_n^0\|\le c$ then also $\|\hat{K}_n^0\|\le c$.

$\mathrm{(iv)}$ Suppose that $\|K_n^0\|\le c<1$ and let $f\in \mathbb{H}_{n+1}$ be such that
$\|\hat{K}_nf\|^2\ge\|f\|^2(1-\epsilon)$, where $0\le\epsilon\le 1$. Then
\begin{equation}\label{4.iv}
\|\mathcal{P}_{n+1}f\|^2\le \frac{\epsilon}{1-c^2}\|f\|^2\text{ or, equivalently, }
\|\mathcal{Q}_{n+1}f\|^2\ge \left(1-\frac{\epsilon}{1-c^2}\right)\|f\|^2.
\end{equation}
\end{lemma}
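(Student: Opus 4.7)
The four claims form a bookkeeping exercise tying together the Markov transition, the projection $\mathcal{P}_{n+1}$, and the decomposition $\mathbb{H}_n=\mathcal{L}_n\oplus\mathbb{H}_n^0$. My plan is to dispatch (i) and (ii) as one-line computations using the explicit formula \eqref{Def:projection} together with the relation \eqref{1.H}; then derive (iii) by fibering over $u\in\mathcal{S}$ so that $\hat{K}_n^0$ becomes a direct integral of the scalar operators $K_n^0$; and finally assemble (iv) from (i)--(iii) via an orthogonal decomposition.

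For (i), I would compute $\int_X(\hat{K}_n\mathcal{P}_{n+1}f)(x,u)\,\mu_n(dx)$ and interchange the order of integration using \eqref{1.H}; what remains is $\int_X(\mathcal{P}_{n+1}f)(y,u)\,\mu_{n+1}(dy)$, which vanishes by \eqref{Def:projection}. For (ii), I would observe that $(\mathcal{Q}_{n+1}f)(x,u)=\int_X f(y,u)\,\mu_{n+1}(dy)$ is $x$-independent, and such functions are fixed by $\hat{K}_n$ (Remark \ref{remark}); surjectivity onto $\mathcal{L}_n$ is then trivial by taking $f\in\mathcal{L}_{n+1}$. For (iii), the key move is to slice: for each fixed $u$ the function $f_u(y):=(\mathcal{P}_{n+1}f)(y,u)$ lies in $H_{n+1}^0$, and a direct check gives $(\hat{K}_n^0 f)(x,u)=(K_n^0 f_u)(x)$; squaring, integrating over $u$, and invoking $\|K_n^0\|\le c$ fiberwise yields $\|\hat{K}_n^0 f\|^2\le c^2\|\mathcal{P}_{n+1}f\|^2\le c^2\|f\|^2$.

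The only step with any substance is (iv). The decisive observation is that (i) and (ii) place $\hat{K}_n^0 f$ and $\hat{K}_n^1 f$ in the mutually orthogonal subspaces $\mathbb{H}_n^0$ and $\mathcal{L}_n$, and, by the same remark used in (ii), $\|\hat{K}_n^1 f\|_{\mathbb{H}_n}=\|\mathcal{Q}_{n+1}f\|_{\mathbb{H}_{n+1}}$ (both being the $L_2(\mathcal{S})$-norm of the common $u$-profile). Pythagoras combined with (iii) then gives
\[
\|\hat{K}_n f\|^2\le c^2\|\mathcal{P}_{n+1}f\|^2+\|\mathcal{Q}_{n+1}f\|^2=\|f\|^2-(1-c^2)\|\mathcal{P}_{n+1}f\|^2,
\]
and substituting the hypothesis $\|\hat{K}_n f\|^2\ge(1-\epsilon)\|f\|^2$ and solving for $\|\mathcal{P}_{n+1}f\|^2$ yields the stated bound.

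I do not foresee any genuine obstacle: the whole lemma is an orchestrated combination of orthogonality, the assumed bound on $K_n^0$, and the fact that $\hat{K}_n$ acts as the identity on $u$-only functions. The one point that warrants a moment's care is verifying that the $\mathbb{H}_k$-norm of an $x$-independent function does not depend on $k$, which is automatic because each $\mu_k$ is a probability measure.
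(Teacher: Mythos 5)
Your proposal is correct and follows essentially the same route as the paper: parts (i) and (ii) are handled by the interchange of integration via \eqref{1.H} and the observation that $\hat{K}_n$ fixes $u$-only functions; part (iii) is the fiberwise reduction to $K_n^0$ via slicing in $u$; and part (iv) is the orthogonal Pythagorean decomposition $\|\hat{K}_nf\|^2=\|\hat{K}_n^0f\|^2+\|\mathcal{Q}_{n+1}f\|^2$ combined with (iii). The only cosmetic difference is that for (i) you verify the pointwise mean-zero condition $\int_X(\hat{K}_n^0f)(x,u)\,\mu_n(dx)=0$ rather than pairing directly with test functions in $\mathcal{L}_n$, but these are equivalent by Fubini, so the argument is the same in substance.
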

\begin{remark}
The equivalence in \eqref{4.iv} is due to $\|\mathcal{P}_{n+1}f\|^2+\|\mathcal{Q}_{n+1}f\|^2=\|f\|^2$.
We state two inequalities because both of them will be referred to below.
\end{remark}
\begin{proof}[Proof of $\mathrm{(i)}$] Consider an $f\in \mathbb{H}_{n+1}$. Set $\phi(y,u)=f(y,u)-\int_Xf(z,u)\mu_{n+1}(dz)$.
By the definition of $\hat{K}_n^0$
\[
(\hat{K}_n^0f)(x,u)=(\hat{K}_n\mathcal{P}_{n+1}f)(x,u)=
\int_X\phi(y,u)k_n(x,dy).
\]
If $h\in \mathcal{L}_{n+1}$ then
\[
\left<\hat{K}_n^0f,h\right>_{\mathbb{H}_n}=\int_{X\times \mathcal{S}}\left(\int_X\phi(y,u)k_n(x,dy)\right)h(u)\mu_n(dx)du.
\]
Changing the order of integration in the last formula, we obtain:
\[
\begin{aligned}
\left<\hat{K}_n^0f),h\right>_{\mathbb{H}_n}=&\int_{\mathcal{S}}\left(\int_X\left(\int_X\phi(y,u)k_n(x,dy)\right)\mu_n(dx)\right)h(u)du\\
=&\int_{\mathcal{S}}\left(\int_X\phi(y,u)\mu_{n+1}(dy)\right)h(u)du=0,
\end{aligned}
\]
where the equality $\int_X\left(\int_X\phi(y,u)k_n(x,dy)\right)\mu_n(dx)=\int_X\phi(y,u)\mu_{n+1}(dy)$ is an equivalent version
of \eqref{1.H} (with $n-1$ replaced by $n$). We also use that $\int_X\phi(y,u)\mu_{n+1}(dy)=0$.
\end{proof}

\begin{proof}[Proof of $\mathrm{(ii)}$]
$\mathcal{Q}_{n+1}f\in \mathcal{L}_{n+1}$ for any $f\in \mathbb{H}_{n+1}$ by the definition $\mathcal{Q}_{n+1}$.
It follows then from \eqref{3.K} that $\hat{K}_n^1f=\hat{K}_n\mathcal{Q}_{n+1}f \in \mathcal{L}_{n}$.
\end{proof}

\begin{proof}[Proof of $\mathrm{(iii)}$] Since $\mathcal{P}_{n+1}f\in \mathbb{H}_{n+1}^0$
it suffices to prove that $\|\hat{K}_nf\|\le c \|f\|$ for $f\in \mathbb{H}_{n+1}^0$. So for the rest of this proof
we assume that $f\in \mathbb{H}_{n+1}^0$.

For such functions $\int_Xf(y,u)\mu_{n+1}(dy)=0$ for Lebesgue-a.e. $u\in\mathcal{S}$ (see Appendix,
Lemma \ref{A1}) which means that $f(\cdot,u)\in H_{n+1}^{(0)}$ for each $u$.
Since
$\left(\hat{K}_n^0f\right)(x,u)=\int_Xk_n(x,dy)f(y,u)\in H_n^{(0)}$
the condition of our Lemma implies that for these $u$
\[
\int_X\left[\left(K_nf\right)(x,u)\right]^2\mu_n(dx)\le c^2 \int_Xf(x,u)^2\mu_{n+1}(dx).
\]
Integrating both parts of this inequality over $u\in\mathcal{S}$ we obtain
\[
\|\hat{K}_n^0f\|_{\mathbb{H}_n}^2\le c^2 \int_{X\times\mathcal{S}}f(x,u)^2\mu_{n+1}(dx)du=c^2\|f\|_{\mathbb{H}_{n+1}}^2
\]
which finishes the proof of part (iii).\end{proof}
\begin{proof}[Proof of $\mathrm{(iv)}$] $f=\mathcal{P}_{n+1}f+\mathcal{Q}_{n+1}f$ for any $f\in \mathbb{H}_{n+1}$ and hence $\hat{K}_{n}f=\hat{K}_{n}\mathcal{P}_{n+1}f+\hat{K}_{n}\mathcal{Q}_{n+1}f$. By
properties (i) and (ii), the function $\hat{K}_{n}\mathcal{P}_{n+1}f=\hat{K}_{n}^0f\in\mathbb{H}_n^0$ is orthogonal to
$\hat{K}_{n}\mathcal{Q}_{n+1}f=\hat{K}_{n}^1f\in \mathcal{L}_{n} $. Therefore
\[
\|\hat{K}_{n}f\|^2=\|\hat{K}_{n}\mathcal{P}_{n+1}f\|^2+\|\hat{K}_{n}\mathcal{Q}_{n+1}f\|^2
=\|\hat{K}_{n}^0f\|^2+ \|\mathcal{Q}_{n+1}f\|^2,
\]
where the equality $\|\hat{K}_{n}\mathcal{Q}_{n+1}f\|= \|\mathcal{Q}_{n+1}f\|$ follows from \eqref{3.K}.
By (iii), $\|\hat{K}_{n}^0f\|\le c \|\mathcal{P}_{n+1}f\|$  and we obtain
\begin{equation}\label{4.iv1}
\|\hat{K}_{n}f\|^2\le c^2 \|\mathcal{P}_{n+1}f\|^2+\|\mathcal{Q}_{n+1}f\|^2=
(c^2-1)\|\mathcal{P}_{n+1}f\|^2+\|f\|^2.
\end{equation}
(We use here that $\|\mathcal{P}_{n+1}f\|^2+\|\mathcal{Q}_{n+1}f\|^2=\|f\|^2$.) By the condition of part (iv)
\[
\|f\|^2(1-\epsilon)\le (c^2-1)\|\mathcal{P}_{n+1}f\|^2+\|f\|^2 \text{ and hence } \|\mathcal{P}_{n+1}f\|^2\le \frac{\epsilon}{1-c^2}\|f\|^2.
\]
So $\|\mathcal{Q}_{n+1}f\|^2=\|f\|^2-\|\mathcal{P}_{n+1}f\|^2\ge \left(1-\frac{\epsilon}{1-c^2}\right)\|f\|^2$.
\end{proof}

\subsubsection{Proof of the main technical result}\label{Sec4.2.2} In this section, we use the following simplified
notation: $\hat{V}_{\mathfrak{g}_{n}}=\hat{V}_{n}$. Theorem \ref{ThmTechn} follows from the following statement.
\begin{theorem}\label{ThmTecn} Suppose that Condition I (inequality \eqref{MainCond}) is satisfied
and that
\begin{equation}\label{MainTecn1}
\sup_n\|\hat{K}_{n-1}\hat{V}_{n}\hat{K}_{n}\|=1.
\end{equation}
Then there is a measure $\nu\in M$ and a probability measure $\kappa$ on $\mathcal{S}$ which is preserved by
the group $G_{\nu}$.
\end{theorem}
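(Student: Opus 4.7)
The plan is to reduce this statement to Lemma \ref{Thm2} (in its equivalent form \eqref{LemmaMain}) by showing that the hypothesis $\sup_n\|\hat{K}_{n-1}\hat{V}_n\hat{K}_n\|=1$ forces $\sup_{\nu\in M}\|W_\nu\|=1$. The key is to apply Lemma \ref{Lemma<c}(iv) twice, at levels $n_j$ and $n_j-1$, so as to strip near-maximizing vectors of their $\mathbb{H}^{0}$-components and reduce them to functions of $u$ alone, on which the $\mathbb{H}_n$-side operator $\mathcal{Q}_{n_j}\hat{V}_{n_j}$ restricted to $\mathcal{L}_{n_j}$ can be identified with the operator $W_{\nu_{n_j}}$ from the i.i.d.\ theory.

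First, pick $n_j\to\infty$ and $f_j\in\mathbb{H}_{n_j+1}$ with $\|f_j\|=1$ and $\|\hat{K}_{n_j-1}\hat{V}_{n_j}\hat{K}_{n_j}f_j\|\to 1$. Since $\|\hat{K}_n\|\le 1$ (by Jensen's inequality applied to the conditional expectation embedded in \eqref{3.K}) and $\hat{V}_n$ is unitary, the squeeze
\[
1\ge \|\hat{K}_{n_j}f_j\|=\|\hat{V}_{n_j}\hat{K}_{n_j}f_j\|\ge \|\hat{K}_{n_j-1}\hat{V}_{n_j}\hat{K}_{n_j}f_j\|\to 1
\]
yields $\|\hat{K}_{n_j}f_j\|\to 1$. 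Lemma \ref{Lemma<c}(iv), applied at index $n_j$, then gives $\|\mathcal{P}_{n_j+1}f_j\|\to 0$, and hence $\|\hat{K}_{n_j}^0 f_j\|\le c\|\mathcal{P}_{n_j+1}f_j\|\to 0$. Setting $\psi_j:=\mathcal{Q}_{n_j+1}f_j\in\mathcal{L}_{n_j+1}$ and recalling from the Remark following \eqref{3.K} that $\hat{K}_{n_j}$ acts as the identity on $\mathcal{L}$, one obtains
\[
\hat{K}_{n_j}f_j=\hat{K}_{n_j}^0 f_j+\psi_j=\psi_j+o(1),
\]
where now $\psi_j\in\mathcal{L}_{n_j}$ is the same function of $u$ and $\|\psi_j\|_{L_2(\mathcal{S})}\to 1$.

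Now I repeat the argument one step down. Since $\|\hat{V}_{n_j}\psi_j\|=\|\psi_j\|\to 1$ and, by the previous step, $\|\hat{K}_{n_j-1}\hat{V}_{n_j}\psi_j\|\to 1$, Lemma \ref{Lemma<c}(iv) applied to $\hat{V}_{n_j}\psi_j\in\mathbb{H}_{n_j}$ at index $n_j-1$ (which invokes Condition I at that index) yields $\|\mathcal{P}_{n_j}(\hat{V}_{n_j}\psi_j)\|\to 0$, hence $\|\mathcal{Q}_{n_j}(\hat{V}_{n_j}\psi_j)\|_{L_2(\mathcal{S})}\to 1$. A direct change of variables using \eqref{Def:nu} shows that for any $\psi\in\mathcal{L}_n$,
\[
(\mathcal{Q}_n\hat{V}_n\psi)(u)=\int_X\psi(\mathfrak{g}_n(x).u)\|\mathfrak{g}_n(x)u\|^{-m/2}\mu_n(dx)=(W_{\nu_n}\psi)(u),
\]
so $\|W_{\nu_{n_j}}\psi_j\|_{L_2}/\|\psi_j\|_{L_2}\to 1$. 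Because $\nu_{n_j}\in M$ and $\|W_\nu\|\le 1$ always, this gives $\sup_{\nu\in M}\|W_\nu\|=1$, and the equivalent form \eqref{LemmaMain} of Lemma \ref{Thm2} then delivers a $\nu\in M$ and a $G_\nu$-invariant probability measure $\kappa$ on $\mathcal{S}$.

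The main obstacle is making the two applications of Lemma \ref{Lemma<c}(iv) cascade cleanly: the first collapses $f_j$ onto $\mathcal{L}_{n_j+1}$ to produce the function $\psi_j\in L_2(\mathcal{S})$, and the second collapses $\hat{V}_{n_j}\psi_j$ onto $\mathcal{L}_{n_j}$, whereupon the computation of $\mathcal{Q}_{n_j}\hat{V}_{n_j}\psi_j$ is precisely what bridges the Markov-dependent operators $\hat{K}$, $\hat{V}$ on $\mathbb{H}_n$ to the operator $W_{\nu_n}$ of the i.i.d.\ theory, allowing the Markov problem to be handed off to Lemma \ref{Thm2}. Everything else is bookkeeping built from $\|\hat{K}_n\|\le 1$ and the orthogonal decomposition $\mathbb{H}_n=\mathbb{H}_n^0\oplus\mathcal{L}_n$.
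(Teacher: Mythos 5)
Your proposal is correct and follows essentially the same route as the paper: two applications of Lemma \ref{Lemma<c}(iv) to collapse a near-maximizing vector onto the $\mathcal{L}$-components, the identification of $\mathcal{Q}_n\hat{V}_n$ restricted to $\mathcal{L}_n$ with $W_{\nu_n}$, and then Lemma \ref{Thm2} in the form \eqref{LemmaMain}; the only differences (approximating by $\psi_j\in\mathcal{L}$ before rather than after the second application of (iv), and the inessential assumption $n_j\to\infty$, when a sequence of indices, possibly constant, suffices) are cosmetic.
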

\begin{remark}\label{RmkMain} If matrices $g_j$ are independent then $K_{n-1}V_{n}=W_{\nu_n}$ and
Lemma \ref{Thm2} states that $\|W_{\nu_n}\|<1$. In contrast, $\|\hat{K}_{n-1}\hat{V}_{n}\|=1$. To see that, set
$\mathfrak{f}=\hat{V}_{n}^{-1}\mathfrak{h}$, where $\mathfrak{h}\in \mathcal{L}_{n}$ and is arbitrary otherwise.
Then $\hat{K}_{n-1}\hat{V}_{n}\mathfrak{f}=\hat{K}_{n-1}\mathfrak{h}$ and hence
$\|\hat{K}_{n-1}\hat{V}_{n}\mathfrak{f}\|= \|\hat{K}_{n-1}\mathfrak{h}\|=\|\mathfrak{h}\|= \|\mathfrak{f}\|$
which proves the claim.
\end{remark}
\begin{proof}[Proof of Theorem \ref{ThmTecn}.]

Equality \eqref{MainTecn1} implies that for a given (small) $\varepsilon>0$ there is $n(\varepsilon)$ and a
function $\varphi_{\varepsilon}\in \mathbb{H}_{n+1}$ such that
\begin{equation}\label{Main2}
\|\hat{K}_{n(\varepsilon)-1}\hat{V}_{n(\varepsilon)}\hat{K}_{n(\varepsilon)}
\varphi_{\varepsilon}\|^2\ge (1-\varepsilon)\|\varphi_{\varepsilon}\|^2.
\end{equation}
Since throughout this proof $\varepsilon$ will be fixed, we shall from now on
write $\varphi$ for $\varphi_\varepsilon$ and $n$ for $n(\varepsilon)$.

Set $\psi= \hat{K}_{n}\varphi$. Since  $\|\hat{K}_{n-1}\|=\|\hat{V}_{n}\|=\|\hat{K}_{n}\|=1$,
it follows from \eqref{Main2} that
\begin{equation}\label{Main3}
\|\psi\|^2= \|\hat{K}_{n}\varphi\|^2\ge (1-\varepsilon)\|\varphi\|^2 .
\end{equation}
(Otherwise, we would have had
$\|\hat{K}_{n-1}\hat{V}_{n}\hat{K}_{n}\varphi\|^2\le \|\hat{K}_{n}\varphi\|^2 <(1-\varepsilon)\|\varphi\|^2.$)

Similarly, and using that $\hat{V}_{n}$ is a unitary operator, we claim that
\begin{equation}\label{Main8}
\|\hat{K}_{n-1}\hat{V}_{n}\psi\|^2\ge (1-\varepsilon)\|\hat{V}_{n}\psi\|^2=(1-\varepsilon)\|\psi\|^.
\end{equation}
To see this, suppose that to the contrary
$\|\hat{K}_{n-1}\hat{V}_{n}\psi\|^2< (1-\varepsilon)\|\psi\|^2$. Then
\[
(1-\varepsilon)\|\psi\|^2>\|\hat{K}_{n-1}\hat{V}_{n}\psi\|^2=
\|\hat{K}_{n-1}\hat{V}_{n}\hat{K}_{n}\varphi\|^2 \ge (1-\varepsilon)\|\varphi\|^2
\]
and hence $\|\psi\|^2>\|\varphi\|^2$ which contradicts $\|\psi\|^2\le\|\varphi\|^2 $ and thus proves \eqref{Main8}.

By Lemma \ref{Lemma<c}(iv), the inequality in \eqref{Main3} implies that
\begin{equation}\label{4.iva}
\|\mathcal{P}_{n+1}\varphi\|^2\le \bar{\varepsilon}\|\varphi\|^2\ \text{ and }\
\|\mathcal{Q}_{n+1}\varphi\|^2\ge \left(1-\bar{\varepsilon}\right)\|\varphi\|^2,
\end{equation}
where $\bar{\varepsilon}=\frac{\varepsilon}{1-c^2}$. It follows then that
\[
\begin{aligned}
&\|\hat{K}_{n}^0\varphi\|^2=\|\hat{K}_{n}\mathcal{P}_{n+1}\varphi\|^2\le \bar{\varepsilon}\|\varphi\|^2
\ \text{ and }\\
&\|\hat{K}_{n}^1\varphi\|^2=\|\hat{K}_{n}\mathcal{Q}_{n+1}\varphi\|^2= \|\mathcal{Q}_{n+1}\varphi\|^2\ge(1-\bar{\varepsilon})\|\varphi\|^2.
\end{aligned}
\]

Similarly, by Lemma \ref{Lemma<c}(iv), the inequality in \eqref{Main8} implies that
\begin{equation}\label{Main9}
\|\mathcal{Q}_{n}\hat{V}_{n}\psi\|^2\ge (1-\bar{\varepsilon})\|\hat{V}_{n}\psi\|^2
=(1-\bar{\varepsilon})\|\psi\|^2\ge (1-\bar{\varepsilon})(1-\varepsilon)\|\varphi\|^2,
\end{equation}
where the last step is due to \eqref{Main3}. It follows from \eqref{Main9} that
\begin{equation}\label{Main9a}
\|\mathcal{Q}_{n}\hat{V}_{n}\psi\|\ge (1-\tilde{\varepsilon})\|\varphi\|,
\end{equation}
where $\tilde{\varepsilon}=(\bar{\varepsilon}+\varepsilon)/2.$
Next, since $\psi=\hat{K}_{n}^1\varphi+\hat{K}_{n}^0\varphi$,
\begin{equation}\label{Main6}
\|\mathcal{Q}_{n}\hat{V}_{n}\psi\|\le \|\mathcal{Q}_{n}\hat{V}_{n}\hat{K}_{n}^1\varphi\|+
\|\mathcal{Q}_{n}\hat{V}_{n}\hat{K}_{n}^0\varphi\|\le \|\mathcal{Q}_{n}\hat{V}_{n}\hat{K}_{n}^1\varphi\|+
{\bar{\varepsilon}}^{\frac{1}{2}}\|\varphi\|.
\end{equation}
Combining \eqref{Main9a} and \eqref{Main6} we obtain:
\begin{equation}\label{Main7}
\|\mathcal{Q}_{n}\hat{V}_{n}\hat{K}_{n}^1\varphi\|\ge (1-\tilde{\varepsilon}-{\bar{\varepsilon}}^{\frac{1}{2}})\|\varphi\|.
\end{equation}
Set $h=\hat{K}_{n}^1\varphi=\hat{K}_{n}\mathcal{Q}_{n+1}\varphi $.  Note that $h\in \mathcal{L}_{n}$ since
$\mathcal{Q}_{n+1}\varphi\in\mathcal{L}_{n+1}$ and moreover
$\|h\|=\|\mathcal{Q}_{n+1}\varphi\|\ge(1-\bar{\varepsilon})^{\frac{1}{2}} \|\varphi\|$ (see Remark \ref{remark}).
 It follows from \eqref{Main7} that
\begin{equation}\label{Main1}
\|\mathcal{Q}_{n}\hat{V}_{n}h\|\ge (1-\tilde{\varepsilon}-{\bar{\varepsilon}}^{\frac{1}{2}})(1-\bar{\varepsilon})^{-\frac{1}{2}}\|h\|.
\end{equation}
By \eqref{Def:projection1} and \eqref{3.V}
\begin{equation}\label{Main10}
\begin{aligned}
\left(\mathcal{Q}_{n}\hat{V}_{n}h\right)(u)&=\int_X \left(\hat{V}_{n}h\right)(x,u)\mu_n(dx)=
\int_X h(\mathfrak{g}_n(x).u)\|\mathfrak{g}_n(x)u\|^{-\frac m2}\mu_n(dx)\\
&=\int_{\textrm{SL}(m,\mathbb{R)}} h(g.u)\|gu\|^{-\frac m2}\nu_n(dg),
\end{aligned}
\end{equation}
where the last equality is due to the definition of $\nu_n$. Equation \eqref{Main10} shows that the
action of $\mathcal{Q}_{n}\hat{V}_{n}$ on $\mathcal{L}_n$ is isomorphic to the action of $W_{\nu_n}$
on $L_2(\mathcal{S})$ (see \eqref{M10}) and, in particular, $\|\mathcal{Q}_{n}\hat{V}_{n}\|=\|W_{\nu_n}\|$.

Since $\varepsilon$ in \eqref{Main1} can be made arbitrarily small, it follows that $\sup_{n}\|\mathcal{Q}_{n}\hat{V}_{n}\|=1$
and therefore also $\sup_{\nu\in M}\|W_{\nu_n}\|=1$. This, by Lemma \ref{Thm2} (and \ref{LemmaMain}), implies
the existence of a $\nu\in M$ and a $\kappa$ on $\mathcal{S}$ preserved by $G_\nu$.
Theorem \ref{ThmTecn} is proved.
\end{proof}

\section{Proof of Theorem \ref{Thm4}}\label{SecThm4}
Throughout this section, we suppose that the Markov chain $\xi$, the corresponding operators $K_{n}^0$,
and the sequence of functions $(\mathfrak{g}_n)_{n\ge1}$ are those defined in section \ref{SecMR}.

Our plan is as follows. We first prove Lemma \ref{LemmaParticCase} which is a particular case of Theorem \ref{Thm4}.
and then derive Theorem \ref{Thm4} from this Lemma.

As will be seen right now, Lemma \ref{LemmaParticCase}
results from easy analysis of the proof of Theorem \ref{Thm1}.

\begin{lemma}\label{LemmaParticCase} Suppose that:\newline
$\mathrm{(i)}$ the inequalities $\|K_{n}^0\|\le c$, where $ c<1$, are satisfied for all $n\ge 1$;\newline
$\mathrm{(ii)}$  all distributions $\nu_{2j},\ j\ge1$ belong to a weakly compact set $M$ of distributions
satisfying \eqref{MainCond2}(b).

Then there is a (non-random) $\lambda>0$ such that with probability 1
\begin{equation}\label{M220}
\liminf_{j\to\infty}\frac1j \ln\|g_{j}\ldots g_1\|\ge \lambda
\end{equation}
and the estimate \eqref{M220} does not depend on the choice of the subsequence of functions $(\mathfrak{g}_{2j-1})_{j\ge1}.$
\end{lemma}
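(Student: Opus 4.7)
The plan is to re-run the operator-theoretic argument from the proof of Theorem \ref{Thm1}, pairing operators in such a way that the only spectral input required is the estimate of Theorem \ref{ThmTechn} \emph{at even indices}. Three ingredients are combined: (a) a version of Theorem \ref{ThmTechn} localised to the even sub-lattice; (b) the operator representation \eqref{product3}; and (c) the Markov inequality / Borel--Cantelli argument from Step~1 of the proof of Theorem \ref{ThmIndep}.

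First I would establish the uniform estimate
\[
\|\hat{K}_{2j-1}\hat{V}_{\mathfrak{g}_{2j}}\hat{K}_{2j}\hat{V}_{\mathfrak{g}_{2j+1}}\|\le \alpha<1 \qquad \text{for all } j\ge 1.
\]
This is precisely \eqref{MainTechn2} at the index $n=2j$; the only question is whether the proof of Theorem \ref{ThmTechn} survives under the weaker hypothesis (ii). Inspecting that proof (Theorem \ref{ThmTecn}), one sees that Assumption II enters only through the invocation of Lemma \ref{Thm2} applied to the sequence of distributions $\nu_{n_k}$ extracted from a putative counterexample. Restricting the supremum \eqref{MainTecn1} to even indices $n=2j$ simply forces the extracted subsequence to lie in the even sub-lattice, so the relevant $\nu_{n_k}$ belong to $M$ by hypothesis~(ii), and the contradiction with \eqref{MainCond2}(b) is produced verbatim. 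Hypothesis~(i) provides $\|K_n^0\|\le c$ for every $n$, which is the remaining input.

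Next, using \eqref{product3} and submultiplicativity, I would group the operator product as
\[
\hat{V}_{\mathfrak{g}_1}\hat{K}_1\hat{V}_{\mathfrak{g}_2}\hat{K}_2\cdots\hat{K}_{n-1}\hat{V}_{\mathfrak{g}_n}
= \hat{V}_{\mathfrak{g}_1}\prod_{j=1}^{\lfloor (n-1)/2\rfloor}\bigl(\hat{K}_{2j-1}\hat{V}_{\mathfrak{g}_{2j}}\hat{K}_{2j}\hat{V}_{\mathfrak{g}_{2j+1}}\bigr)\cdot R_n,
\]
where $R_n$ is the identity when $n$ is odd and the single trailing factor $\hat{K}_{n-1}\hat{V}_{\mathfrak{g}_n}$ when $n$ is even. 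Since $\hat{V}_{\mathfrak{g}_1}$ is unitary, $\|\hat{K}_{n-1}\|\le 1$, and each inner block has norm at most $\alpha$, this gives
\[
\int_{\mathcal{S}}\mathbb{E}\bigl(\|S_nu\|^{-m/2}\bigr)\,du \le \alpha^{\lfloor (n-1)/2\rfloor},
\]
hence $\mathbb{E}\bigl(\|S_n\|^{-m/2}\bigr)\le A\,e^{-\mathfrak{a}n}$ for some $\mathfrak{a}>0$. The Markov inequality combined with Borel--Cantelli, exactly as in Step~1 of the proof of Theorem \ref{ThmIndep}, then delivers \eqref{M220} for any $\lambda<2\mathfrak{a}/m$.

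The final claim --- that $\lambda$ is insensitive to the choice of the functions $(\mathfrak{g}_{2j-1})_{j\ge 1}$ --- is then essentially automatic: the kernel operators $\hat{K}_n$ depend only on the transition kernels of $\xi$, the operators $\hat{V}_{\mathfrak{g}_{2j-1}}$ are unitary and therefore contribute a factor of $1$ to every norm estimate used above, and the constant $\alpha$ was produced from the data $(c,M)$ alone. The step that I expect to require the most care is (a): one must re-read the proof of Theorem \ref{ThmTecn} and confirm that Assumption II is genuinely only used through the specific index that plays the role of the ``middle'' in the four-fold block. This is a bookkeeping check rather than a genuine obstacle, since the bad index $n_k$ produced in that proof can always be chosen within whichever index set we take the supremum over.
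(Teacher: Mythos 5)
Your proposal is correct and follows essentially the same route as the paper: invoke the proof of Theorem \ref{ThmTecn} with the supremum restricted to the even sublattice (which forces the extracted $\nu_{n_k}$ into $M$, so Lemma \ref{Thm2} applies), and then pair the operator product into blocks $\hat{K}_{2j-1}\hat{V}_{\mathfrak{g}_{2j}}\hat{K}_{2j}\hat{V}_{\mathfrak{g}_{2j+1}}$ whose middle index is always even. Your explicit handling of the even-$n$ case by peeling off a single trailing factor $\hat{K}_{n-1}\hat{V}_{\mathfrak{g}_n}$ (rather than leading factors, as written for Theorem \ref{Thm1}) is exactly the right bookkeeping needed so that every block has even middle index, a detail the paper's ``easy examination'' leaves implicit; the uniformity claim also goes through as you say, since the odd-indexed $\hat{V}_{\mathfrak{g}_{2j\pm 1}}$ appear only as unitary factors absorbed by the equality $\|\hat{K}_{2j-1}\hat{V}_{\mathfrak{g}_{2j}}\hat{K}_{2j}\hat{V}_{\mathfrak{g}_{2j+1}}\|=\|\hat{K}_{2j-1}\hat{V}_{\mathfrak{g}_{2j}}\hat{K}_{2j}\|$.
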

\begin{proof}
An easy examination of the proof of Theorem \ref{Thm1} shows that this proof follows
from the fact that inequalities
\begin{equation}\label{M221}
\|\hat{K}_{2j-1}\hat{V}_{\mathfrak{g}_{2j}}\hat{K}_{2j}\hat{V}_{\mathfrak{g}_{2j+1}}\|
=\|\hat{K}_{2j-1}\hat{V}_{\mathfrak{g}_{2j}}\hat{K}_{2j}\|\le\alpha<1
\end{equation}
hold for all $j\ge1$.

Theorem \ref{ThmTechn}  states that \eqref{MainTechn2a} holds, with $n=2j$
if $\|\hat{K}_n^0\|\le c<1$, $\|\hat{K}_{n+1}^0\|\le c<1$, and
the group $G_{\nu_n}$ does not preserve any probability measure on $\mathcal{S}$.
The conditions of our Corollary thus imply that \eqref{M220} follows from Theorem \ref{Thm1}.

The uniformity of \eqref{M220} with respect to the choice of the subsequence of
functions $(\mathfrak{g}_{2j-1})_{j\ge1}$ from the fact that the matrices $\mathfrak{g}_{2j-1}(\xi_{2j-1})$
have no impact on the inequalities \eqref{product4} defining the value of $\lambda$. \end{proof}

Let us make one final observation concerning the proof of Theorem \ref{Thm1}: it is not important for this proof
that the phase space $X$ of the chain $\xi$ does not depend on $n$.
Indeed, the definition $\xi=(\xi_j\in X^{(j)})_{n\ge1}$, where $X^{(j)}$ is a sequence of phase spaces,
is equivalent to the original definition with $X =\cup_{j=1}^\infty X^{(j)}$ - the union
of $ X^{(j)}$'s (which are considered as disjoint sets).
Formally speaking, we also have to extend to $X$ the definitions of functions $\mathfrak{g}_j:X^{(j)}\mapsto\SL$.
This can be done e.g. by setting $\mathfrak{g}_j(y)=g\in\SL$ for all $y\not\in X^{(j)}$; the choice of the $g$
plays no role since the chain $\xi$ at time $j$ can take values only in $X^{(j)}$.

We thus shall assume that Theorem \ref{Thm1} and thus also Lemma \ref{LemmaParticCase} work for chains
with phase spaces which depend on time.

\begin{proof}[Proof of Theorem \ref{Thm4}] Set $n_0=l_0=0$ and define a sequence of intervals $[a_j,b_j],\ j\ge1,$ by
\[
\begin{aligned}
& a_{j}=n_{j-1}+l_{j-1}+1, & b_{j}=n_{j+1}, & \text{ if $j$ is odd,}\\
& a_{j}=n_{j}+1, & b_{j}=n_{j}+l_{j} & \text{ if $j$ is even}.
\end{aligned}
\]
Define a new Markov chain $\eta=(\eta_j)_{j\ge1}$ by setting $\eta_j=(\xi_{a_j}, \xi_{a_j+1},...,\xi_{b_j})$
with phase spaces $X^{(j)}=\overset{b_j-a_j+1\ \mathrm{times}}{\overbrace{X\times...\times X}}$ depending on $j$.

Define also a new sequence of functions $\bar{\mathfrak{g}}_j:X^{(j)}\mapsto\SL$ by
\[
\bar{\mathfrak{g}}_j (\eta_j)=\mathfrak{g}_{b_j}(\xi_{b_j})...\, \mathfrak{g}_{a_j}(\xi_{a_j}).
\]
Let $\nu_j^\eta$ be the distribution of $\bar{\mathfrak{g}}_j (\eta_j)$. Note that condition (ii) of Theorem \ref{Thm4}
and the definition of $[a_j,b_j]$ imply that if $j$ is even then $\nu_j^\eta=\nu_{n_j+1,l_j}$  and hence condition (ii)
of Lemma \ref{LemmaParticCase} is satisfied.

We shall now check that condition (i) of Lemma \ref{LemmaParticCase} is also satisfied.

Denote by $\tilde{H}_{j}$ the Hilbert space of real-valued functions square integrable with respect
to the measure on $X^{(j)}$ corresponding to the chain $\eta$. This measure is given by
\[
\mathbb{P}(\eta_{j}\in dy_1\times dy_2\times...\times dy_{r_j})=\mu_{a_j}(dy_1)k_{a_j}(y_1,dy_2)...\, k_{b_j-1}(y_{r_{j}-1},dy_{r_j}),
\]
where $r_j=b_j-a_j+1$ (this notation will be used throughout this proof).

Let $\tilde{K}_{j}:\tilde{H}_{j+1}\mapsto \tilde{H}_{j},\ j\ge 1,$
be the transition operators of the chain $\eta$. Below, we use the notation $\mathbf{x}=(x_1,...,x_{r_{j}})$ and
$\mathbf{y}=(y_1,...,y_{r_{j+1}})$ for elements of $\tilde{H}_{j}$ and $\tilde{H}_{j+1}$ respectively.

Denote the kernel of $\tilde{K}_{j}$ by $\tilde{k}_{j}(\mathbf{x},d\mathbf{y})$. Obviously,
\[
\tilde{k}_{j}(\mathbf{x},d\mathbf{y})=k_{b_j}(x_{r_j},dy_1)k_{a_{j+1}}(y_{1},dy_2)\ldots\, k_{{r_j}-1}(y_{r_{j+1}-1},dy_{r_{j+1}}).
\]
It is important that $\tilde{k}_{a_j}(\mathbf{x},d\mathbf{y})$ depends only on $x_{r_j}$ (and not other components of $\mathbf{x}$).

Denote by $\tilde{K}_{j}^0$ the restriction of $\tilde{K}_{j}$ to the subspace $\tilde{H}_{j}^0$ of functions
from $\tilde{H}_{j}$ with zero mean.
\begin{lemma}\label{A2} Suppose that (as in Theorem \ref{Thm4})
the inequalities $\|K_{n_j}^0\|\le c$, $\|K_{n_j+l_j}^0\|\le c$, where $ c<1$, hold for all $j\ge 1$.
Then $\|\tilde{K}_{j}^0\|\le c$ for all $j\ge1$.
\end{lemma}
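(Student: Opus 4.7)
The plan is to show that for any $f \in \tilde{H}_{j+1}^0$, the image $\tilde{K}_j f$ can be written in terms of a single application of the original transition operator $K_{b_j}$ to a zero-mean function, and then invoke the controlled bound $\|K_{b_j}^0\| \le c$. The key observation driving the proof is that in both parity cases the index $b_j$ coincides with one of the ``control points'' at which $\|K_n^0\| \le c$ is assumed: when $j$ is odd, $b_j = n_{j+1}$, and when $j$ is even, $b_j = n_j + l_j$. The remaining transition kernels $k_{a_{j+1}}, \ldots, k_{b_{j+1}-1}$ that appear inside $\tilde{K}_j$ will be absorbed harmlessly by the tower property.

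Concretely, for $f \in \tilde{H}_{j+1}$, define
\[
h(y_1) = \int k_{a_{j+1}}(y_1, dy_2) \cdots k_{b_{j+1}-1}(y_{r_{j+1}-1}, dy_{r_{j+1}}) f(y_1, \ldots, y_{r_{j+1}}),
\]
which is the conditional expectation of $f(\eta_{j+1})$ given $\xi_{a_{j+1}} = y_1$. Since $\tilde{k}_j(\mathbf{x}, d\mathbf{y})$ depends on $\mathbf{x}$ only through $x_{r_j}$ and its first factor is $k_{b_j}(x_{r_j}, dy_1)$, one obtains
\[
(\tilde{K}_j f)(\mathbf{x}) = (K_{b_j} h)(x_{r_j}).
\]
Taking squared norms and using that the marginal of $\tilde{\mu}_j$ in the last coordinate is $\mu_{b_j}$ gives $\|\tilde{K}_j f\|_{\tilde{H}_j}^2 = \|K_{b_j} h\|_{H_{b_j}}^2$, while Jensen's inequality for the conditional expectation defining $h$ yields $\|h\|_{H_{a_{j+1}}}^2 \le \|f\|_{\tilde{H}_{j+1}}^2$.

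If moreover $f \in \tilde{H}_{j+1}^0$, then
\[
\int_X h(y_1) \mu_{a_{j+1}}(dy_1) = \mathbb{E}[f(\eta_{j+1})] = 0,
\]
so $h \in H_{a_{j+1}}^0$. Since $a_{j+1} = b_j + 1$, the hypothesis gives $\|K_{b_j}^0\| \le c$, hence $\|K_{b_j} h\|_{H_{b_j}} \le c \|h\|_{H_{a_{j+1}}}$. Chaining the three estimates produces
\[
\|\tilde{K}_j f\|_{\tilde{H}_j} \le c \|f\|_{\tilde{H}_{j+1}},
\]
which is the claim.

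The only step that requires genuine care is the identification of $b_j$ with a control point; this is a bookkeeping check on the parity-dependent definitions of $[a_j, b_j]$. The remaining inputs---the tower property, Jensen's inequality, and the fact that $\tilde{k}_j(\mathbf{x},\cdot)$ depends only on $x_{r_j}$---are standard and uniform in $j$, so no approximation or compactness argument is needed.
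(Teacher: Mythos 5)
Your proof is correct and follows essentially the same route as the paper: your $h$ is exactly the paper's conditional expectation $\varphi$, the identity $(\tilde{K}_j f)(\mathbf{x})=(K_{b_j}h)(x_{r_j})$ is the paper's key reduction, and your Jensen step plays the role of the paper's Cauchy--Schwarz bound $\|\varphi\|_{H_{a_{j+1}}}\le\|f\|_{\tilde{H}_{j+1}}$. The parity bookkeeping identifying $b_j$ with a control point ($b_j=n_{j+1}$ for odd $j$, $b_j=n_j+l_j$ for even $j$) also matches the paper's observation that $\|K_{b_j}^0\|\le c$ for all $j$.
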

\begin{proof} Throughout this proof $j$ and $r_{j+1}$ are fixed and so we write $r$ for $r_{j+1}$.

Let $f\in H_{j+1}^0$, that is
\begin{equation}\label{A3}
\int_{X^{(j+1)}} \mu_{a_{j+1}}(dy_1)k_{a_{j+1}}(y_{1},dy_2)\ldots\, k_{b_{j+1}-1}(y_{r-1},dy_{r})f(\mathbf{y})
=\int_{X} \mu_{a_{j+1}}(dy_1)\varphi(y_1)=0,
\end{equation}
where
\begin{equation}\label{Eq6.2}
\varphi(y_1)=\int_{y_2\in X,...,\,y_{r}\in X} k_{a_{j+1}}(y_{1},dy_2)\ldots\,
k_{b_{j+1}-1}(y_{r-1},dy_{r})f(y_1,y_2,..., y_{r}).
\end{equation}
By the definition of the action of $\tilde{K}_{j}$,
\begin{equation}\label{Eq6.1}
\begin{aligned}
(\tilde{K}_{j}^0f)(\mathbf{x})&=
\int_{X^{(j+1)}}k_{b_{j}}(x_{r_j},dy_1)k_{a_{j+1}}(y_1,dy_2)...\,k_{b_{j+1}-1}(y_{r-1},dy_{r})
 f(\mathbf{y})\\
 &=\int_{X}k_{b_{j}}(x_{r_j},dy_1)\varphi(y_1).
 \end{aligned}
\end{equation}
We have to show that
\begin{equation}\label{A4}
\|\tilde{K}_{j}^0f\|_{\tilde{H}_j}
\le c^2\|f\|_{\tilde{H}_{j+1}}.
\end{equation}
It follows from \eqref{Eq6.1} that
\[
\|\tilde{K}_{j}^0f\|_{\tilde{H}_{j}}^2=\int_X \left(\int_X k_{b_j}(x,dy)\varphi(y)\right)^2\mu_{b_j}(dx).
\]
On the other hand \eqref{A3} means that $\varphi\in H_{a_{j+1}}^0$ and the right hand side of \eqref{Eq6.1} coincides with
$K_{b_j}^0\varphi\in H_{b_j}^0$ and
\[
\|K_{b_j}^0\varphi\|_{H_{b_j}}^2=\int_X \left(\int_X k_{b_j}(x,dy)\varphi(y)\right)^2\mu_{b_j}(dx).
\]
Note also that $b_j$'s are defined so that  $\|K_{b_j}\|\le c$ for all $j\ge1$ by the condition of the Lemma. Therefore
\begin{equation}\label{A5}
\|\tilde{K}_{j}^0f\|_{\tilde{H}_{j}}^2=\|K_{b_j}^0\varphi\|_{H_{b_j}}^2\le c^2\|\varphi\|_{H_{b_j+1}}^2=
c^2\int_X \mu_{a_{j+1}}(dx)\varphi(x)^2,
\end{equation}
(where the last equality is due to $b_j+1=a_{j+1}$).
It follows from \eqref{Eq6.2} that, by the Cauchy-Schwartz inequality,
\[
\varphi(y_1)^2\le \int_{y_2\in X,...,\,y_{r}\in X} k_{a_{j+1}}(y_1,dy_2)\ldots\,
k_{b_{j+1}-1}(y_{r-1},dy_{r})[f(y_1,y_2,..., y_{r})]^2.
\]
The integral in the right hand side of \eqref{A5} is now estimated by
\[
\begin{aligned}
&\int_X \mu_{a_{j+1}}(dx)\varphi(x)^2 \\
&\le\int_X \mu_{a_{j+1}}(dy_1)
\int_{y_2\in X,...,\,y_{r}\in X} k_{a_{j+1}}(y_1,dy_2)\ldots\,
k_{b_{j+1}-1}(y_{r-1},dy_{r})[f(y_1,y_2,..., y_{r})]^2\\
&=\int_{ X^{(j+1)}} \mu_{a_{j+1}}(dy_1)k_{a_{j+1}}(y_1,dy_2)\ldots\,
k_{b_{j+1}-1}(y_{r-1},dy_{r})[f(y_1,y_2,..., y_{r})]^2=\|f\|_{\tilde{H}_{j+1}}^2.
\end{aligned}
\]
The Lemma is proved.
\end{proof}
We thus have shown that if the assumptions of Theorem \ref{Thm4} are satisfied then
the chain $\eta$ satisfies all assumptions of Lemma \ref{LemmaParticCase} and hence
there is a non-random $\lambda>0$ such that with probability 1
\begin{equation}\label{Eq}
\liminf_{j\to\infty}\frac1j \ln\|\bar{\mathfrak{g}}_{j}(\eta_j)\ldots \bar{\mathfrak{g}}_{1}(\eta_1)\|\ge \lambda.
\end{equation}
Since $\bar{\mathfrak{g}}_{j}(\eta_j)\ldots \bar{\mathfrak{g}}_{1}(\eta_1)=g_{n_j+l_j}\ldots g_1$, where the product
in the right side of this equality is the same as in \eqref{M22}, Theorem \ref{Thm4} is proved.
\end{proof}

\section{Appendix}\label{appendix}
The statements proved in this Appendix are simple and neither is new. We prove them in
order to make this paper more self-contained and also because in some cases it is
easier to prove the statement than to find the corresponding reference.

\subsection{Proof of \eqref{Def:projection}}\label{sec5.1}

First,we have to show that $\mathcal{P}_n$ defined by the rhs of \eqref{Def:projection} maps
$\mathbb{H}_n$ into $\mathbb{H}_n^0$. Let $h\in \mathcal{L}_{n}$. Then
\begin{align*}
\left<\mathcal{P}_nf,h\right>_{\mathbb{H}_n}&=\int_{X\times\mathcal{S}}f(x,u)h(u)\mu_n(dx)du-
\int_{X\times\mathcal{S}}\left(\int_{X}f(y,u)\mu_n(dy)\right)h(u)\mu_n(dx)du\\
&=\int_{X\times\mathcal{S}}f(x,u)h(u)\mu_n(dx)du-
\int_{\mathcal{S}}\left(\int_{X}f(y,u)\mu_n(dy)\right)h(u)du =0.
\end{align*}
It remains to check that if $f\in \mathbb{H}_n^0$ then the integral in the rhs of \eqref{Def:projection}
is vanishing. This follows from the following lemma.
\begin{lemma}\label{A1} If $f\in \mathbb{H}_n^0$ then
$a(u)\equiv \int_X f(x,u)\mu_n(dx)=0$ for Lebesgue - a.e. $u\in\mathcal{S}$.
\end{lemma}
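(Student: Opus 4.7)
The plan is to show that the function $a(u)=\int_X f(x,u)\mu_n(dx)$ is a well-defined element of $L_2(\mathcal{S})$, then to test orthogonality of $f$ against the subspace $\mathcal{L}_n$ using $a$ itself.

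First, since $\mu_n\times du$ is a probability measure on $X\times\mathcal{S}$, we have $\mathbb{H}_n=L_2(\mu_n\times du)\subset L_1(\mu_n\times du)$. By Fubini--Tonelli, $\int_X|f(x,u)|\mu_n(dx)<\infty$ for Lebesgue-a.e.\ $u\in\mathcal{S}$, so $a(u)$ is defined a.e. Applying Cauchy--Schwarz inside the $X$-integral,
\[
|a(u)|^2\le \int_X f(x,u)^2\mu_n(dx),
\]
and integrating over $\mathcal{S}$ yields $\|a\|_{L_2(\mathcal{S})}^2\le\|f\|_{\mathbb{H}_n}^2<\infty$, so $a\in L_2(\mathcal{S})$.

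Next, view $a$ as an element of $\mathcal{L}_n\subset \mathbb{H}_n$ (i.e., the function $(x,u)\mapsto a(u)$). For any $h\in\mathcal{L}_n$, regarded as $h(u)$, Fubini gives
\[
\langle f,h\rangle_{\mathbb{H}_n}=\int_{X\times\mathcal{S}}f(x,u)h(u)\,\mu_n(dx)du=\int_{\mathcal{S}}a(u)h(u)\,du.
\]
The hypothesis $f\in\mathbb{H}_n^0$ means the left side vanishes for every $h\in\mathcal{L}_n$. Choosing $h=a$ (which is legitimate because $a\in L_2(\mathcal{S})$ and hence corresponds to an element of $\mathcal{L}_n$) produces $\int_\mathcal{S}a(u)^2\,du=0$, so $a(u)=0$ for Lebesgue-a.e.\ $u\in\mathcal{S}$.

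There is no real obstacle here: the only point that needs care is confirming that $a$ itself belongs to $L_2(\mathcal{S})$ so that it can serve as a valid test function from $\mathcal{L}_n$; this is handled by the Cauchy--Schwarz bound above. Everything else is a direct application of Fubini and the definition of $\mathbb{H}_n^0$ as the orthogonal complement of $\mathcal{L}_n$.
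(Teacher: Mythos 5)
Your proof is correct and follows essentially the same route as the paper: establish $a\in L_2(\mathcal{S})$ (so $a$ can be viewed as an element of $\mathcal{L}_n$) via Cauchy--Schwarz, then test $f\perp\mathcal{L}_n$ against $h=a$ to get $\int_{\mathcal{S}}a(u)^2\,du=0$. Your added remarks on Fubini--Tonelli and a.e.\ integrability are just a more explicit version of what the paper leaves implicit.
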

\begin{proof}
Note first that $a\in \mathcal{L}_{n}$ because $\int_\mathcal{S}a(u)^2du\le
\int_\mathcal{S}\left( \int_X f(x,u)^2\mu_n(dx) \right)du= \|f\|_{\mathbb{H}_n}^2$.
By the definition of $\mathbb{H}_{n}^0$, $\int_{X\times\mathcal{S}}f(x,u)a(u)\mu_{n+1}(dx)du=0$.
On the other hand,
\[
\int_{X\times\mathcal{S}}f(x,u)a(u)\mu_{n+1}(dx)du=
\int_{\mathcal{S}}\left(\int_X f(x,u)\mu_{n+1}(dx)\right)a(u)du=\int_{\mathcal{S}}a(u)^2du.
\]
The Lemma is proved. \end{proof}

\subsection{Proof of the equivalence of \eqref{M21} and \eqref{M13}} \label{proofM13}

We have to check that for any $g\in \textrm{SL}(m,\mathbb{R})$
\begin{equation}\label{A13}
\int_{\mathcal{S}}\psi(u)(g^{-1}\kappa)(du)=\int_{\mathcal{S}}\psi(g^{-1}.u)\kappa(du).
\end{equation}
Obviously, it suffices to check \eqref{A13} for characteristic functions of Borel subsets of $\mathcal{S}$.
Let $A\subset \mathcal{S}$ be such a subset and $\chi_A$ be its characteristic function. Then
\[
\int_{\mathcal{S}}\chi_A(u)(g^{-1}\kappa)(du)=(g^{-1}\kappa)(A)=\kappa(g.A).
\]
Since $\chi_A(g^{-1}.u)=\chi_{g.A}(u)$, we have
\[
\int_{\mathcal{S}}\chi_A(g^{-1}.u)\kappa(du)=\int_{\mathcal{S}}\chi_{g.A}(u)\kappa(du)=\kappa(g.A)
\]
and this proves \eqref{A13}.

\end{document}